\newtheorem{definition}{Definition}[section]
\newtheorem{them}{Theorem}[section]
\newtheorem{prop}{Proposition}[section]
\newtheorem{lem}{Lemma}[section]
\newtheorem{cor}{Corollary}[section]
\numberwithin{equation}{section}
\begin{document}

\title{
A direct energy estimates for effectively hyperbolic operators}

\author{Tatsuo Nishitani\footnote{Department of Mathematics, Osaka University:  
nishitani@math.sci.osaka-u.ac.jp
}}

\date{}
\maketitle

\def\ep{\epsilon}
\def\dif{\partial}
\def\al{\alpha}
\def\be{\beta}
\def\ga{\gamma}
\def\om{\omega}
\def\lam{\lambda}
\def\varep{\varepsilon}
\def\R{{\mathbb R}}
\def\N{{\mathbb N}}
\def\C{{\mathbb C}}
\def\Q{{\mathbb Q}}
\def\Ga{\Gamma}
\def\La{\Lambda}
\def\lr#1{\langle{#1}\rangle_{\gamma }}
\def\mD{\lr{ D}_{\mu}}
\def\xim{\lr{\xi}_{\mu}}
\def\co{{\mathcal C}}
\def\op#1{{\rm op}({#1})}
\def\olr#1{\langle{#1}\rangle}
\def\bg{{\bar g}}

\begin{abstract}
This paper is devoted to a simpler derivation of energy estimates, compared to previously existing ones, for effectively hyperbolic operators. One of main points is no use of general Fourier integral operators and another point is an efficient use of the Weyl calculus of pseudodifferential operators associated with several different metrics. 
\end{abstract}

\section{Introduction}

Consider
\begin{equation}
\label{eq:moto:op}
P=-D_t^2+A_2(t, x, D)+A_0(t, x, D)D_t+A_1(t, x, D)
\end{equation}
where $A_j(t, x, D)$ are classical pseudodifferential operators of order $j$ on $\R^d$ depending smoothly on $t$. Denote  the  principal symbol of $P$ by
\[
p(t, x, \tau, \xi )=-\tau^2+a(t, x, \xi )
\]
where $a(t, x, \xi)$ is positively homogeneous of degree $2$ in $\xi$ which is assumed to be  nonnegative for any  $(t, x, \xi)\in U\times \R^d$ with some neighborhood $U$ of $(0, 0)\in \R^{d+1}$, a necessary condition for the Cauchy problem for $P$ to be $C^{\infty}$ well-posed near the origin.

In \cite{IP}, Ivrii and Petkov proved that if the Cauchy problem for $P$  is $C^{\infty}$ well-posed for any lower order term then the Hamilton map $F_p$ has a pair of  non-zero real eigenvalues at every singular point of $p=0$  (\cite[Theorem 3]{IP}). A singular point of $p=0$ is called {\it effectively hyperbolic} (\cite{Ho1})   if the Hamilton map has a pair of non-zero real eigenvalues there. In \cite{I}, Ivrii has proved that if  every singular point is effectively hyperbolic, and  $p$ admits a factorization $p=q_1q_2$ nearby with real smooth symbols  $q_i$, then the Cauchy problem  is $C^{\infty}$ well-posed for every lower order term, reducing $P$ to another with controllable lower order terms, by operator powers of operator.

If a singular point $(t, x, \tau, \xi)$  is effectively hyperbolic then $\tau$ is a characteristic root of multiplicity at most $3$ (\cite[Lemma 8.1]{IP}) and every multiple characteristic root is at most double, the conjecture has been proved in \cite{Iwa1, Iwa2}, \cite{Ni2}. In \cite{Iwa1, Iwa2} the idea of the proof is to reduce $P$ to a perturbation of that treated in \cite{I}  by operator powers of operator of which symbol is found applying the Nash-Moser implicit function theorem. On the other hand in \cite{Ni2} (see also \cite{Ni:book}) the proof is based on weighted energy estimates with pseudodifferential weights of which symbol stems from a geometric characterization of effectively hyperbolic singular points, after some preliminary transformations by Fourier integral operators. For the Cauchy problem for operators with triple effectively hyperbolic characteristics, where  $p$ cannot be smoothly factorized, see \cite{Ni:Ivconj} and the references given there. 

In this paper, though  we follow \cite{Ni:book} mainly, we derive energy estimates using only  changes of local coordinates $x$ and  the Weyl calculus of pseudodifferential operators, which makes much simpler the arguments to derive local existence of solution to the Cauchy problem (Theorem \ref{thm:sonzai:ippan} below) from microlocal energy estimates. On the other hand, in \cite{Ni:Pisa} we gave another way to obtain microlocal energy estimates without use of Fourier integral operators where, in spite of $C^{\infty}$ problem, we need a calculus of Gevrey pseudodifferential operators in the $(t, x)$-space and a technical verification of  support of solutions.

In Section \ref{sec:direct:ene} we derive (microlocal) weighted energy estimates  and prove local existence result for the Cauchy problem. In Section \ref{sec:jyunbi} several lemmas and propositions required in Section \ref{sec:direct:ene} are stated without proofs, of which proofs are given in Sections \ref{sec:proof:lemma}. In the last section \ref{sec:furoku} we give a proof  of Proposition \ref{pro:p:to:psi} below  for the sake of completeness. 

%%%%%%%%%%%%%%%%%%%%%%%%%%%%%%%%%
\section{Preparations for direct energy estimates}
\label{sec:jyunbi}

%%%%%%%%%%%%%%%
%%
First recall \cite[Lemmas 3.1, 3.2]{Ni:katata} (see also \cite[Section 2.1]{Ni:book}).
\begin{prop}
\label{pro:p:to:psi}Assume that $(0, 0, 0, {\bar\xi})$ is effectively hyperbolic. One can choose a local coordinates $x$ with ${\bar\xi}=e_d$ and smooth function $\psi(x, \xi)$ such that either $d\psi=d\xi_1$ or $d\psi=\varep dx_1+cdx_d$ at $(0, e_d)$ where $c\,\in\R$, $\varep=0$ or $1$, and smooth $\ell(t, x, \xi)$, $q(t, x, \xi)\geq 0$ vanishing at $( 0, e_d)$, positively homogeneous in $\xi$ of degree $1$, $2$ respectively   
such that
\[
p(t, x, \tau, \xi)=-\tau^2+\ell^2(t, x, \xi)+q(t, x, \xi),\quad q(t, x, \xi)\geq c(t-\psi)^2|\xi|^2
\]
with some $c>0$ on a conic neighborhood of $(0, e_d)$ where
\begin{equation}
\label{eq:kap:okisa}
|\{\ell, \psi\}(0, e_d)|<1,\quad \{\psi, \{\psi, q\}\}(0, e_d)=0.
\end{equation}
\end{prop}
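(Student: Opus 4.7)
The plan is to exploit the symplectic eigenstructure of the Hamilton map $F_p$ at the doubly characteristic point $\rho_0=(0,0,0,\bar\xi)$. By effective hyperbolicity $F_p$ carries a pair of real nonzero eigenvalues $\pm\mu_0$, whose generalized eigenspace is an invariant symplectic $2$-plane $V\subset T_{\rho_0}T^*\R^{d+1}$ on which $F_p$ acts hyperbolically. Since $p=-\tau^2+a$, this plane projects nontrivially onto the $(x,\xi)$ tangent space, picking out a preferred direction in which to normalize coordinates.

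First I would perform a linear change of local coordinates in $x$ so that $\bar\xi=e_d$ and so that the image of $V$ in $T_{(0,e_d)}T^*\R^d$ takes one of two standard positions relative to the radial direction $\xi_d\,\partial_{\xi_d}$. The dichotomy in the statement arises naturally here: when the projected hyperbolic direction is transverse to the radial line, one may take $\xi_1$ as parameter and obtain $d\psi=d\xi_1$; otherwise the projection mixes with the radial direction and a combination $\varepsilon\,dx_1+c\,dx_d$ is required.

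Second I would extract the factorization $a=\ell^2+q$. Let $\ell$ be the unique smooth function, $1$-homogeneous in $\xi$ and vanishing at $(0,e_d)$, whose Hamilton vector field at $\rho_0$ is parallel to the chosen real eigenvector of $F_a$; set $q=a-\ell^2$. Then $q\geq 0$ in a conic neighborhood because $\ell^2$ absorbs exactly the definite rank-one part of the Hessian of $a$ in the hyperbolic direction, leaving a nonnegative quadratic form whose null manifold meets each slice $\{t=\text{const}\}$ transversally. Representing this null manifold as a graph $t=\psi(x,\xi)$ by the implicit function theorem produces a smooth $\psi$ with the required lower bound $q\geq c(t-\psi)^2|\xi|^2$.

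The two identities in \eqref{eq:kap:okisa} now fall out of the construction. The vanishing $\{\psi,\{\psi,q\}\}(0,e_d)=0$ expresses that $\psi$ parametrizes the minimum locus of $q$ along the $\psi$-Hamilton flow, so the second derivative of $q$ in the $\psi$-direction vanishes at $\rho_0$. The strict bound $|\{\ell,\psi\}(0,e_d)|<1$ records the fact that the real eigenvalue $\mu_0$ of $F_p$ is strictly smaller than a normalization constant encoded in the scaling of $\ell$, and can be enforced by a final rescaling of $\ell$. The hardest step is the second alternative of the dichotomy, where the hyperbolic direction has a radial component and one must construct a linear symplectic change of $(x,\xi)$ that preserves $\bar\xi=e_d$ while realizing the prescribed form of $d\psi$; this delicate normal-form reduction is the substance of Lemmas 3.1--3.2 of \cite{Ni:katata}, revisited in Section~\ref{sec:furoku}.
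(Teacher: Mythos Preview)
Your sketch contains two concrete gaps that the paper's argument handles differently.

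First, the decomposition. You propose to pick $\ell$ so that $H_\ell(\rho_0)$ is parallel to a real eigenvector and then set $q=a-\ell^2$, asserting $q\geq 0$ because the Hessian of $q$ at $\rho_0$ is positive semidefinite. But a nonnegative Hessian at one point does not force nonnegativity of the function on a neighborhood; the condition on $d\ell(\rho_0)$ alone leaves $\ell$ underdetermined, and for a generic extension $a-\ell^2$ changes sign. The paper avoids this by first producing $\psi$ (from a linear combination $f=\sum\al_j\phi_j/|\xi|$ with $H_f(\rho)\in\Gamma\cap\Lambda^\sigma\cap\olr{\theta}^\sigma$, written $f=e(t-\psi)$), normalizing coordinates so that $d\psi=d\xi_1$ or $d\psi=dx_1+c\,dx_d$, and only \emph{then} splitting $a$ via the Malgrange preparation theorem in the variable $x_1$ (resp.\ $\xi_1$): $a=e\big((x_1-h)^2+g\big)$ with $g\geq 0$ independent of $x_1$. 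Setting $\ell=e^{1/2}(x_1-h)$ and $q=eg$ makes $q\geq 0$ automatic and gives $\{\psi,\{\psi,q\}\}(\rho)=0$ for free, since $q$ is independent of $x_1$ and $\{\psi,\cdot\}=\dif_{x_1}$ at $\rho$.

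Second, the bound $|\{\ell,\psi\}(\rho)|<1$ cannot be ``enforced by a final rescaling of $\ell$'': replacing $\ell$ by $c\ell$ destroys the identity $a=\ell^2+q$. In the paper this inequality is a genuine consequence of effective hyperbolicity: from $-H_{t-\psi}(\rho)\in\Gamma$ and \eqref{eq:hypsui} one gets $\sum_j\{\phi_j,\psi\}^2(\rho)<1$, hence $|\{\psi,\{\psi,a\}\}(\rho)|<2$; since $\{\psi,\{\psi,q\}\}(\rho)=0$ by construction, this reduces to $\{\ell,\psi\}^2(\rho)<1$. No rescaling is available or needed.
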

Note that the change of coordinates  can be extended to a diffeomorphism on $\R^d$ which is a linear transformation outside a neighborhood of $x=0$. 
According to $d\psi=d\xi_1$ or $d\psi=\varep x_1+cx_d$ at $(0, e_d)$ one can write
\begin{equation}
\label{eq:psi:gutai}
\psi(x, \xi)=\xi_1/|\xi|+r(x, \xi),\quad
\psi(x, \xi)=\varep x_1+cx_d+r(x, \xi)
\end{equation}
where $dr(0, e_d)=0$. Note that $\{\psi, \{\psi, q\}\}(0, e_d)=0$ implies that
\begin{equation}
\label{eq:kakkosiki}
\dif_{x_1}^2q(0, e_d)=0\;\;\text{if}\;\;d\psi=d\xi_1,\quad\dif_{\xi_1}^2q(0, e_d)=0\;\;\text{if}\;\;d\psi=\varep dx_1+cdx_d.
\end{equation}
We call $(a)$ the coordinates change which leads to $d\psi=d\xi_1$ and call $(b)$ which leads to $d\psi=\varep dx_1+cdx_d $. 

%%%%%%%%%%%%%%%%%%%%%%%%%%%%%%%
\subsection{Localization of symbols}
\label{sec:kakutyo}

After making a change of coordinates in Proposition \ref{pro:p:to:psi} we localize such obtained symbol (operator) to a neighborhood of $(0, e_d)$. 
We first localize coordinates functions. Let  $\chi(s)\in C^{\infty}(\R)$ be equal to $s$ on $|s|\leq 1$, $|\chi(s)|$ is constant for $|s|\geq 2$ and $0\leq d\chi(s)/ds=\chi^{(1)}(s)\leq 1$ everywhere. Define $y(x)=(y_1(x),\ldots, y_d(x))$ and ${\eta}(\xi)=(\eta_1(\xi),\ldots, \eta_d(\xi))$ by
\begin{align*}
y_j(x)=M^{-1}\chi(M x_j),\;\;{\eta}_j(\xi)=M^{-1}\chi(M(\xi_j\lr{\xi}^{-1}-\delta_{jd}))
\end{align*}
for $j=1,2,\ldots, d$ with $\lr{\xi}=(\gamma^2+|\xi|^2)^{1/2}$ 
 where $\delta_{i j}$ is the Kronecker's delta and  $M$, $\gamma$ are large positive parameters constrained 
\begin{equation}
\label{eq:seigen}
\gamma\geq M^{4}.
\end{equation}
It is easy to see that  $(1-CM^{-1})\lr{\xi}\leq |(\eta+e_d)\lr{\xi}|\leq (1+CM^{-1})\lr{\xi}$ and
\begin{equation}
\label{eq:y:atai}
|y(x)|\leq CM^{-1},\quad |\eta(\xi)|\leq CM^{-1},\quad (x,\xi)\in\R^d\times\R^d
\end{equation}
with some $C>0$ so that $(y(x), \eta(\xi)+e_d)$ is contained in a neighborhood of $(0, e_d)$, shrinking with $M$. Note that $(y, (\eta +e_d)\lr{\xi})=( x,\xi)$  on the conic neighborhood $W_{M,\gamma}$ of $(0, e_d)$;
\begin{equation}
\label{eq:conic:nbd} 
W_{M, \gamma}=\big\{(x,\xi)\mid |x|\leq M^{-1},\; |\xi/|\xi|-e_d|\leq M^{-1}/2,\; |\xi|\geq \gamma M^{1/2} \big\}
\end{equation}
because  if $(x,\xi)\in W_{M, \gamma}$ then
\begin{align*}
\big|\xi/\lr{\xi}-e_d\big|\leq \big|\xi/\lr{\xi}-\xi/|\xi|\big|+\big|\xi/|\xi|-e_d\big|
\leq M^{-1}/2\\+\big(\big|\lr{\xi}-|\xi|\big|\big)/\lr{\xi}
\leq M^{-1}/2+\gamma^2\lr{\xi}^{-1}(\lr{\xi}+|\xi|)^{-1}\leq M^{-1}.
\end{align*}
In what follows we assume that the range of $t$ is constrained such that 
\begin{equation}
\label{eq:t:seigen}
|t|<T_0M^{-1}=\delta
\end{equation}
with some fixed $T_0>0$. 

Let $f(x, \xi)\in S^l_{1, 0}(W)$ where $W$ is a conic neighborhood of $(0, e_d)$. We define the localization of $f$ by $f_M(x, \xi)=f(y(x), (\eta(\xi)+e_d)\lr{\xi})$   which  depends also on $\gamma$ and coincides with the original $f$ in $W_{M,\gamma}$ if $M$ is large. Denote the coordinates change in Proposition \ref{pro:p:to:psi}, extended to $\R^d$, by $x\mapsto \kappa(x)$ and $(Tu)(t, x)=u(t, \kappa(x))$ then the localized symbol of $T^{-1}PT$ is written as
\[
-\tau^2+\ell_M^2(t, x,\xi)+q_M(t, x, \xi)+b_1(t, x,\xi)+b_0(t, x,\xi)\tau
\]
which we denote by ${\hat P}$ from now on.  
All symbols (operators) with which we work in this paper are obtained making two different coordinates changes in Proposition \ref{pro:p:to:psi}. To clarify which coordinates change is employed we write
\[
\text{assertion},\;\;(a)\;\;\;(\text{respectively}\;(b))
\]
which means that the assertion holds when the  coordinates change $(a)$ is chosen (respectively when  $(b)$ is chosen).  If the assertion contains $\ep $  we mean that the assertion corresponding to $\ep$ holds when we choose the coordinates change $(\ep)$, $\ep=a, b$. If the assertion contains neither $(a)$, $(b)$ nor $\ep$, it means that the assertion holds for both coordinates changes $(a)$ and $(b)$.

Let 
\[
G=M^{2}|dx|^2+M^2\lr{\xi}^{-2}|d\xi|^2=M^2\big(|dx|^2+\lr{\xi}^{-2}|d\xi|^2\big).
\]
\begin{lem}
\label{lem:kakutyo:a}Let $f(z)$ be a smooth function in a neighborhood of ${\bar z}$ and let $z_j(x,\xi)\in S(M^{-1}, G)$ and ${f_M}(x,\xi)=f(z(x,\xi)+{\bar z})$. Then ${f_M}(x,\xi)\in S(M^{-r}, G)$
if $\dif_z^{\al}f({\bar z})=0$ for $0\leq |\al|<r$. In particular $
{f_M}(x,\xi)-f({\bar z})\in S(M^{-1},G)$. 
\end{lem}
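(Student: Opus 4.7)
The plan is to reduce the claim to a direct bookkeeping exercise combining Fa\`a di Bruno's formula with a Taylor expansion of $f$ at $\bar z$. Since $z_j(x,\xi)\in S(M^{-1},G)$ implies $|z(x,\xi)|\le CM^{-1}$, the argument $z(x,\xi)+{\bar z}$ stays in the domain of smoothness of $f$ once $M$ is large, so $f_M$ is itself smooth; it remains to control its $(x,\xi)$-derivatives.

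First I would unwind what $S(M^{-r},G)$ demands: an estimate of the form
\[
|\dif_x^{\al}\dif_\xi^{\be}f_M(x,\xi)|\le C_{\al\be}\,M^{-r+|\al|+|\be|}\lr{\xi}^{-|\be|},
\]
since the weights defining $G$ contribute $M$ per $x$-derivative and $M\lr{\xi}^{-1}$ per $\xi$-derivative. Applying Fa\`a di Bruno, each such derivative of $f_M=f(z(x,\xi)+{\bar z})$ is a finite sum of terms of the shape
\[
(\dif_z^{\mu}f)(z(x,\xi)+{\bar z})\prod_{i=1}^{|\mu|}\dif_x^{\al_i}\dif_\xi^{\be_i}z_{k_i}(x,\xi),
\]
with $\sum\al_i=\al$, $\sum\be_i=\be$, and $|\al_i|+|\be_i|\ge 1$. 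The hypothesis $z_{k_i}\in S(M^{-1},G)$ gives $|\dif_x^{\al_i}\dif_\xi^{\be_i}z_{k_i}|\le C\,M^{-1+|\al_i|+|\be_i|}\lr{\xi}^{-|\be_i|}$, and multiplying over $i$ yields an overall contribution $M^{-|\mu|+|\al|+|\be|}\lr{\xi}^{-|\be|}$.

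Next I handle the factor $(\dif_z^{\mu}f)(z+{\bar z})$. When $|\mu|\ge r$ the function is merely smooth, hence bounded on the compact neighborhood swept by $z(x,\xi)+{\bar z}$, and $M^{-|\mu|}\le M^{-r}$ is already enough. When $|\mu|<r$, I invoke Taylor's formula for $\dif_z^{\mu}f$ around ${\bar z}$: by hypothesis $\dif_z^{\mu+\nu}f({\bar z})=0$ for all $|\nu|<r-|\mu|$, so $|(\dif_z^{\mu}f)(z+{\bar z})|\le C|z|^{r-|\mu|}\le C\,M^{-(r-|\mu|)}$. Multiplying this with the product of $z$-derivatives gives
\[
M^{-(r-|\mu|)}\cdot M^{-|\mu|+|\al|+|\be|}\lr{\xi}^{-|\be|}=M^{-r+|\al|+|\be|}\lr{\xi}^{-|\be|},
\]
matching the required symbol bound. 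Summing the finitely many Fa\`a di Bruno terms gives $f_M\in S(M^{-r},G)$, and the last assertion is the special case $r=1$.

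I do not anticipate a genuine obstacle; the only point requiring care is the induction-free Taylor estimate of $\dif_z^{\mu}f$ at ${\bar z}$, which hinges on recognising that the vanishing of $\dif_z^{\al}f({\bar z})$ for $|\al|<r$ is inherited by $\dif_z^{\mu}f$ to the appropriate lower order $r-|\mu|$. Otherwise the proof is a mechanical power count of $M$ and $\lr{\xi}$ through Fa\`a di Bruno.
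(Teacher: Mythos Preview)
Your proof is correct and essentially equivalent to the paper's: both rest on Taylor's theorem at $\bar z$ together with the fact that products $z(x,\xi)^{\alpha}$ lie in $S(M^{-|\alpha|},G)$. The paper organizes the argument by Taylor-expanding $f(z+\bar z)$ once to order $r$ with integral remainder (so that $f_M$ is exhibited directly as a finite sum of products of symbols in $S(M^{-r},G)$ or better), whereas you differentiate $f_M$ first via Fa\`a di Bruno and then invoke Taylor on each outer factor $\partial_z^{\mu}f$; the content is the same.
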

It is easy to see $
y(x)\in S(M^{-1},G)$ and $\eta(\xi)\in S(M^{-1}, G)$. 
Indeed $y_j\in S(M^{-1}, G)$ is clear while we see
\begin{gather*}
\big|\dif_{\xi}^{\al}\eta_j(\xi)\big|\precsim \sum_{|\al_i|\geq 1} M^{-1} |\chi^{(s)}(M(\xi_j\lr{\xi}^{-1}-\delta_{jd}))|\\
\times|\dif_{\xi}^{\al_1}(M(\xi_j\lr{\xi}^{-1}-\delta_{jd}))|\cdots |\dif_{\xi}^{\al_s}(M(\xi_j\lr{\xi}^{-1}-\delta_{jd}))|\\
\precsim \sum_{s\leq |\al|} M^{-1} M^s\lr{\xi}^{-|\al|}\precsim M^{-1+|\al|}\lr{\xi}^{-|\al|}
\end{gather*}
so that $\eta_j\in S(M^{-1}, G)$ where $A\precsim B$ means  that $A\leq CB$ with some $C>0$ independent of $M$ and $\gamma$.

\begin{lem}
\label{lem:eta:seimitu:a}We have $\dif \eta_j/\dif \xi_k-\delta_{j k}\chi^{(1)}(M\xi_j\lr{\xi}^{-1})\lr{\xi}^{-1}\in S(M^{-1}\lr{\xi}^{-1}, G)$ for $1\leq j\leq d-1$.
\end{lem}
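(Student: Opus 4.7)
The plan is to compute $\partial\eta_j/\partial\xi_k$ directly and identify the main diagonal piece, then show that the remainder gains a factor $M^{-1}$ because of a support consideration on $\chi^{(1)}$.

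\textbf{Step 1: Direct differentiation.} Since $j\leq d-1$ we have $\delta_{jd}=0$, so $\eta_j(\xi)=M^{-1}\chi(M\xi_j\lr{\xi}^{-1})$. The chain rule gives
\[
\frac{\dif\eta_j}{\dif\xi_k}=\chi^{(1)}(M\xi_j\lr{\xi}^{-1})\,\dif_{\xi_k}(\xi_j\lr{\xi}^{-1})=\chi^{(1)}(M\xi_j\lr{\xi}^{-1})\bigl(\delta_{jk}\lr{\xi}^{-1}-\xi_j\xi_k\lr{\xi}^{-3}\bigr).
\]
Thus the difference from the stated diagonal term is exactly $R(\xi)=-\chi^{(1)}(M\xi_j\lr{\xi}^{-1})\xi_j\xi_k\lr{\xi}^{-3}$, and the task reduces to showing $R\in S(M^{-1}\lr{\xi}^{-1},G)$.

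\textbf{Step 2: Extracting the $M^{-1}$ factor.} Writing $u=M\xi_j\lr{\xi}^{-1}$, I rearrange
\[
\chi^{(1)}(u)\cdot\frac{\xi_j}{\lr{\xi}}=M^{-1}\cdot u\,\chi^{(1)}(u),
\]
and set $\tilde\chi(u)=u\,\chi^{(1)}(u)$. Because $\chi$ is constant for $|s|\geq2$ and equals $s$ for $|s|\leq1$, its derivative $\chi^{(1)}$ is compactly supported, so $\tilde\chi\in C_c^{\infty}(\R)$. Hence
\[
R(\xi)=-M^{-1}\,\tilde\chi(M\xi_j\lr{\xi}^{-1})\cdot\xi_k\lr{\xi}^{-2}.
\]

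\textbf{Step 3: Symbol class bookkeeping.} The function $\xi_j\lr{\xi}^{-1}\in S(1,G)$ (in fact in the stronger standard class), and applying a compactly supported smooth function of it preserves this symbol class — this follows from the Faà di Bruno formula in the same manner the paper uses to verify $\eta_j\in S(M^{-1},G)$: each $\dif_\xi$ of $M\xi_j\lr{\xi}^{-1}$ contributes $O(M\lr{\xi}^{-|\al|})$, giving
\[
\bigl|\dif_\xi^\al\tilde\chi(M\xi_j\lr{\xi}^{-1})\bigr|\precsim M^{|\al|}\lr{\xi}^{-|\al|},
\]
so $\tilde\chi(M\xi_j\lr{\xi}^{-1})\in S(1,G)$. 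Combining with the trivial $\xi_k\lr{\xi}^{-2}\in S(\lr{\xi}^{-1},G)$ and the multiplicativity $S(m_1,G)\cdot S(m_2,G)\subset S(m_1m_2,G)$, one concludes $R\in S(M^{-1}\lr{\xi}^{-1},G)$.

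The only subtlety is the factorization in Step~2: naively, one only has $R\in S(\lr{\xi}^{-1},G)$, which is weaker than required. The gain of $M^{-1}$ is invisible from size considerations on all of $\R^d$, but is recovered by exploiting that $\chi^{(1)}$ is supported where $|M\xi_j\lr{\xi}^{-1}|\leq2$, so that the factor $\xi_j/\lr{\xi}$ is itself of order $M^{-1}$ on the relevant set; encoding this as $u\chi^{(1)}(u)$ is the cleanest way to keep the symbol estimates uniform. Everything else is routine derivative counting against the metric $G=M^2|dx|^2+M^2\lr{\xi}^{-2}|d\xi|^2$.
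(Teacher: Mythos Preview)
Your proof is correct and follows essentially the same route as the paper: both compute $\dif\eta_j/\dif\xi_k$ directly and extract the $M^{-1}$ gain by rewriting the remainder in terms of $u\,\chi^{(1)}(u)$ (the paper additionally separates the diagonal case $k=j$, obtaining $M^{-2}$ via $u^2\chi^{(1)}(u)$, but this is a cosmetic difference). The key observation---that $\chi^{(1)}$ has compact support so $u\chi^{(1)}(u)\in C_c^{\infty}$---is identical in both arguments.
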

By Lemma \ref{lem:kakutyo:a} we have $\psi_M(x, \xi)=\psi(y(x),\eta(\xi)+e_d)\in S(M^{-1}, G)$ 
which we denote by $\psi(x, \xi)$ dropping $M$ to simplify notation. Denoting 
\begin{equation}
\label{eq:ell:bar}
{\bar\ell}(t, x, \xi)=\ell( t, y(x), \eta(\xi)+e_d),\quad {\bar q}(t, x, \xi)=q( t, y(x), \eta(\xi)+e_d)
\end{equation}
we have $\ell_M={\bar \ell}(t, x, \xi)\lr{\xi}$ and $q_M={\bar q}(t, x, \xi)\lr{\xi}^2$ which we denote by $\ell(t, x, \xi)$ and $q(t, x, \xi)$ dropping $M$ again. Note that 
${\bar \ell}\in S(M^{-1}, G)$ and ${\bar q}\in S(M^{-2}, G)$ in view of Lemma \ref{lem:kakutyo:a} and Proposition \ref{pro:p:to:psi} shows that
\begin{equation}
\label{eq:q:sita:a}
\begin{split}
{\bar q}(t, x, \xi)
\geq c\,(t-\psi(x,\xi))^2.
\end{split}
\end{equation}
\begin{lem}
\label{lem:kihon:1}We have $q\in S(M^{-2}\lr{\xi}^2, G)$. There exists $C>0$ such that
\begin{gather*}
\big|\dif_{x_1}q\big|\leq CM^{-1/2}\sqrt{q}\,\lr{\xi},
\quad \big|\dif_{x_j}q\big|\leq C\sqrt{q}\,\lr{\xi},\;\;j\neq 1,\quad (a),\\
\big|\dif_{\xi_j}q\big|\leq CM^{-1/2}\sqrt{q},\;\;j=1, d,\quad \big|\dif_{\xi_j}q\big|\leq C\sqrt{q},\;\;j\neq 1, d,\quad (b).
\end{gather*}
\end{lem}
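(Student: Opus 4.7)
Plan. The statement has two pieces: the symbol-class membership of $q$ and the refined derivative inequalities. I would handle them in that order.

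For the symbol-class claim, write $q=\bar q\lr{\xi}^2$ with $\bar q(t,x,\xi)=q(t,y(x),\eta(\xi)+e_d)$ (dropping the subscript on the original $q$ momentarily). Proposition \ref{pro:p:to:psi} supplies $q\geq 0$ with $q(0,0,e_d)=0$ and the lower bound $q(t,x,\xi)\ge c(t-\psi)^2|\xi|^2$. Combining nonnegativity with the fact that a smooth $\geq 0$ function attains an interior minimum at $(0,0,e_d)$ forces all first-order partials of $q$ in $t$, $x$ and $\xi$ to vanish at that point. After subtracting the Taylor data in $t$ (which is itself $O(M^{-2})$ since $|t|\le T_0M^{-1}$), $q(t,\cdot,\cdot)$ vanishes to order two at $(0,e_d)$ and Lemma \ref{lem:kakutyo:a} with $r=2$, applied uniformly in $t$, gives $\bar q\in S(M^{-2},G)$. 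Multiplication by $\lr{\xi}^2\in S(\lr{\xi}^2,G)$ concludes $q\in S(M^{-2}\lr{\xi}^2,G)$.

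The derivative bounds rest on the Malgrange--Glaeser inequality: for $f\in C^2(\R)$ nonnegative, $|f'(x_0)|^2\le 2\sup|f''|\cdot f(x_0)$. I would apply it to $\bar q$ viewed in turn as a function of one coordinate, then invoke Leibniz and the chain rule. In case $(a)$, a direct computation gives $\partial_{x_j}\bar q=(\partial_{y_j}q)(t,y(x),\eta(\xi)+e_d)\cdot\chi^{(1)}(Mx_j)$. For $j=1$, condition \eqref{eq:kakkosiki} yields $\partial_{y_1}^2 q(0,0,e_d)=0$, so Lemma \ref{lem:kakutyo:a} applied to $\partial_{y_1}^2 q$ forces $\partial_{y_1}^2 q(t,y(x),\eta(\xi)+e_d)\in S(M^{-1},G)$; Malgrange--Glaeser in the $y_1$-direction then produces $|\partial_{y_1}q|^2\le CM^{-1}\bar q$, whence $|\partial_{x_1}\bar q|\le CM^{-1/2}\sqrt{\bar q}$ and, after multiplying by $\lr{\xi}^2$, the first inequality. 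For $j\ne 1$ the Malgrange--Glaeser output carries no smallness factor, giving $|\partial_{y_j}q|\le C\sqrt{\bar q}$ and hence $|\partial_{x_j}q|\le C\sqrt q\,\lr{\xi}$.

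Case $(b)$ is parallel with $\xi$-derivatives. From $\partial_{\xi_j}q=(\partial_{\xi_j}\bar q)\lr{\xi}^2+2\xi_j\bar q$ and $\partial_{\xi_j}\bar q=\sum_k(\partial_{\zeta_k}q)\,\partial_{\xi_j}\eta_k$ together with Lemma \ref{lem:eta:seimitu:a} (so $\partial_{\xi_j}\eta_k=O(\lr{\xi}^{-1})$), it suffices to bound each $\partial_{\zeta_k}q$ at $(t,y(x),\eta(\xi)+e_d)$ via Malgrange--Glaeser in the $\zeta_k$-direction. The refined $M^{-1/2}$ factor appears for $k=1$ thanks to the condition $\partial_{\xi_1}^2 q(0,e_d)=0$ of \eqref{eq:kakkosiki}, and for $k=d$ thanks to Euler's relation $\xi\cdot\nabla_\xi q=2q$: differentiating this identity and evaluating at $(0,e_d)$ gives $\partial_{\xi_d}\partial_{\xi_k}q(0,e_d)=\partial_{\xi_k}q(0,e_d)=0$ and in particular $\partial_{\xi_d}^2 q(0,e_d)=0$. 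For $k\ne 1,d$ one uses only the plain Glaeser bound. The boundary term $2\xi_j\bar q$ is absorbed using $|\xi_j|\le\lr{\xi}$ and $\bar q\le CM^{-2}$: $|2\xi_j\bar q|\le CM^{-1}\sqrt{\bar q}\,\lr{\xi}=CM^{-1}\sqrt q\le CM^{-1/2}\sqrt q$.

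Main obstacle. The subtle point is the quantitative application of Malgrange--Glaeser in the localized setting. The inequality requires $\sup|f''|$ over an interval wide enough to contain the optimal Taylor shift $h^*=-f'(x_0)/\sup|f''|$, but the $O(M^{-1})$ smallness of $\partial_{y_j}^2 q$ (after composition with $y,\eta$) is valid only on the shrinking region $|y|+|\zeta-e_d|\lesssim M^{-1}$, while over the full fixed-size positivity domain of the original $q$ the sup of $|\partial_{y_j}^2 q|$ is merely $O(1)$. Making these scales compatible---so that one genuinely extracts the $M^{-1/2}$ improvement rather than just the naive $O(1)$ Glaeser bound---is the technical heart of the argument, and requires either a localized variant of Glaeser tailored to the effectively hyperbolic normal form or exploiting the structural bound $\bar q\ge c(t-\psi_M)^2$.
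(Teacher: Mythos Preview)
Your plan for the symbol class and for the unrefined Glaeser bounds is fine and matches the paper. The gap is in the $M^{-1/2}$-refined cases; your ``main obstacle'' is real, but its resolution is neither a localized Glaeser variant nor the lower bound $\bar q\ge c(t-\psi)^2$.

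The missing step is an elementary linear-algebra fact. Since $\tilde q(y,\eta)=q(t,y,\eta+e_d)\ge 0$ vanishes at the origin, its Hessian there is positive semidefinite; a positive semidefinite form with a zero diagonal entry has the entire corresponding row and column equal to zero. Hence in case $(a)$, $\partial_{y_1}^2\tilde q(0,0)=0$ forces $\partial_{y_1}\partial_{z}\tilde q(0,0)=0$ for \emph{every} variable $z$, so the quadratic Taylor part of $\tilde q$ contains no $y_1$ whatsoever. This upgrades the vanishing order: $\partial_{y_1}\tilde q$ vanishes to second order and $\partial_{y_1}^2\tilde q$ to first. Lemma~\ref{lem:kakutyo:a} then gives $(\partial_{y_1}\tilde q)_M\in S(M^{-2},G)$ and $(\partial_{y_1}^2\tilde q)_M\in S(M^{-1},G)$, and the chain rule yields
\[
\partial_{x_1}^2\bar q=(\partial_{y_1}^2\tilde q)_M\,(\chi^{(1)})^2+(\partial_{y_1}\tilde q)_M\cdot M\chi^{(2)}\in S(M^{-1},G),
\]
a \emph{global} bound on $\R^{2d}$. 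One now applies Glaeser to the localized $\bar q$ (not to $\tilde q$) in the variable $x_1\in\R$: $\bar q\ge 0$ everywhere and $\sup_{x_1\in\R}|\partial_{x_1}^2\bar q|\le CM^{-1}$, so the standard inequality gives $|\partial_{x_1}\bar q|^2\le CM^{-1}\bar q$ with no scale mismatch. With only your hypothesis $\partial_{y_1}^2\tilde q(0,0)=0$ one has merely $(\partial_{y_1}\tilde q)_M\in S(M^{-1},G)$, the second chain-rule term is $O(1)$, and the argument collapses to the unrefined bound---precisely the obstruction you flagged. You in fact wrote down the analogous fact for $\xi_d$ via Euler (namely $\partial_{\xi_d}\partial_{\xi_k}q(0,e_d)=0$ for all $k$) without recognizing it as the mechanism that dissolves the obstacle; for $\xi_1$ in case $(b)$ the same semidefiniteness argument applies.
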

\begin{lem}
\label{lem:kihon:2}We have $\psi\in S(M^{-1}, G)$ and
\[
\begin{drcases}
{ \psi}(x, \xi)-M^{-1}\chi(M\xi_1\lr{\xi}^{-1})\in S(M^{-2}, G)\\
 \dif{ \psi}/\dif{\xi_k}-\delta_{1 k}\chi^{(1)}(M\xi_1\lr{\xi}^{-1})\lr{\xi}^{-1}\in S(M^{-1}\lr{\xi}^{-1}, G)
 \end{drcases}\quad(a),
 \]
\[
\begin{drcases}
{\psi}(x, \xi)-\varep M^{-1}\chi(Mx_1)-cM^{-1}\chi(Mx_d)\in S(M^{-2}, G),\\
\dif{\psi}/\dif{x_k}-\varep\delta_{1 k}\chi^{(1)}(Mx_1)-c\delta_{d k}\chi^{(1)}(Mx_d)\in S(M^{-1}, G)
\end{drcases}\quad (b).
\]
\end{lem}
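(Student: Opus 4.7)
The plan is to substitute the explicit forms (\ref{eq:psi:gutai}) of $\psi$ into $\psi_M(x,\xi) = \psi(y(x), \eta(\xi)+e_d)$ and isolate the announced leading part, controlling every remainder via Lemma \ref{lem:kakutyo:a} and every $\xi$-derivative of $\eta$ via Lemma \ref{lem:eta:seimitu:a}. The bound $\psi \in S(M^{-1}, G)$ is immediate once one observes $\psi(0, e_d) = 0$; this follows from $q(0, 0, e_d) = 0$ combined with $q \geq c(t-\psi)^2|\xi|^2$, and then the second assertion of Lemma \ref{lem:kakutyo:a} applies with $z = (y, \eta)$ and $\bar z = (0, e_d)$.

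For case $(a)$, write $\psi = \xi_1/|\xi| + r$ with $dr(0, e_d) = 0$ (so also $r(0, e_d) = 0$). Since $(\eta+e_d)_1 = \eta_1 = M^{-1}\chi(M\xi_1\lr{\xi}^{-1})$, we have
\[
\psi_M = \frac{\eta_1}{|\eta+e_d|} + r(y, \eta+e_d).
\]
Applying Lemma \ref{lem:kakutyo:a} to $f(\eta) = 1/|\eta+e_d|-1$ (which vanishes at $\eta = 0$) gives $1/|\eta+e_d| - 1 \in S(M^{-1}, G)$, hence $\eta_1/|\eta+e_d| - \eta_1 \in S(M^{-2}, G)$; and Lemma \ref{lem:kakutyo:a} with $r=2$ gives $r(y, \eta+e_d) \in S(M^{-2}, G)$. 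Adding these yields the first line of $(a)$. For the $\dif \psi/\dif \xi_k$ assertion, the chain rule gives
\[
\frac{\dif \psi_M}{\dif \xi_k} = \sum_{j=1}^d (\dif_{\xi_j}\psi)(y, \eta+e_d)\,\frac{\dif \eta_j}{\dif \xi_k}.
\]
Since $(\dif_{\xi_j}\psi)(0, e_d) = \delta_{1j}$, Lemma \ref{lem:kakutyo:a} gives $(\dif_{\xi_1}\psi)(y, \eta+e_d) - 1 \in S(M^{-1}, G)$ while $(\dif_{\xi_j}\psi)(y, \eta+e_d) \in S(M^{-1}, G)$ for $j \neq 1$. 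For $1 \leq j \leq d-1$, Lemma \ref{lem:eta:seimitu:a} splits off the diagonal leading part $\delta_{jk}\chi^{(1)}(M\xi_j\lr{\xi}^{-1})\lr{\xi}^{-1}$; for $j = d$ a direct computation using $\eta_d = M^{-1}\chi(M(\xi_d\lr{\xi}^{-1}-1))$ gives $\dif \eta_d/\dif \xi_k \in S(\lr{\xi}^{-1}, G)$. Multiplying out, only the $j=1$, $k=1$ branch contributes the announced leading piece $\chi^{(1)}(M\xi_1\lr{\xi}^{-1})\lr{\xi}^{-1}$; all remaining products lie in $S(M^{-1}\lr{\xi}^{-1}, G)$.

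For case $(b)$, write $\psi = \varep x_1 + cx_d + r$ with $dr(0, e_d) = 0$. Then
\[
\psi_M = \varep y_1 + c y_d + r(y, \eta+e_d) = \varep M^{-1}\chi(Mx_1) + cM^{-1}\chi(Mx_d) + r(y, \eta+e_d),
\]
with last summand in $S(M^{-2}, G)$ by Lemma \ref{lem:kakutyo:a}. Since $\eta$ is independent of $x$, the $x_k$-derivative reads
\[
\frac{\dif \psi_M}{\dif x_k} = \varep \delta_{1k}\chi^{(1)}(Mx_1) + c\delta_{dk}\chi^{(1)}(Mx_d) + (\dif_{x_k} r)(y, \eta+e_d)\,\chi^{(1)}(Mx_k),
\]
and $(\dif_{x_k} r)(0, e_d) = 0$ places the last term in $S(M^{-1}, G)$, again via Lemma \ref{lem:kakutyo:a}.

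The delicate step is case $(a)$: one must carefully peel $\eta_1/|\eta+e_d|$ into $\eta_1$ plus an $S(M^{-2}, G)$ error before comparing with $M^{-1}\chi(M\xi_1\lr{\xi}^{-1})$, and one must handle the $j = d$ term in the chain rule (not covered by Lemma \ref{lem:eta:seimitu:a}), which is absorbed thanks to the cancellation $(\dif_{\xi_d}\psi)(0, e_d) = 0$.
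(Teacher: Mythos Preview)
Your proof is correct and follows essentially the same approach as the paper's: decompose $\psi_M$ using \eqref{eq:psi:gutai}, peel off $\eta_1/|\eta+e_d|-\eta_1\in S(M^{-2},G)$ via $1/|\eta+e_d|=1+{\tilde k}$ with ${\tilde k}\in S(M^{-1},G)$, invoke Lemma \ref{lem:kakutyo:a} for the $r$-remainder, and use Lemma \ref{lem:eta:seimitu:a} for the $\xi$-derivatives. You have supplied considerably more detail than the paper (which compresses the derivative assertion in $(a)$ to a single clause), in particular your explicit handling of the $j=d$ term in the chain rule via $(\partial_{\xi_d}\psi)(0,e_d)=0$ and of case $(b)$, which the paper dismisses as ``similar''.
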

\begin{prop}
\label{pro:poison:1}We have $
\big|\{q, {\psi}\}\big|\leq CM^{-1/2}\sqrt{q}\,\lr{\xi}^{-1}$.
\end{prop}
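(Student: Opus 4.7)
The plan is to expand the Poisson bracket coordinate-wise as
\[
\{q,\psi\} = \sum_{j=1}^{d}\bigl( \partial_{\xi_j}q\cdot\partial_{x_j}\psi - \partial_{x_j}q\cdot\partial_{\xi_j}\psi\bigr),
\]
and to bound each of the $2d$ products separately using the estimates of Lemmas \ref{lem:kihon:1} and \ref{lem:kihon:2}. For the derivatives of $q$ not directly listed in Lemma \ref{lem:kihon:1}, I would first supply a generic Glaeser-type bound: since $q\geq 0$ lies in $S(M^{-2}\lr{\xi}^{2},G)$, the one-dimensional inequality $|\partial f|^{2}\leq 2f\,\|\partial^{2}f\|_\infty$ applied to $q$ gives $|\partial_{\xi_j}q|\leq C\sqrt{q}$ (from $\|\partial_{\xi_j}^{2}q\|_\infty\leq C$) and $|\partial_{x_j}q|\leq C\sqrt{q}\,\lr{\xi}$ (from $\|\partial_{x_j}^{2}q\|_\infty\leq C\lr{\xi}^{2}$).

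In case $(a)$, Lemma \ref{lem:kihon:2}$(a)$ shows that the leading part of $\psi$ depends only on $\xi$, so $\partial_{x_j}\psi\in S(M^{-1},G)$, while $\partial_{\xi_1}\psi = \chi^{(1)}(M\xi_{1}\lr{\xi}^{-1})\lr{\xi}^{-1}$ modulo $S(M^{-1}\lr{\xi}^{-1},G)$ and $\partial_{\xi_j}\psi\in S(M^{-1}\lr{\xi}^{-1},G)$ for $j\neq 1$. The critical term is $\partial_{x_{1}}q\cdot\partial_{\xi_{1}}\psi$, to which Lemma \ref{lem:kihon:1}$(a)$ supplies the improved bound $|\partial_{x_{1}}q|\leq CM^{-1/2}\sqrt{q}\,\lr{\xi}$; the $\lr{\xi}^{-1}$ decay of $\partial_{\xi_1}\psi$ combines with this to produce a contribution of the claimed order, while every one of the remaining $2d-1$ products picks up an extra factor of $M^{-1}$ or $\lr{\xi}^{-1}$ from the structure of $\partial\psi$.

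In case $(b)$, Lemma \ref{lem:kihon:2}$(b)$ places the $O(1)$ parts of $\partial\psi$ along $\partial_{x_{1}}$ and $\partial_{x_{d}}$ only, and gives $\partial_{\xi_j}\psi\in S(M^{-1}\lr{\xi}^{-1},G)$ uniformly. The critical contributions are now $\partial_{\xi_{j}}q\cdot\partial_{x_{j}}\psi$ for $j=1,d$, for which Lemma \ref{lem:kihon:1}$(b)$ provides the improved bound $|\partial_{\xi_{j}}q|\leq CM^{-1/2}\sqrt{q}$; the remaining terms of the form $\partial_{x_{j}}q\cdot\partial_{\xi_{j}}\psi$ gain the $\lr{\xi}^{-1}$ directly from $\partial_{\xi_{j}}\psi$, while the $\partial_{\xi_{j}}q\cdot\partial_{x_{j}}\psi$ for $j\neq 1,d$ are controlled by the generic Glaeser bound combined with $\partial_{x_{j}}\psi\in S(M^{-1},G)$.

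The main technical obstacle is keeping simultaneous track of the $M$- and $\lr{\xi}$-powers through every term: the $M^{-1/2}$ improvement originates in the vanishing conditions \eqref{eq:kakkosiki}, transported through Glaeser's inequality and the localization, while the $\lr{\xi}^{-1}$ reflects the $\xi$-scaling of the derivatives of $\psi$. The delicate point is confirming that after summation no combination of terms exceeds the claimed bound, uniformly in the large parameters $M$ and $\gamma$.
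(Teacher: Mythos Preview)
Your approach is exactly the paper's: the author's proof is the single line ``clear from Lemmas~\ref{lem:kihon:1} and~\ref{lem:kihon:2},'' and you have spelled out precisely that computation, including the auxiliary Glaeser bounds $|\partial_{\xi_j}q|\le C\sqrt{q}$ and $|\partial_{x_j}q|\le C\sqrt{q}\,\lr{\xi}$ that are implicit in $q\in S(M^{-2}\lr{\xi}^{2},G)$ with $q\ge 0$.

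One remark: if you actually carry your own bookkeeping through, the dominant contribution in case $(a)$ is $|\partial_{x_1}q|\cdot|\partial_{\xi_1}\psi|\le CM^{-1/2}\sqrt{q}\,\lr{\xi}\cdot\lr{\xi}^{-1}=CM^{-1/2}\sqrt{q}$, and likewise in case $(b)$ the terms $|\partial_{\xi_j}q|\cdot|\partial_{x_j}\psi|$ for $j=1,d$ give $CM^{-1/2}\sqrt{q}$. So the argument (both yours and the paper's) delivers $|\{q,\psi\}|\le CM^{-1/2}\sqrt{q}$, not $CM^{-1/2}\sqrt{q}\,\lr{\xi}^{-1}$; the extra $\lr{\xi}^{-1}$ in the displayed statement appears to be a typographical slip (the proposition is not invoked elsewhere, so the discrepancy has no downstream effect). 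Your reasoning is sound; just be aware that the inequality you have actually established is the one without the spurious $\lr{\xi}^{-1}$.
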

\begin{proof}The proof is clear from Lemmas \ref{lem:kihon:1} and \ref{lem:kihon:2}.
\end{proof}
 Thanks to Lemma \ref{lem:kihon:2}  one sees
\begin{lem}
\label{lem:kokan} We have 
$\{{\ell}, {\psi}\}+\kappa \chi^{(1)}(Mx_1)\chi^{(1)}(M\xi_1\lr{\xi}^{-1})
\in S(M^{-1}, G)$ where $\kappa=\dif_{x_1}\ell(0, e_d)$, $(a)$ or $\kappa=-\dif_{\xi_1}\ell(0, e_d)$, $(b)$and  $|\kappa|<1$ by \eqref{eq:kap:okisa}.
\end{lem}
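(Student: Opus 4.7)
My plan is to expand the Poisson bracket $\{\ell,\psi\}=\sum_j(\dif_{\xi_j}\ell\,\dif_{x_j}\psi-\dif_{x_j}\ell\,\dif_{\xi_j}\psi)$ and peel off the contributions that already lie in $S(M^{-1},G)$; after the dust settles a single product should survive at size $S(1,G)$, and I expect it to match the claimed $-\kappa\,\chi^{(1)}(Mx_1)\chi^{(1)}(M\xi_1\lr{\xi}^{-1})$. First I would record the preliminary bounds $\dif_{x_j}\ell\in S(\lr{\xi},G)$ and $\dif_{\xi_j}\ell\in S(1,G)$, which follow from $\ell=\bar\ell\lr{\xi}$ with $\bar\ell\in S(M^{-1},G)$. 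Applying the chain rule to $\bar\ell(t,x,\xi)=\ell(t,y(x),\eta(\xi)+e_d)$ and invoking Lemma \ref{lem:kakutyo:a} to replace $(\dif_{x_j}\ell)(t,y,\eta+e_d)$ by its value at the origin, I would upgrade these to
\[
\dif_{x_j}\ell\equiv(\dif_{x_j}\ell)(0,0,e_d)\,\chi^{(1)}(Mx_j)\,\lr{\xi}\pmod{S(\lr{\xi}\,M^{-1},G)},
\]
and, using Lemma \ref{lem:eta:seimitu:a} to identify the main part of $\dif_{\xi_j}\eta_k$,
\[
\dif_{\xi_j}\ell\equiv(\dif_{\xi_j}\ell)(0,0,e_d)\,\chi^{(1)}(M\xi_j\lr{\xi}^{-1})\pmod{S(M^{-1},G)}\qquad(j<d).
\]
By Euler's identity together with $\ell(0,e_d)=0$ I obtain $(\dif_{\xi_d}\ell)(0,e_d)=0$, and hence $\dif_{\xi_d}\ell\in S(M^{-1},G)$; this refinement will be essential in case $(b)$.

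For case $(a)$ I would feed in Lemma \ref{lem:kihon:2}, which gives $\dif_{x_k}\psi\in S(M^{-1},G)$ and $\dif_{\xi_k}\psi\equiv\delta_{1k}\chi^{(1)}(M\xi_1\lr{\xi}^{-1})\lr{\xi}^{-1}\pmod{S(M^{-1}\lr{\xi}^{-1},G)}$. Then every product $\dif_{\xi_j}\ell\cdot\dif_{x_j}\psi$ lies in $S(1,G)\cdot S(M^{-1},G)\subset S(M^{-1},G)$, and every $\dif_{x_j}\ell\cdot\dif_{\xi_j}\psi$ with $j\neq 1$ lies in $S(\lr{\xi},G)\cdot S(M^{-1}\lr{\xi}^{-1},G)\subset S(M^{-1},G)$. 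Substituting the asymptotic expansions into the one surviving term,
\[
-\dif_{x_1}\ell\cdot\dif_{\xi_1}\psi\equiv -(\dif_{x_1}\ell)(0,0,e_d)\,\chi^{(1)}(Mx_1)\chi^{(1)}(M\xi_1\lr{\xi}^{-1})\pmod{S(M^{-1},G)},
\]
which is precisely $-\kappa\,\chi^{(1)}(Mx_1)\chi^{(1)}(M\xi_1\lr{\xi}^{-1})$.

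Case $(b)$ I would treat by the same procedure with the roles of $x$ and $\xi$ interchanged. Lemma \ref{lem:kihon:2} now supplies $\dif_{\xi_k}\psi\in S(M^{-1}\lr{\xi}^{-1},G)$ and $\dif_{x_k}\psi\equiv\varep\delta_{1k}\chi^{(1)}(Mx_1)+c\,\delta_{dk}\chi^{(1)}(Mx_d)\pmod{S(M^{-1},G)}$. The $\dif_{x_j}\ell\cdot\dif_{\xi_j}\psi$ products again all belong to $S(M^{-1},G)$; among the $\dif_{\xi_j}\ell\cdot\dif_{x_j}\psi$ contributions only $j=1$ and $j=d$ can be of size $S(1,G)$, and the would-be $j=d$ term $c\,\dif_{\xi_d}\ell\cdot\chi^{(1)}(Mx_d)$ is swallowed by $S(M^{-1},G)$ thanks to the Euler remark above. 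What remains is $\varep\,(\dif_{\xi_1}\ell)(0,0,e_d)\,\chi^{(1)}(Mx_1)\chi^{(1)}(M\xi_1\lr{\xi}^{-1})$ modulo $S(M^{-1},G)$, which with $\varep=1$ and $\kappa=-(\dif_{\xi_1}\ell)(0,0,e_d)$ agrees with $-\kappa\,\chi^{(1)}(Mx_1)\chi^{(1)}(M\xi_1\lr{\xi}^{-1})$.

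The main obstacle I anticipate is the symbol-calculus bookkeeping in the preliminary step: each remainder produced by Lemmas \ref{lem:kakutyo:a} and \ref{lem:eta:seimitu:a}, when paired with its partner factor from Lemma \ref{lem:kihon:2}, must genuinely land in $S(M^{-1},G)$ rather than merely $S(1,G)$, and the factors of $\lr{\xi}$ have to cancel cleanly across the product. The second delicate point is the role of Euler's identity in case $(b)$: without $(\dif_{\xi_d}\ell)(0,e_d)=0$ the $c\,dx_d$ component of $d\psi$ would leave an $x_d$-dependent term unaccounted for by the statement.
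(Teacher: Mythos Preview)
Your argument is correct and follows the same route as the paper: both establish $\dif_{\eta_d}\tilde\ell\in S(M^{-1},G)$ via Euler's identity (whence $\dif_{\xi_d}\ell$ is harmless in case~$(b)$), then invoke Lemma~\ref{lem:kihon:2} to kill all but the $j=1$ contribution in the Poisson bracket; the paper simply compresses your term-by-term bookkeeping into the single sentence ``the rest of the proof follows from Lemma~\ref{lem:kihon:2}.'' One small loose end: you treat case~$(b)$ only for $\varep=1$, but when $\varep=0$ the construction in Proposition~\ref{pro:p:to:psi} takes $\ell=0$, so $\kappa=0$ and the claim is vacuous.
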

%

%
%%%%%%%%%%%%%%%%%%%%%%%%%%%%%%%%%%
%%%%%%%%%%%%%%%%%%%%%%%%%%%%%%%%
%%%%%%%%%%%%%%%
\subsection{Approximate square roots and pseudodifferential weights}
\label{sec:square:root}

Introducing a parameter $\lam\geq 1$ we denote
\[
{\bar b}=\big({\bar q}+\lam \lr{\xi}^{-1}\big)^{1/2}
\]
so that $b=\lr{\xi}{\bar b}=(q+\lam \lr{\xi})^{1/2}$ where $\lam$ is  constrained
\begin{equation}
\label{eq:para:seigen}
\lam \leq \gamma M^{-2}, \qquad \lam\geq 1
\end{equation}
such that $\lam \lr{\xi}^{-1}\leq M^{-2}$.   In the end of this section $\lam $ will be fixed.
Introducing 
\begin{equation}
\label{eq:ome:teigi}
\omega=((t-\psi)^2+\lr{\xi}^{-1})^{1/2}
\end{equation}
where $\lr{\xi}^{-1/2}\leq \omega\leq CM^{-1}$ and taking \eqref{eq:q:sita:a} into account one has
\begin{equation}
\label{eq:b:sita}
\begin{split}
b=\big(q+\lam \lr{\xi}\big)^{1/2}\geq \big(c(t-\psi)^2\lr{\xi}^2+\lam \lr{\xi}\big)^{1/2}\\
\geq c\, \omega^{-1}\lr{\xi}\big((t-\psi)^2\omega^2+\omega^2\lr{\xi}^{-1}\big)^{1/2}\\
\geq c\, \omega^{-1}\lr{\xi}\big(|t-\psi|^4+\lr{\xi}^{-2}\big)^{1/2}\geq (c /\sqrt{2})\lr{\xi}\omega
\end{split}
\end{equation}
because $\omega^2\geq \lr{\xi}^{-1}$. Introduce the metric
\[
\bg=\lr{\xi}|dx|^2+\lr{\xi}^{-1}|d\xi|^2
\]
which is one of basic metrics with which we work. Note that  $
S(m, G)\subset S(m, \bg)$ 
because $M^{|\al+\be|}\lr{\xi}^{-|\be|}\leq (M^2\lr{\xi}^{-1})^{|\al+\be|/2}\lr{\xi}^{(|\al|-|\be|)/2}$ 
and $M^{2}\lr{\xi}^{-1}\leq 1$. The metric $\bg$ is $\sigma$ temperate  {\it uniformly} in $\gamma\geq M^4\geq 1$ which will be checked later.
\begin{lem}
\label{lem:b:lam:1}We have ${\bar b}\in S({\bar b}, \bg)$ and $
\dif_x^{\al}\dif_{\xi}^{\be}\,{\bar b} \in S(\lam^{-1/2}\lr{\xi}^{(|\al|-|\be|)/2}\,{\bar b}, {\bg})$ for $|\al+\be|=1$.
\end{lem}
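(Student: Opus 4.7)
The plan is to differentiate the defining identity $\bar b^2=\bar q+\lam\lr{\xi}^{-1}$ and argue by induction on the derivative order, keeping track of two pointwise facts. First, $\bar b^2\ge\lam\lr{\xi}^{-1}$ immediately gives the lower bound $\bar b\ge\lam^{1/2}\lr{\xi}^{-1/2}$, while $\bar q\in S(M^{-2},G)\subset S(M^{-2},\bg)$ and $\lam\lr{\xi}^{-1}\le M^{-2}$ yield the upper bound $\bar b\le CM^{-1}$. Second, since $\bar q\ge 0$, a Glaeser (Bony)-type inequality, which in the directions covered by Lemma \ref{lem:kihon:1} coincides with the estimates stated there, gives $|\dif_{x_j}\bar q|\le C\sqrt{\bar q}$ and $|\dif_{\xi_j}\bar q|\le C\sqrt{\bar q}\lr{\xi}^{-1}$; in view of $\sqrt{\bar q}\le\bar b$ these read $|\dif_{x_j}\bar q|\le C\bar b$ and $|\dif_{\xi_j}\bar q|\le C\bar b\lr{\xi}^{-1}$.

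For the refined bound at zeroth order (the $|\al+\be|=1$ part), I would divide the identity $2\bar b\,\dif\bar b=\dif\bar q+\lam\dif\lr{\xi}^{-1}$ by $2\bar b$. For $\dif_{x_j}$ only $\dif\bar q$ contributes and one obtains $|\dif_{x_j}\bar b|\le C/2$; for $\dif_{\xi_j}$ one gets $|\dif_{\xi_j}\bar b|\le C\lr{\xi}^{-1}/2+C\lam^{1/2}\lr{\xi}^{-3/2}/2$ after using the lower bound on $\bar b$ in the $\lam\lr{\xi}^{-1}$ term. In each case the lower bound $\bar b\ge\lam^{1/2}\lr{\xi}^{-1/2}$ rewrites the right-hand side in the desired form $C\lam^{-1/2}\bar b\,\lr{\xi}^{(|\al|-|\be|)/2}$.

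The full symbol-class memberships then follow by induction on $|\mu|$ using the Leibniz rearrangement
\[
2\bar b\,\dif^\mu\bar b=\dif^\mu(\bar q+\lam\lr{\xi}^{-1})-\sum_{0<\nu<\mu}\binom{\mu}{\nu}(\dif^\nu\bar b)(\dif^{\mu-\nu}\bar b).
\]
By the inductive hypothesis each factor $\dif^\nu\bar b$ with $|\nu|\ge 1$ is bounded by $\lam^{-1/2}\bar b$ times the appropriate $\bg$-weight, so the product sum is controlled by $C\lam^{-1}\bar b^2$ times the correct weight; division by $2\bar b$ together with the absorption $\lam^{-1}\le\lam^{-1/2}$ (valid since $\lam\ge 1$) places this piece in the claimed class. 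The lead term $\dif^\mu(\bar q+\lam\lr{\xi}^{-1})$ is estimated using the sharper $G$-class bound for $\bar q$ in place of the coarser $\bg$-class one; combined with $\bar b^2\ge\lam\lr{\xi}^{-1}$ and the constraints $\lr{\xi}\ge\gamma\ge M^{4}$ and $\lam\lr{\xi}^{-1}\le M^{-2}$, it yields $|\dif^\mu\bar b^2|\le C\lam^{-1/2}\bar b^2\,\lr{\xi}^{(|\mu_x|-|\mu_\xi|)/2}$. The main obstacle is exactly this bookkeeping among the four scales $M,\gamma,\lr{\xi},\lam$ needed to carry the $\lam^{-1/2}$ factor through the induction; the rest of the argument is routine manipulation of the Leibniz identity.
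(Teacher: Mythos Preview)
Your proposal is correct and follows essentially the same route as the paper: Glaeser's inequality on the nonnegative symbol $\bar q\in S(M^{-2},G)$ together with the lower bound $\bar b\ge\lam^{1/2}\lr{\xi}^{-1/2}$ gives the base case, and the Leibniz rearrangement of $\bar b^2=\bar q+\lam\lr{\xi}^{-1}$ drives the induction, with the $G$-class control $|\dif_x^{\al}\dif_{\xi}^{\be}\bar q|\precsim M^{-2+|\al+\be|}\lr{\xi}^{-|\be|}$ converted via $\bar b^2\lam^{-1}\lr{\xi}\ge 1$ and $M^2\lr{\xi}^{-1}\le 1$. The paper carries out exactly this bookkeeping explicitly; your sketch matches it.
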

%

%%%%%%%
%%%%%%

From this lemma it follows easily 
\begin{lem}
\label{lem:b:lam:-1}We have $
\dif_x^{\al}\dif_{\xi}^{\be}\,{\bar b}^{-1} \in S(\lam^{-1/2}\lr{\xi}^{(|\al|-|\be|)/2}\,{\bar b}^{-1},  \bg)$ for $|\al+\be|=1$.
\end{lem}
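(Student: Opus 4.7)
The plan is to use the identity $\partial_x^{\al}\partial_{\xi}^{\be}\,\bar b^{-1}=-\bar b^{-2}\,\partial_x^{\al}\partial_{\xi}^{\be}\bar b$ (valid for $|\al+\be|=1$) and to read off the claimed symbol-class membership from the Weyl calculus, with Lemma \ref{lem:b:lam:1} controlling the second factor. The whole proof reduces to checking the two factors on the right lie in the right symbol classes and then invoking the algebra property $S(m_1,\bg)\cdot S(m_2,\bg)\subset S(m_1m_2,\bg)$.

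First I would verify $\bar b^{-1}\in S(\bar b^{-1},\bg)$. Because $\bar b^{2}=\bar q+\lam\lr{\xi}^{-1}\ge \lam\lr{\xi}^{-1}>0$, the function $\bar b$ is smooth, strictly positive and bounded below, so composition with $F(s)=1/s$ is permissible. Faà di Bruno expresses $\partial_x^{\al}\partial_{\xi}^{\be}\bar b^{-1}$ as a finite sum of terms of the form $F^{(k)}(\bar b)\prod_i \partial^{\gamma_i}\bar b$ with $\sum_i \gamma_i=(\al,\be)$. Using $|F^{(k)}(\bar b)|\le C_k \bar b^{-k-1}$ together with the first part of Lemma \ref{lem:b:lam:1}, namely $|\partial^{\gamma_i}\bar b|\le C_{\gamma_i}\bar b\,\lr{\xi}^{(|\al_i|-|\be_i|)/2}$, each term is bounded by $C_{\al\be}\bar b^{-1}\lr{\xi}^{(|\al|-|\be|)/2}$, which is exactly the membership $\bar b^{-1}\in S(\bar b^{-1},\bg)$. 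The algebra property then gives $\bar b^{-2}=\bar b^{-1}\cdot\bar b^{-1}\in S(\bar b^{-2},\bg)$.

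Next, the second part of Lemma \ref{lem:b:lam:1} supplies
\[
\partial_x^{\al}\partial_{\xi}^{\be}\bar b\in S\bigl(\lam^{-1/2}\lr{\xi}^{(|\al|-|\be|)/2}\bar b,\bg\bigr)\quad\text{for }|\al+\be|=1,
\]
and multiplying the two memberships yields
\[
\partial_x^{\al}\partial_{\xi}^{\be}\bar b^{-1}=-\bar b^{-2}\,\partial_x^{\al}\partial_{\xi}^{\be}\bar b\in S\bigl(\lam^{-1/2}\lr{\xi}^{(|\al|-|\be|)/2}\bar b^{-1},\bg\bigr),
\]
which is the statement of the lemma.

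The only step that is not mechanical is the preliminary one, namely verifying $\bar b^{-1}\in S(\bar b^{-1},\bg)$, and even this is a routine Faà di Bruno calculation for smooth functions of a positive symbol bounded away from zero; no quantitative loss in $\lam$ arises there because the weight in the composition is simply $\bar b^{-1}$. Once this is in hand, the claim reduces to a one-line product in the symbol calculus, which is presumably why the text says \emph{it follows easily}.
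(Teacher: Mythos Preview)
Your argument is correct and is precisely the ``follows easily'' that the paper leaves implicit: differentiate $\bar b^{-1}$, use $\bar b\in S(\bar b,\bg)$ from Lemma~\ref{lem:b:lam:1} to get $\bar b^{-2}\in S(\bar b^{-2},\bg)$, and multiply by $\dif_x^{\al}\dif_{\xi}^{\be}\bar b\in S(\lam^{-1/2}\lr{\xi}^{(|\al|-|\be|)/2}\bar b,\bg)$ from the second part of the same lemma. There is no alternative route intended by the paper; you have simply written out the details.
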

\begin{lem}
\label{lem:b:2}We have $\dif_x^{\al}\dif_{\xi}^{\be}\,{\bar b}\in S(\lr{\xi}^{-|\be|}, \bg)$ for $|\al+\be|=1$.
\end{lem}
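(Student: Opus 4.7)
The plan is to work from the identity $2\bar{b}\,\partial_j\bar{b}=\partial_j(\bar{q}+\lam\lr{\xi}^{-1})$ (with $\partial_j\in\{\partial_{x_j},\partial_{\xi_j}\}$), combined with the lower bound $\bar{b}\geq\max(\sqrt{\bar{q}},\sqrt{\lam}\lr{\xi}^{-1/2})\geq\lr{\xi}^{-1/2}$ that follows from $\lam\geq 1$. The key additional inputs are Lemma \ref{lem:kihon:1} (refined bounds on derivatives of $\bar{q}$ using its non-negativity) and Lemma \ref{lem:b:lam:-1} (symbol-class control of $1/\bar{b}$). The natural gain from Lemma \ref{lem:b:lam:1} alone is insufficient because its $\lam^{-1/2}$ factor may be large.

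First I would verify the pointwise bound. For the $x$-derivative, $\partial_{x_j}\bar{b}=\partial_{x_j}\bar{q}/(2\bar{b})$; Lemma \ref{lem:kihon:1} applied to the relation $q=\bar{q}\lr{\xi}^2$ gives $|\partial_{x_j}\bar{q}|\lesssim\sqrt{\bar{q}}$, and $\bar{b}\geq\sqrt{\bar{q}}$ yields $|\partial_{x_j}\bar{b}|\lesssim 1$, as needed for $S(1,\bg)$. For the $\xi$-derivative, $\partial_{\xi_j}\bar{b}=[\partial_{\xi_j}\bar{q}-\lam\xi_j\lr{\xi}^{-3}]/(2\bar{b})$; the chain rule through $\eta(\xi)$ combined with $|\partial_\xi\eta|\lesssim\lr{\xi}^{-1}$ and the non-negativity bound on $\bar{q}$ give $|\partial_{\xi_j}\bar{q}|\lesssim\sqrt{\bar{q}}\lr{\xi}^{-1}$, so that contribution is $\lesssim\lr{\xi}^{-1}$. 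The remainder is bounded by $\lam\lr{\xi}^{-2}/(2\sqrt{\lam\lr{\xi}^{-1}})=\sqrt{\lam}\lr{\xi}^{-3/2}/2$, and the inequality $\lam\leq\gamma\leq\lr{\xi}$ (from \eqref{eq:para:seigen} together with $\lr{\xi}\geq\gamma$) upgrades this to $\lesssim\lr{\xi}^{-1}$, as required for $S(\lr{\xi}^{-1},\bg)$.

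For the full symbol-class statement I would apply $\partial_x^{\al'}\partial_\xi^{\be'}$ to $\partial_j\bar{b}=[\partial_j\bar{q}+\lam\partial_j\lr{\xi}^{-1}]/(2\bar{b})$ and expand by Leibniz. Each resulting term is a product of higher derivatives of $\bar{q}$ (controlled via $\bar{q}\in S(M^{-2},G)\subset S(1,\bg)$), explicit derivatives of $\lam\lr{\xi}^{-1}$ (bounded by $\lam\lr{\xi}^{-1-|\be'|}$), and derivatives of $1/\bar{b}$ (controlled by Lemma \ref{lem:b:lam:-1}, with any residual power of $1/\bar{b}$ absorbed using $1/\bar{b}\leq\lr{\xi}^{1/2}$). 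A term-by-term count of $\lr{\xi}$-powers then yields the claimed weight $\lr{\xi}^{-|\be|}$ in the $\bg$-class, uniformly in $M,\gamma,\lam$. The main obstacle is the bookkeeping: the weight $\lr{\xi}^{-|\be|}$ offers no $\lam^{-1/2}$ slack, so the vanishing estimate $|\partial\bar{q}|\lesssim\sqrt{\bar{q}}$ (cancelling one power of $1/\bar{b}$ via $\bar{b}\geq\sqrt{\bar{q}}$) and the lower bound $\bar{b}\geq\lr{\xi}^{-1/2}$ must both be used, and the former refinement is only available for a single derivative falling on $\bar{q}$, so one has to check that when many derivatives land on $\bar{q}$, the crude estimate $|\partial^\gamma\bar{q}|\lesssim 1\cdot(\text{$\bg$-weight})$ is still enough when paired with enough derivatives of $1/\bar{b}$ estimated through Lemma \ref{lem:b:lam:-1}.
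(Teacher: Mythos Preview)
Your approach is correct and essentially matches the paper's: represent $\dif_x^{\al}\dif_{\xi}^{\be}{\bar b}=(\dif_x^{\al}\dif_{\xi}^{\be}{\bar q}+\lam\,\dif_x^{\al}\dif_{\xi}^{\be}\lr{\xi}^{-1})/(2{\bar b})$, control the leading term via the Glaeser bound $|\dif_x^{\al}\dif_{\xi}^{\be}{\bar q}|\precsim \lr{\xi}^{-|\be|}\sqrt{{\bar q}}\leq \lr{\xi}^{-|\be|}{\bar b}$, handle higher derivatives of ${\bar q}$ using ${\bar q}\in S(M^{-2},G)$ together with $M^2\lr{\xi}^{-1}\leq 1$ and ${\bar b}^{-1}\leq \lr{\xi}^{1/2}$, and divide by ${\bar b}$ using ${\bar b}^{-1}\in S({\bar b}^{-1},\bg)$ (which follows from Lemma~\ref{lem:b:lam:1}, not the refined Lemma~\ref{lem:b:lam:-1}). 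One minor point: the first-derivative input you need is the general Glaeser inequality \eqref{eq:dif:q:one} rather than Lemma~\ref{lem:kihon:1}, which only records the sharpened $M^{-1/2}$ gains in selected directions for one of the two coordinate choices.
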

\begin{prop}
\label{pro:lam:1}${ b}$ is an admissible weight for ${\bg}$ and ${ b}\in S({ b},  { \bg})$. 
\end{prop}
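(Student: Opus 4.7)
My plan is to write $b=\lr{\xi}\,\bar b$ and derive both assertions from Lemma~\ref{lem:b:lam:1} combined with the elementary fact that $\lr{\xi}$ is itself an admissible weight for $\bg$.

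First I would verify by direct computation that $\lr{\xi}\in S(\lr{\xi},\bg)$ and that $\lr{\xi}$ is a $\bg$-admissible weight. The bounds $|\dif_\xi^\be\lr{\xi}|\leq C\lr{\xi}^{1-|\be|}$ together with $\bg(\dif_{\xi_j})=\lr{\xi}^{-1}$ give the symbol estimate. For admissibility, I would use that $\bg^\sigma=\bg$ (the $x$- and $\xi$-coefficients being reciprocal) and deduce slow variation and $\sigma$-temperance from $|\eta-\xi|\leq \lr{\xi}^{1/2}\bg_X(Y-X)^{1/2}$, which yields $\lr{\eta}/\lr{\xi}\leq C(1+\bg_X(Y-X))$.

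Next, Lemma~\ref{lem:b:lam:1} already gives $\bar b\in S(\bar b,\bg)$. Leibniz then produces
\[
b=\lr{\xi}\,\bar b\in S(\lr{\xi}\cdot\bar b,\bg)=S(b,\bg),
\]
which is the asserted symbol estimate. Admissibility of $b$ follows because a product of $\bg$-admissible weights is again a $\bg$-admissible weight; here $\lr{\xi}$ was handled just above, while the admissibility of $\bar b$ is built into $\bar b\in S(\bar b,\bg)$ provided by Lemma~\ref{lem:b:lam:1}.

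The genuinely delicate ingredient is the $\sigma$-temperance of $\bar b$ hidden in Lemma~\ref{lem:b:lam:1}: straight-line integration of $|(\log\bar b)'|\precsim \lam^{-1/2}\bg^{1/2}$ only produces an exponential bound in $\bg$-distance, so to obtain a polynomial one one must combine the two-sided estimate $\sqrt{\lam}\,\lr{\xi}^{-1/2}\leq \bar b\leq CM^{-1}$ with the $G$-control $\bar q\in S(M^{-2},G)$, which prevents $\bar q$ from jumping by $M^{-2}$ unless the $\bg$-displacement is of order $M^{-1}\lr{\xi}^{1/2}$ or more. This forced coupling between the size of $\bar b$ and the $\bg$-distance is the main obstacle, and it is what buys the polynomial temperance.
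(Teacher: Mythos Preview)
There is a genuine gap. Lemma~\ref{lem:b:lam:1} asserts only the differential estimate $\bar b\in S(\bar b,\bg)$; it says nothing about $\bar b$ being $\sigma,\bg$-temperate, and in general $p\in S(p,g)$ does not make $p$ an admissible weight (take $p(x)=e^{x}$ with $g=|dx|^2$). So the sentence ``the admissibility of $\bar b$ is built into $\bar b\in S(\bar b,\bg)$ provided by Lemma~\ref{lem:b:lam:1}'' is precisely where your argument fails: admissibility of $\bar b$ is the new content of Proposition~\ref{pro:lam:1} and must be proved here. Your last paragraph senses the difficulty, but it still attributes the temperance to Lemma~\ref{lem:b:lam:1}, and the logarithmic bound $|(\log\bar b)'|\precsim\lam^{-1/2}\bg^{1/2}$ you extract from that lemma is, as you yourself note, too weak to yield a polynomial estimate.

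The paper's argument does not use Lemma~\ref{lem:b:lam:1} for this step at all; the key input is the \emph{absolute} first-order bound $|\dif_x^{\al}\dif_{\xi}^{\be}\bar b|\precsim\lr{\xi}^{-|\be|}$ for $|\al+\be|=1$, coming from Glaeser's inequality \eqref{eq:dif:q:one} (morally Lemma~\ref{lem:b:2}). With that in hand one splits on $|\eta|$: if $|\eta|\leq c\lr{\xi}$ a mean-value estimate gives $|\bar b(z+w)-\bar b(z)|\leq C(|y|+\lr{\xi}^{-1}|\eta|)\leq C\lr{\xi}^{-1/2}\bg_z^{1/2}(w)\leq C\bar b(z)\bg_z^{1/2}(w)$, the last step using only the lower bound $\bar b\geq\lr{\xi}^{-1/2}$; if $|\eta|\geq c\lr{\xi}$ then $\bg_z(w)\geq c^2\lr{\xi}$ and the crude two-sided bound $\lr{\xi}^{-1/2}\leq\bar b\leq C$ already gives $\bar b(z+w)\leq C\bar b(z)(1+\bg_z(w))^{1/2}$. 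Since $\bg^{\sigma}=\bg$, this $\bg$-continuity is simultaneously $\sigma$-temperance (cf.~\eqref{eq:g:sei:3}), and $\bar b$ is admissible. Your factorization $b=\lr{\xi}\bar b$ and the Leibniz step for $b\in S(b,\bg)$ are fine; only the admissibility needs the argument just described.
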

%

%%%%%
%%%%%
Since $b$ and $b^{-1}$ are admissible weights for $\bg$  we have
\[
b\#b^{-1}=1-r
\]
where $r\in S(\lam^{-1}, \bg)$ which follows from Lemmas \ref{lem:b:lam:1} and \ref{lem:b:lam:-1}. Therefore choosing $\lam\geq 1$ suitably large we have $\|\op{r}\|_{{\mathcal L}(L^2, L^2)}<1$ so that $(I-\op{r})^{-1}$ exists which is given by $(I-\op{r})^{-1}=\op{{\tilde r}}$ with ${\tilde r}\in S(1, \bg)$ (see \cite{Be}, \cite{Ku}). Thus we have $b\#(b^{-1}\#{\tilde r})=1$ and $(b^{-1}\#{\tilde r})\#b=1$ where ${\tilde b}=b^{-1}\#{\tilde r}\in S(b^{-1}, \bg)$. 
 We summarize
\begin{prop}
\label{pro:b:lam}One can find $\lam\geq 1$ independent of $M$ and $\gamma$  such that there exists ${\tilde b}\in S(b^{-1}, \bg)$ satisfying $b\#{\tilde b}={\tilde b}\#b=1$.
\end{prop}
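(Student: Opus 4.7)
The plan is to construct $\tilde{b}$ as a Weyl parametrix by a Neumann series argument, essentially formalizing the displayed computation just preceding the statement. First, apply the Weyl composition formula to $b\#b^{-1}$. Since $b\in S(b,\bg)$ by Proposition \ref{pro:lam:1} and similarly $b^{-1}\in S(b^{-1},\bg)$, the principal term of the expansion is the pointwise product $b\cdot b^{-1}=1$, and each subsequent term pairs derivatives of $b$ with derivatives of $b^{-1}$. By Lemmas \ref{lem:b:lam:1} and \ref{lem:b:lam:-1}, every first derivative of $b$ (respectively $b^{-1}$) carries an extra factor $\lam^{-1/2}$ relative to $b$ (respectively $b^{-1}$), so the $k$-th term in the asymptotic expansion is gained with a factor $\lam^{-k}$. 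Hence $b\# b^{-1}=1-r$ with $r\in S(\lam^{-1},\bg)$, uniformly in $M$ and $\gamma$.

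Next, invoke the $L^2$ boundedness theorem in the Weyl calculus associated with the $\sigma$-temperate metric $\bg$: the operator norm $\|\op{r}\|_{\mathcal{L}(L^2,L^2)}$ is controlled by finitely many $\bg$-seminorms of $r$, so $\|\op{r}\|_{\mathcal{L}(L^2,L^2)}\leq C\lam^{-1}$ with $C$ independent of $M$ and $\gamma$. Fix $\lam\geq 1$ so large that this norm is $<1$; the resulting $\lam$ is absolute. Then $I-\op{r}$ is invertible on $L^2$ and, by the Beals-type characterization of Weyl operators with symbols in $S(1,\bg)$ (the calculus results cited from \cite{Be}, \cite{Ku}), $(I-\op{r})^{-1}=\op{\tilde r}$ for some $\tilde r\in S(1,\bg)$.

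Set $\tilde b=b^{-1}\#\tilde r$. The calculus places $\tilde b\in S(b^{-1},\bg)$, and associativity yields the right identity $b\#\tilde b=(b\# b^{-1})\#\tilde r=(1-r)\#\tilde r=1$. For the left identity, run the parallel computation $b^{-1}\# b=1-r'$ with $r'\in S(\lam^{-1},\bg)$ and produce a left parametrix $\tilde b_L=\tilde r'\# b^{-1}$ satisfying $\tilde b_L\# b=1$; the standard identity $\tilde b_L=\tilde b_L\#(b\#\tilde b)=(\tilde b_L\# b)\#\tilde b=\tilde b$ then forces $\tilde b_L=\tilde b$, giving the two-sided inverse.

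The one point requiring care is that every constant above — the symbol seminorms bounding $r$, the $L^2$ operator norm, and Beals' reconstruction of $\tilde r$ as an element of $S(1,\bg)$ — must be independent of $M$ and $\gamma$, so that the choice of $\lam$ can be made absolute. This in turn reduces to the uniform $\sigma$-temperance of $\bg$ in the regime $\gamma\geq M^4\geq 1$, which the paper flags as still to be verified; granted that, the rest of the argument is routine calculus manipulation.
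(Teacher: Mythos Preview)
Your argument is correct and matches the paper's own proof, which is given in the paragraph immediately preceding the proposition: compute $b\#b^{-1}=1-r$ with $r\in S(\lam^{-1},\bg)$ via Lemmas \ref{lem:b:lam:1} and \ref{lem:b:lam:-1}, choose $\lam$ large to make $\op{r}$ a strict $L^2$ contraction, invert by Beals' theorem to get $\tilde r\in S(1,\bg)$, and set $\tilde b=b^{-1}\#\tilde r$. The only cosmetic difference is that the paper asserts the two-sided identity directly from the operator-level invertibility of $I-\op{r}$, while you spell out the left-inverse construction and the standard $\tilde b_L=\tilde b$ argument; your caution about uniformity in $M,\gamma$ is also well placed and is discharged in Section~\ref{sec:proof:lemma}.
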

From now on we {\it fix} such a $\lambda={\bar \lam}$ 
while $M$ and $\gamma$ remain to be free with the constraints \eqref{eq:seigen} and \eqref{eq:para:seigen}.
 \begin{lem}
 \label{lem:dif:q} We have ${\bar q}\in S(\lr{\xi}^{-1/2}\,{\bar b}, \bg)$. Moreover $\dif_{x_1}{\bar q}\in S(M^{-1/2}\,{\bar b}, \bg)$, $(a)$ and $\dif_{\xi_j}{\bar q}\in S(M^{-1/2}\lr{\xi}^{-1}\,{\bar b}, \bg)$ for $j=1, d$, $(b)$.
 \end{lem}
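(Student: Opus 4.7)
The plan is to deduce everything from Lemma \ref{lem:kihon:1} (which gives the pointwise derivative bounds on $q_M=\bar q\lr{\xi}^2$) using only the identity $q_M=\bar q\lr{\xi}^2$ and the elementary inequality $\sqrt{\bar q}\leq\bar b$ coming from $\bar b^{\,2}=\bar q+\bar\lam\lr{\xi}^{-1}\geq\bar q$. No chain-rule expansion through $\eta$ is needed for the first-derivative claims; the direct algebraic expansion of $\dif q_M$ in terms of $\dif\bar q$ already absorbs the necessary factors.

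For the $x_1$-derivative in case $(a)$: since $\lr{\xi}$ is independent of $x$, $\dif_{x_1}q_M=(\dif_{x_1}\bar q)\lr{\xi}^2$, so Lemma \ref{lem:kihon:1}$(a)$'s bound $|\dif_{x_1}q_M|\leq CM^{-1/2}\sqrt{q_M}\lr{\xi}=CM^{-1/2}\sqrt{\bar q}\lr{\xi}^2$ yields $|\dif_{x_1}\bar q|\leq CM^{-1/2}\sqrt{\bar q}\leq CM^{-1/2}\bar b$. For the $\xi_j$-derivative ($j=1,d$) in case $(b)$, $\dif_{\xi_j}q_M=(\dif_{\xi_j}\bar q)\lr{\xi}^2+2\xi_j\bar q$, so
\[
|\dif_{\xi_j}\bar q|\lr{\xi}^2\leq |\dif_{\xi_j}q_M|+2|\xi_j|\bar q\leq CM^{-1/2}\sqrt{\bar q}\lr{\xi}+2\lr{\xi}\sqrt{\bar q}\,\bar b,
\]
and using $\sqrt{\bar q}\leq\bar b$ on the first term and $\sqrt{\bar q}\leq CM^{-1}\leq CM^{-1/2}$ (from $\bar q\in S(M^{-2},G)$) on the second gives $|\dif_{\xi_j}\bar q|\leq CM^{-1/2}\lr{\xi}^{-1}\bar b$. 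For the principal estimate on $\bar q$ itself, $\bar q=\sqrt{\bar q}\cdot\sqrt{\bar q}\leq\bar b\sqrt{\bar q}$, and the remaining $\sqrt{\bar q}$ is bounded by the claimed weight using $\bar q\in S(M^{-2},G)$ together with the constraint $\lr{\xi}\geq\gamma\geq M^4$.

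To upgrade these pointwise bounds to full $S(m,\bg)$ membership, I would differentiate the identity $q_M=\bar q\lr{\xi}^2$ to all orders and iterate the argument, using Leibniz together with the $G$-estimate $q_M\in S(M^{-2}\lr{\xi}^2,G)$ (equivalently $\bar q\in S(M^{-2},G)$) to control the auxiliary terms. The inclusion $S(m,G)\subset S(m,\bg)$ noted before Lemma \ref{lem:b:lam:1} converts $G$-bounds into $\bg$-bounds without loss, and the constraint $\gamma\geq M^4$ absorbs surplus powers of $M$ into the $\lr{\xi}^{(|\al_x|-|\al_\xi|)/2}$ factors of $\bg$. The main technical obstacle is keeping the $\bar b$-factor sharp at each derivative order while preserving the $M^{-1/2}$ improvement encoded in Lemma \ref{lem:kihon:1} (coming from $\dif_{x_1}^2q(0,e_d)=0$ in (a) and $\dif_{\xi_1}^2q(0,e_d)=0$ together with the Euler relation in (b)); this requires the analogous higher-order Glaeser-type bounds on $\dif^\al q_M$, which can be obtained by combining Lemma \ref{lem:kihon:1} with the homogeneity of $q$ and the regularity available from Proposition \ref{pro:p:to:psi}.
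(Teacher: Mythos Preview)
Your first-derivative arguments are correct and essentially reproduce the paper's use of Lemma~\ref{lem:kihon:1}; the paper works directly with $\bar q$ rather than passing through $q_M=\bar q\lr{\xi}^2$, but that is cosmetic. Two points, however, need correction.

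First, your zeroth-order argument for $\bar q$ has the inequality backwards. From $\sqrt{\bar q}\leq CM^{-1}$ and $\lr{\xi}\geq\gamma\geq M^{4}$ you obtain $\lr{\xi}^{-1/2}\leq M^{-2}$, so $\sqrt{\bar q}$ is \emph{larger} than $\lr{\xi}^{-1/2}$, not smaller; the bound $\sqrt{\bar q}\leq C\lr{\xi}^{-1/2}$ you invoke is false. (The paper's own proof in fact begins at $|\al+\be|=1$ and never addresses the order-zero estimate, which is all that is used downstream.)

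Second, and this is the main gap, your plan for the higher orders misidentifies the difficulty. There are no ``higher-order Glaeser-type bounds'' available, and none are needed. The paper's argument is elementary: for $|\al+\be|\geq 2$ one uses only the crude bound $|\dif_x^{\al}\dif_{\xi}^{\be}\bar q|\precsim M^{-2+|\al+\be|}\lr{\xi}^{-|\be|}$ coming from $\bar q\in S(M^{-2},G)$, together with the lower bound $\bar b\geq{\bar\lam}^{1/2}\lr{\xi}^{-1/2}$ and $M^{2}\lr{\xi}^{-1}\leq 1$, to get
\[
M^{-2+|\al+\be|}\lr{\xi}^{-|\be|}\precsim \lr{\xi}^{-1}\big(M^{2}\lr{\xi}^{-1}\big)^{(|\al+\be|-2)/2}\lr{\xi}^{(|\al|-|\be|)/2}\precsim \lr{\xi}^{-1/2}\,\bar b\,\lr{\xi}^{(|\al|-|\be|)/2}.
\]
For the ``moreover'' part one first observes, by inspecting the Taylor expansion in the proof of Lemma~\ref{lem:kihon:1} together with Lemma~\ref{lem:eta:seimitu:a}, that $\dif_{x_1}\bar q\in S(M^{-2},G)$ in case $(a)$ and $\dif_{\xi_j}\bar q\in S(M^{-2}\lr{\xi}^{-1},G)$ for $j=1,d$ in case $(b)$; then the same conversion gives the bound $M^{-1}\bar b\lr{\xi}^{(|\al|-|\be|)/2}$ (even better than $M^{-1/2}$) for every $|\al+\be|\geq 1$. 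The sharp factor $\sqrt{\bar q}$ is only needed at the bottom order, where Glaeser applies; beyond that $\bar b$ enters merely through its trivial lower bound $\lr{\xi}^{-1/2}$, so no iteration or Leibniz bookkeeping is required.
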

\begin{cor}
\label{cor:lam:2}We have $\dif_{x_1}{\bar b}\in S(M^{-1/2}, \bg)$, $(a)$ and  $\dif_{\xi_j}{\bar b}\in S(M^{-1/2}\lr{\xi}^{-1}, \bg)$ for $j=1, d$, $(b)$.
\end{cor}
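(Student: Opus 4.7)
The plan is to differentiate the explicit expression $\bar b=(\bar q+\lam\lr{\xi}^{-1})^{1/2}$ and express each first derivative as a product of $\bar b^{-1}$ with a derivative of $\bar q+\lam\lr{\xi}^{-1}$, then invoke Lemmas \ref{lem:dif:q} and \ref{lem:b:lam:-1} together with the product rule in the symbol classes $S(m,\bg)$.

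First, note that $\bar b^{-1}\in S(\bar b^{-1},\bg)$: Lemma \ref{lem:b:lam:-1} gives the first-order estimate (and actually a gain $\lam^{-1/2}$, which we do not need here), and higher-order estimates follow from the admissibility of $\bar b$ as a weight for $\bg$ (inherited from Proposition \ref{pro:lam:1}, since $\bar b=b\lr{\xi}^{-1}$ and $\lr{\xi}^{-1}$ is admissible for $\bg$). Next, for case $(a)$, since $\lam\lr{\xi}^{-1}$ is $x$-independent,

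\[
\dif_{x_1}\bar b=\frac{\dif_{x_1}\bar q}{2\bar b}=\tfrac{1}{2}\bar b^{-1}\dif_{x_1}\bar q.
\]

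By Lemma \ref{lem:dif:q} we have $\dif_{x_1}\bar q\in S(M^{-1/2}\bar b,\bg)$ in case $(a)$, and multiplying by $\bar b^{-1}\in S(\bar b^{-1},\bg)$ yields $\dif_{x_1}\bar b\in S(M^{-1/2},\bg)$, as claimed.

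For case $(b)$,

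\[
\dif_{\xi_j}\bar b=\tfrac{1}{2}\bar b^{-1}\bigl(\dif_{\xi_j}\bar q-\lam\xi_j\lr{\xi}^{-3}\bigr),\qquad j=1,d.
\]

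By Lemma \ref{lem:dif:q}, $\dif_{\xi_j}\bar q\in S(M^{-1/2}\lr{\xi}^{-1}\bar b,\bg)$, so the first term lies in $S(M^{-1/2}\lr{\xi}^{-1},\bg)$ after multiplication by $\bar b^{-1}$. For the second term, $\xi_j\lr{\xi}^{-3}\in S(\lr{\xi}^{-2},\bg)$, and since $\bar b^{-2}\geq \lam\lr{\xi}^{-1}$ we have $\bar b^{-1}\leq \lam^{-1/2}\lr{\xi}^{1/2}$, so $\lam\bar b^{-1}\xi_j\lr{\xi}^{-3}$ is a symbol in $S(\lr{\xi}^{-3/2},\bg)$. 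Finally, from the constraint \eqref{eq:seigen} we have $\lr{\xi}\geq\gamma\geq M^4$, hence $\lr{\xi}^{-1/2}\leq M^{-2}\leq M^{-1/2}$, and therefore $S(\lr{\xi}^{-3/2},\bg)\subset S(M^{-1/2}\lr{\xi}^{-1},\bg)$. Combining the two contributions gives $\dif_{\xi_j}\bar b\in S(M^{-1/2}\lr{\xi}^{-1},\bg)$ for $j=1,d$ in case $(b)$.

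There is no genuine obstacle: the argument is a direct application of the product rule for symbols together with the already established Lemmas \ref{lem:dif:q} and \ref{lem:b:lam:-1}. The only point requiring attention is the extra term $\dif_{\xi_j}(\lam\lr{\xi}^{-1})$ in case $(b)$, which is absorbed using the parameter constraint $\gamma\geq M^4$ from \eqref{eq:seigen}.
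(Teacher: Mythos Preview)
Your proof is correct and follows the same route as the paper, which states the corollary without proof as an immediate consequence of Lemma~\ref{lem:dif:q}: write $\dif\bar b=(\dif\bar q+\lam\,\dif\lr{\xi}^{-1})/(2\bar b)$ and multiply the symbol estimate for $\dif\bar q$ by $\bar b^{-1}\in S(\bar b^{-1},\bg)$. The only minor remark is that your treatment of the extra term $\lam\bar b^{-1}\dif_{\xi_j}\lr{\xi}^{-1}$ via $\lr{\xi}^{-1/2}\leq M^{-2}$ can be replaced by the slightly more direct bound $\lam\lr{\xi}^{-1}\leq \bar b^2$ together with $\bar b\leq CM^{-1}$, giving $\lam\bar b^{-1}\lr{\xi}^{-2}\leq \bar b\,\lr{\xi}^{-1}\leq CM^{-1}\lr{\xi}^{-1}$; this is how the analogous term is handled in the proof of Lemma~\ref{lem:b:2} and does not require invoking \eqref{eq:seigen}.
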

\begin{cor}
\label{cor:b:tbibun}We have $\dif_t{\bar b}\in S(1, \bg)$. Moreover $\dif_{x_1}\dif_t{\bar b}\in S(M^{-1/2}\lr{\xi}^{1/2}, \bg)$, $(a)$ and $\dif_{\xi_j}\dif_t{\bar b}\in S(M^{-1/2}\lr{\xi}^{-1/2}, \bg)$ for $j=1, d$, $(b)$.
\end{cor}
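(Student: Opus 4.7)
The starting identity is obtained by $t$-differentiation of $\bar b^2=\bar q+\lam\lr{\xi}^{-1}$; since $\lr{\xi}^{-1}$ is $t$-independent,
\[
\dif_t\bar b=\frac{\dif_t\bar q}{2\bar b}.
\]
The plan is to establish $\dif_t\bar q\in S(\bar b,\bg)$ and then to invoke $\bar b^{-1}\in S(\bar b^{-1},\bg)$ (Lemma \ref{lem:b:lam:-1}) to conclude $\dif_t\bar b\in S(\bar b\cdot\bar b^{-1},\bg)=S(1,\bg)$.

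Since $\bar q\geq 0$ is smooth with $|\dif_t^2\bar q|\leq C$ uniformly in $(t,x,\xi)$ (because $\bar q$ is a composition of the smooth function $q$ with the bounded localization $(y(x),\eta(\xi)+e_d)$), Glaeser's inequality in the $t$-variable gives
\[
|\dif_t\bar q|^2\leq 2\|\dif_t^2\bar q\|_\infty\,\bar q\leq C\bar q\leq C\bar b^2,
\]
which is the $|\al+\be|=0$ case of the desired bound. For $|\al+\be|\geq 1$, the uniform $t$-smoothness yields $\dif_t\bar q\in S(M^{-2},G)\subset S(M^{-2},\bg)$, so that
\[
|\dif_x^\al\dif_\xi^\be\dif_t\bar q|\leq C_{\al\be}M^{-2+|\al+\be|}\lr{\xi}^{-|\be|}.
\]
The required comparison $M^{-2+|\al+\be|}\lr{\xi}^{-|\be|}\leq C\bar b\,\lr{\xi}^{(|\al|-|\be|)/2}$ reduces to $M^{-2+|\al+\be|}\leq C\bar b\,\lr{\xi}^{(|\al|+|\be|)/2}$ and follows from $\bar b\geq\sqrt{\bar\lam}\,\lr{\xi}^{-1/2}$ combined with $\lr{\xi}\geq\gamma\geq M^{4}$ (eq.~\eqref{eq:seigen}). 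This completes $\dif_t\bar q\in S(\bar b,\bg)$, hence $\dif_t\bar b\in S(1,\bg)$.

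For the refined statements, I differentiate $2\bar b\,\dif_t\bar b=\dif_t\bar q$ once more by Leibniz. In case $(a)$,
\[
2\bar b\,\dif_{x_1}\dif_t\bar b=\dif_{x_1}\dif_t\bar q-2(\dif_{x_1}\bar b)\,\dif_t\bar b.
\]
The same exponent check applied to $|\dif_x^\al\dif_\xi^\be\dif_{x_1}\dif_t\bar q|\leq C_{\al\be}M^{-1+|\al+\be|}\lr{\xi}^{-|\be|}$ yields $\dif_{x_1}\dif_t\bar q\in S(M^{-1/2}\bar b\,\lr{\xi}^{1/2},\bg)$: the factor $M^{-1}$ is split as $M^{-1/2}\cdot M^{-1/2}$ and the second $M^{-1/2}$ is absorbed into $\bar b\,\lr{\xi}^{1/2}\geq c>0$. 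The second summand lies in $S(M^{-1/2},\bg)$ by Corollary \ref{cor:lam:2} ($\dif_{x_1}\bar b\in S(M^{-1/2},\bg)$) together with $\dif_t\bar b\in S(1,\bg)$. Multiplying by $(2\bar b)^{-1}\in S(\bar b^{-1},\bg)$ and using $\bar b^{-1}\leq c\,\lr{\xi}^{1/2}$ gives $\dif_{x_1}\dif_t\bar b\in S(M^{-1/2}\lr{\xi}^{1/2},\bg)$. Case $(b)$ is entirely parallel, with $\dif_{\xi_j}$ ($j=1,d$) replacing $\dif_{x_1}$; the $\bg$-scaling in the $\xi$-direction now produces $\lr{\xi}^{-1/2}$ in place of $\lr{\xi}^{1/2}$, yielding $S(M^{-1/2}\lr{\xi}^{-1/2},\bg)$.

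The main obstacle is the symbol-class upgrade in the first step: Glaeser alone supplies only the zeroth-order pointwise bound, and one must combine the crude $S(M^{-2},G)$ control on $\dif_t\bar q$ with the a priori lower bound $\bar b\geq c\,\lr{\xi}^{-1/2}$ and the quantitative constraint $\gamma\geq M^{4}$ to verify that every higher $(x,\xi)$-derivative satisfies the $\bg$-metric inequality. Once $\dif_t\bar q\in S(\bar b,\bg)$ is secured, the remainder is routine Leibniz algebra within the Weyl calculus, and the $M^{-1/2}$ gains in cases $(a)$ and $(b)$ come out automatically from the single extra differentiation in $x_1$ or $\xi_j$.
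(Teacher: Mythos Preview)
Your overall strategy matches the paper's: establish $\dif_t\bar q\in S(\bar b,\bg)$, divide by $2\bar b$, then differentiate once more and use Corollary~\ref{cor:lam:2}. However, there is a genuine gap in your justification of the derivative bounds on $\dif_t\bar q$.

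The claim ``uniform $t$-smoothness yields $\dif_t\bar q\in S(M^{-2},G)$'' is false. Since $q\geq 0$ vanishes to second order at $\rho=(0,0,e_d)$, the function $\dif_t\tilde q$ (with $\tilde q(t,y,\eta)=q(t,y,\eta+e_d)$) vanishes only to \emph{first} order there: the second derivatives $\dif_t\dif_{y_j}\tilde q(0,0,0)$, $\dif_t\dif_{\eta_j}\tilde q(0,0,0)$, $\dif_t^2\tilde q(0,0,0)$ are generically nonzero. Hence Lemma~\ref{lem:kakutyo:a} gives only $\dif_t\bar q\in S(M^{-1},G)$, i.e.\ $|\dif_x^{\al}\dif_{\xi}^{\be}\dif_t\bar q|\leq C M^{-1+|\al+\be|}\lr{\xi}^{-|\be|}$. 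Your comparison with $\bar b\,\lr{\xi}^{(|\al|-|\be|)/2}$ still goes through with this weaker exponent (using $\bar b\geq c\lr{\xi}^{-1/2}$ and $M^2\leq\lr{\xi}$), so the first assertion survives.

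The second assertion does not. From $\dif_t\bar q\in S(M^{-1},G)$ you only obtain $\dif_{x_1}\dif_t\bar q\in S(1,G)$, and then the zeroth-order bound $|\dif_{x_1}\dif_t\bar q|\leq C$ cannot be dominated by $M^{-1/2}\bar b\,\lr{\xi}^{1/2}$ (which is merely bounded below). What is actually needed, and what the paper invokes, is $\dif_{x_1}\dif_t\bar q\in S(M^{-1},G)$ in case $(a)$ (respectively $\dif_{\xi_j}\dif_t\bar q\in S(M^{-1}\lr{\xi}^{-1},G)$ for $j=1,d$ in case $(b)$). This requires the extra structural input already exploited in Lemma~\ref{lem:kihon:1}: since $q\geq0$ and $\dif_{x_1}^2 q(\rho)=0$ (case $(a)$), positive semidefiniteness of the Hessian forces the entire $x_1$-row to vanish, in particular $\dif_{x_1}\dif_t q(\rho)=0$; then $\dif_{y_1}\tilde q$ vanishes to second order at $(0,0,0)$ and $\dif_{x_1}\dif_t\bar q\in S(M^{-1},G)$ follows. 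With this in hand your Leibniz computation is correct.
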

 Define $\phi$, the symbol of weight for energy estimates, by
\[
\phi=\sqrt{(t-\psi)^2+\lr{\xi}^{-1}}+t-\psi=\omega+t-\psi
\]
and note that
\begin{equation}
\label{eq:phi:uesita}
M\lr{\xi}^{-1}/C\leq \lr{\xi}^{-1}/(2\omega)\leq \phi\leq CM^{-1}.
\end{equation}
Introduce two metrics $g_{\ep}$, $\ep=a, b$ associated to the case $(a)$ and $(b)$;
\begin{equation}
\label{eq:metric:ab}
g_{\ep}=M^{-2\delta_{\ep a}}\lr{\xi}|dx|^2+M^{-2\delta_{\ep b}}\lr{\xi}^{-1}|d\xi|^2
\end{equation}
where $\delta_{\ep \ep'}=1$ if $\ep=\ep'$ and $\delta_{\ep \ep'}=0$ otherwise. 
The metric $g_{\ep}$ is $\sigma$ temperate  {\it uniformly} in $\gamma\geq M^2\geq 1$ which is checked later. Note that     
\[
M^{|\al+\be|}\lr{\xi}^{-|\be|}\leq (M^{4}\lr{\xi}^{-1})^{|\al+\be|/2}M^{-\ep(\al,\be)}\lr{\xi}^{(|\al|-|\be|)/2}.
\]
so that $S(m, G)\subset S(m, g_{\ep})$ where 
\[
\ep(\al, \be)=|\al|\delta_{\ep a}+|\be|\delta_{\ep b}.
\]
\begin{prop}
\label{pro:dif:omephi:matome}We  have $\omega^s\in S(\omega^s, g_{\ep})$ and $\phi^s\in S(\phi^s, g_{\ep})$. Moreover
\begin{gather*}
\dif_x^{\al}\dif_{\xi}^{\be}\omega^s\in S(M^{-\ep(\al,\be)}\omega^{-1}\lr{\xi}^{-1/2}\lr{\xi}^{(|\al|-|\be|)/2}\omega^s, g_{\ep}),\\
\dif_x^{\al}\dif_{\xi}^{\be}\phi^s\in S(M^{-\ep(\al,\be)}\omega^{-1}\lr{\xi}^{-1/2}\lr{\xi}^{(|\al|-|\be|)/2}\phi^s, g_{\ep})
\end{gather*}
for $|\al+\be|\geq 1$.
\end{prop}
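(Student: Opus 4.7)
The plan is to establish the estimates for $\omega^s$ first and then to deduce those for $\phi^s$ along the same lines. Setting $v = t - \psi$, we have the algebraic identity $\omega^2 = v^2 + \lr{\xi}^{-1}$. I would first prove the assertion for $\omega^2$: for $|\alpha+\beta| \geq 1$,
\[
\partial_x^\alpha \partial_\xi^\beta \omega^2 \in S\big(M^{-\epsilon(\alpha,\beta)}\omega\,\lr{\xi}^{-1/2}\lr{\xi}^{(|\alpha|-|\beta|)/2}, g_\epsilon\big).
\]
This follows from Leibniz applied to $v^2 + \lr{\xi}^{-1}$, using $|v| \leq \omega$ together with the sharp derivative bounds on $\psi$ extracted from Lemma \ref{lem:kihon:2}: the explicit leading parts ($M^{-1}\chi(M\xi_1\lr{\xi}^{-1})$ in case $(a)$, $\varep M^{-1}\chi(Mx_1) + cM^{-1}\chi(Mx_d)$ in case $(b)$) place the first-order derivatives of $\psi$ in sharper symbol classes than the crude inclusion $\psi \in S(M^{-1}, g_\epsilon)$ would give; terms arising from differentiating $\lr{\xi}^{-1}$ are absorbed via $\lr{\xi}^{-1} \leq \omega\,\lr{\xi}^{-1/2}$, which holds because $\omega \geq \lr{\xi}^{-1/2}$.

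For general $s$, apply Faà di Bruno with $F(u) = u^{s/2}$, which is smooth on the range $[\lr{\xi}^{-1}, C^2M^{-2}]$ of $\omega^2$ and satisfies $|F^{(k)}(u)| \leq C_k u^{s/2-k}$. A typical summand has the form $F^{(k)}(\omega^2)\prod_{i=1}^k \partial^{(\al_i,\be_i)}\omega^2$ with $\sum_i (\al_i,\be_i) = (\al,\be)$ and each $|\al_i+\be_i|\geq 1$, and is bounded by
\[
\omega^{s-2k}\cdot(\omega\lr{\xi}^{-1/2})^k \cdot M^{-\epsilon(\al,\be)}\lr{\xi}^{(|\al|-|\be|)/2} = M^{-\epsilon(\al,\be)}\,\omega^{s-k}\lr{\xi}^{-k/2}\lr{\xi}^{(|\al|-|\be|)/2}.
\]
Since $\omega^{-1}\lr{\xi}^{-1/2} \leq 1$, the residual factor $(\omega^{-1}\lr{\xi}^{-1/2})^{k-1}$ is $\leq 1$ for every $k \geq 1$, so each term is majorized by $M^{-\epsilon(\al,\be)}\omega^{s-1}\lr{\xi}^{-1/2}\lr{\xi}^{(|\al|-|\be|)/2}$, as required.

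For $\phi^s$, I would first derive the first-order bound on $\phi$ itself from the exact identity $\dif_{x_j}\phi = -(\phi/\omega)\dif_{x_j}\psi$ (a consequence of $\dif_{x_j}\omega = -(v/\omega)\dif_{x_j}\psi$ together with $1 + v/\omega = \phi/\omega$), and the analogous identity $\dif_{\xi_j}\phi = -(\phi/\omega)\dif_{\xi_j}\psi + (2\omega)^{-1}\dif_{\xi_j}\lr{\xi}^{-1}$. The lower bound $\phi \geq \lr{\xi}^{-1}/(2\omega) \geq M\lr{\xi}^{-1}/(2C_0)$ (using $\omega \leq C_0 M^{-1}$) absorbs the $\dif_{\xi_j}\lr{\xi}^{-1}$ remainder. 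Iterating these identities produces the higher-derivative estimates on $\phi$, and then Faà di Bruno with $F(u) = u^s$ extends the bounds to $\phi^s$: a typical term is $\phi^{s-k}(\phi\omega^{-1}\lr{\xi}^{-1/2})^k = \phi^s\omega^{-k}\lr{\xi}^{-k/2}$ times the $g_\epsilon$-factor, which again collapses to $\phi^s\omega^{-1}\lr{\xi}^{-1/2}$ via $\omega^{-1}\lr{\xi}^{-1/2}\leq 1$.

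The main obstacle I expect is the combinatorial bookkeeping in Faà di Bruno and the careful verification that the inequalities $\omega \geq \lr{\xi}^{-1/2}$ and $\phi \geq M\lr{\xi}^{-1}/(2C_0)$ suffice to collapse all $k \geq 2$ contributions into the single saving $\omega^{-1}\lr{\xi}^{-1/2}$ claimed in the proposition; here the crucial point is that the saving is measured by $\omega^{-1}$, not by $\phi^{-1}$, even in the $\phi^s$-estimate.
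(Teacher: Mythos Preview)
Your proposal is correct and follows essentially the same route as the paper. The paper also begins with $\omega^2=(t-\psi)^2+\lr{\xi}^{-1}$, first isolates sharp derivative bounds on $\psi$ (its Lemma \ref{lem:hyoka:psi:a}, which is exactly what you extract from Lemma \ref{lem:kihon:2}), proves the case $s=2$ for $\omega$ directly, and then passes to general $s$; for $\phi$ it uses the very identity you write, recorded there as the decomposition $\dif_x^{\al}\dif_{\xi}^{\be}\phi=(-\dif_x^{\al}\dif_{\xi}^{\be}\psi/\omega)\phi+\dif_x^{\al}\dif_{\xi}^{\be}\lr{\xi}^{-1}/(2\omega)$ for $|\al+\be|=1$, and proceeds by induction on $|\al+\be|$. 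Your explicit invocation of Fa\`a di Bruno and the collapse $(\omega^{-1}\lr{\xi}^{-1/2})^{k-1}\leq 1$ simply makes precise the step the paper leaves as ``it is easy to see $\omega^s\in S(\omega^s,g_{\ep})$''; the key inputs $\omega\geq\lr{\xi}^{-1/2}$ and $\phi\geq \lr{\xi}^{-1}/(2\omega)$ are used identically in both arguments.
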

\begin{lem}
\label{lem:phi:cho:seimitu}We have $
\dif_{\xi_j}\phi\in S(M^{-1}\omega^{-1}\lr{\xi}^{-1}\phi, g_a)$, $\dif_{\xi_j}\omega^s\in S(M^{-1}\omega^{s-1}\lr{\xi}^{-1}, g_a)$  for $j\neq 1$, $(a)$ and $\dif_{x_j}\phi\in S(M^{-1}\omega^{-1}\lr{\xi}^{-1}\phi, g_b)$, $\dif_{x_j}\omega^s\in S(M^{-1}\omega^{s-1}\lr{\xi}^{-1}, g_b)$ for $j\neq 1, d$, $(b)$.
\end{lem}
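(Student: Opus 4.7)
The plan is to differentiate the identity $\omega^2=(t-\psi)^2+\lr{\xi}^{-1}$ directly and exploit Lemma \ref{lem:kihon:2}, which supplies an extra factor of $M^{-1}$ in precisely those components of $d\psi$ that are ``tangential'' to the singular manifold in each case: $\dif_{\xi_j}\psi$ for $j\neq 1$ in case $(a)$, and $\dif_{x_j}\psi$ for $j\neq 1,d$ in case $(b)$. This improves the estimate of Proposition \ref{pro:dif:omephi:matome} by exactly one factor of $M^{-1}$ in these directions.

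In case $(a)$ one computes explicitly
\[
\dif_{\xi_j}\omega=-\frac{t-\psi}{\omega}\dif_{\xi_j}\psi-\frac{\xi_j\lr{\xi}^{-3}}{2\omega},\qquad \dif_{\xi_j}\phi=-\frac{\phi}{\omega}\dif_{\xi_j}\psi-\frac{\xi_j\lr{\xi}^{-3}}{2\omega},
\]
and $\dif_{\xi_j}\omega^s=s\omega^{s-1}\dif_{\xi_j}\omega$. For $j\neq 1$ Lemma \ref{lem:kihon:2} gives $\dif_{\xi_j}\psi\in S(M^{-1}\lr{\xi}^{-1},G)\subset S(M^{-1}\lr{\xi}^{-1},g_a)$, while Proposition \ref{pro:dif:omephi:matome} yields $\phi\in S(\phi,g_a)$ and $\omega^s\in S(\omega^s,g_a)$. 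Using $|t-\psi|\leq\omega$ and $\phi\leq 2\omega$ to keep the quotients $(t-\psi)/\omega$ and $\phi/\omega$ under control, the leading $\dif_{\xi_j}\psi$-terms fall into $S(M^{-1}\omega^{-1}\lr{\xi}^{-1}\phi,g_a)$ and $S(M^{-1}\omega^{s-1}\lr{\xi}^{-1},g_a)$ respectively.

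The remaining $\xi_j\lr{\xi}^{-3}/\omega$ residuals lie a priori in $S(\omega^{-1}\lr{\xi}^{-2},g_a)$ and $S(\omega^{s-2}\lr{\xi}^{-2},g_a)$ and need to be absorbed into the target classes. This uses \eqref{eq:phi:uesita}, which gives $\lr{\xi}^{-1}\leq CM^{-1}\phi$, together with $\omega\geq\lr{\xi}^{-1/2}$ and $\gamma\geq M^{4}$, so that $\omega^{-1}\lr{\xi}^{-1}\leq\lr{\xi}^{-1/2}\leq M^{-2}$. Since these pointwise dominations hold between $g_a$-admissible weights, they lift to the corresponding $S$-class inclusions.

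Case $(b)$ is treated by the same scheme: differentiate in $x_j$ for $j\neq 1,d$, invoke Lemma \ref{lem:kihon:2} for $\dif_{x_j}\psi\in S(M^{-1},G)\subset S(M^{-1},g_b)$, and note that $\dif_{x_j}\lr{\xi}^{-1}=0$ eliminates the residual, leaving only the $\psi$-pieces $-(\phi/\omega)\dif_{x_j}\psi$ and $-s\omega^{s-2}(t-\psi)\dif_{x_j}\psi$. The main bookkeeping task, and the least routine step, is to verify that the higher-order $g_\ep$-derivative bounds on these products match the targets; this is a product-rule calculation using Proposition \ref{pro:dif:omephi:matome} for $\phi,\omega$, Lemma \ref{lem:kihon:2} for $\psi$, and the constraint $\gamma\geq M^4$ to absorb any extraneous powers of $M$ by compensating powers of $\lr{\xi}^{1/2}$.
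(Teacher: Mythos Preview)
Your proof is correct and follows essentially the same route as the paper's: the key decomposition $\dif_{\xi_j}\phi=-(\dif_{\xi_j}\psi/\omega)\phi+\dif_{\xi_j}\lr{\xi}^{-1}/(2\omega)$ is precisely equation \eqref{eq:Phi:bunkai}, and the paper's proof likewise feeds the improved estimate $\dif_{\xi_j}\psi\in S(M^{-1}\lr{\xi}^{-1},G)$ for $j\neq 1$ from Lemma \ref{lem:kihon:2} into this identity. Your treatment is more explicit about absorbing the $\lr{\xi}^{-1}$ residual via \eqref{eq:phi:uesita} and about the $\omega^s$ half of the statement, but the underlying argument is identical.
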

\begin{prop}
\label{pro:omephi:to:g}$\omega$, $\phi$ are admissible weights for both $g_{\ep}$ and $\bg$.
\end{prop}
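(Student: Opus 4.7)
The plan is to verify, for each of the weights $\omega$ and $\phi$ and each of the three metrics $g_a$, $g_b$, $\bg$, the two defining properties of an admissible weight: slow variation and $\sigma$-temperateness. My first step is a reduction. Since $M\geq 1$ one has $g_\ep\leq\bg$ coefficient-wise, so every $\bg$-ball is contained in the corresponding $g_\ep$-ball and dually $g_\ep^\sigma\geq\bg^\sigma$. Consequently slow variation with respect to $g_\ep$ implies slow variation with respect to $\bg$, while $\sigma$-temperateness with respect to $\bg$ implies $\sigma$-temperateness with respect to $g_\ep$. It thus suffices to prove slow variation for $g_\ep$ ($\ep=a,b$) and $\sigma$-temperateness for the self-dual metric $\bg$.

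For slow variation I would feed the $|\al+\be|=1$ case of Proposition \ref{pro:dif:omephi:matome} into the standard estimate. A point $Y=(y,\eta)$ with $g_{\ep,X}(Y-X)\leq c^2$ satisfies $|y-x|\leq cM^{\delta_{\ep a}}\lr{\xi}^{-1/2}$ and $|\eta-\xi|\leq cM^{\delta_{\ep b}}\lr{\xi}^{1/2}$, and the constraint $\lr{\xi}\geq\gamma\geq M^4$ forces $\lr{\eta}\sim\lr{\xi}$ uniformly on such a ball once $c$ is small. The $s=1$ estimates read $|\dif_{x_j}\omega|\leq CM^{-\delta_{\ep a}}$ and $|\dif_{\xi_j}\omega|\leq CM^{-\delta_{\ep b}}\lr{\xi}^{-1}$, so integrating along the straight segment from $X$ to $Y$ yields $|\omega(Y)-\omega(X)|\leq C'c\,\lr{\xi}^{-1/2}\leq C'c\,\omega(X)$ thanks to $\omega(X)\geq\lr{\xi}^{-1/2}$. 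The analogous bounds on $\dif\phi$ carry an extra factor $\omega^{-1}\phi$, which the same integration converts into $|\phi(Y)-\phi(X)|\leq C'c\,\phi(X)$.

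For $\sigma$-temperateness of $\omega$ with respect to $\bg$ I would exploit the identity $\omega^2=(t-\psi)^2+\lr{\xi}^{-1}$ to write
\[
\omega(Y)^2\leq 4\omega(X)^2+4|\psi(Y)-\psi(X)|^2+\lr{\eta}^{-1}.
\]
In the regime $|\eta-\xi|\leq\lr{\xi}/2$ one has $\lr{\eta}\sim\lr{\xi}$ and the bounds $|\dif_x\psi|\leq C$, $|\dif_\xi\psi|\leq C\lr{\xi}^{-1}$ extracted from Lemma \ref{lem:kihon:2} give $|\psi(Y)-\psi(X)|^2\leq C(|y-x|^2+\lr{\xi}^{-2}|\eta-\xi|^2)\leq C\bg^\sigma(Y-X)/\lr{\xi}\leq C\bg^\sigma(Y-X)\,\omega(X)^2$, while $\lr{\eta}^{-1}\leq C\omega(X)^2$. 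In the complementary regime $|\eta-\xi|>\lr{\xi}/2$, the dual distance already satisfies $\bg^\sigma(Y-X)\geq\lr{\xi}/4$, which dominates the crude bound $\omega(Y)^2/\omega(X)^2\leq C\lr{\xi}$. Combining the two cases gives $\omega(Y)/\omega(X)\leq C(1+\bg^\sigma(Y-X))^{1/2}$. For $\phi$ I would exploit the factorization $\phi\cdot(\omega-(t-\psi))=\lr{\xi}^{-1}$, writing $\phi=\lr{\xi}^{-1}/(\omega-(t-\psi))$; the factor $\omega-(t-\psi)$ is admissible by the same argument (it is the analogue of $\phi$ after the reflection $t\mapsto -t$, $\psi\mapsto -\psi$), and $\lr{\xi}$ is a standard admissible weight, so $\phi$ inherits admissibility.

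The main technical obstacle is the temperateness step for $\omega$: the weight oscillates by a factor of up to $\lr{\xi}^{1/2}$ between its floor $\lr{\xi}^{-1/2}$ and its ceiling of order $M^{-1}$, so one has to exhibit quantitative growth of $\bg^\sigma(Y-X)$ in the large-displacement regime $|\eta-\xi|\sim\lr{\xi}$. The identity $\omega^2=(t-\psi)^2+\lr{\xi}^{-1}$ paired with the Lipschitz scaling of $\psi$ from Lemma \ref{lem:kihon:2} is precisely what makes the quantitative bound close.
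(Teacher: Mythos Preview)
Your reduction and your treatment of $\omega$ are correct and close to the paper's argument. The paper packages things slightly differently: rather than separating slow variation and $\sigma$-temperateness, it proves the single growth estimate $m(z+w)\leq Cm(z)(1+g_{\ep,z}(w))$ for $m=\omega,\phi$ (Lemma~\ref{lem:g:conti}), which in view of $g_\ep\leq\bg=\bg^\sigma\leq g_\ep^\sigma$ simultaneously gives both properties for both metrics. For $\omega$ the paper's case split ($|\eta|\lessgtr c\lr\xi$) and difference estimate via $\omega^2=(t-\psi)^2+\lr\xi^{-1}$ are essentially what you wrote.

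Your argument for $\phi$, however, has a genuine circularity. You write $\phi=\lr\xi^{-1}/\tilde\phi$ with $\tilde\phi=\omega-(t-\psi)$ and assert that $\tilde\phi$ is admissible ``by the same argument''. But the only argument you have offered for $\phi$ is this very reduction to $\tilde\phi$, and the direct argument you gave for $\omega$ does not carry over: it rested on the algebraic identity $\omega^2=(t-\psi)^2+\lr\xi^{-1}$, and $\tilde\phi$ admits no comparable expression that lets you bound $\tilde\phi(Y)$ by $\tilde\phi(X)$ plus a remainder controlled by $\tilde\phi(X)$. The reflection $f\mapsto-f$ exchanges $\phi$ and $\tilde\phi$ exactly, so reducing one to the other yields nothing. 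The difficulty is real: $\phi$ can be as small as $M\lr\xi^{-1}$ while $\omega\geq\lr\xi^{-1/2}$, so the naive bound $|\phi(Y)-\phi(X)|\precsim\lr\xi^{-1/2}(1+\bg_X(Y-X))^{1/2}$ that suffices for $\omega$ is too weak for $\phi$.

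The paper closes this gap with a bootstrap identity. Writing $f=t-\psi$, $h=\lr\xi^{-1/2}$, one has
\[
\phi(z+w)-\phi(z)=\frac{(f(z+w)-f(z))(\phi(z+w)+\phi(z))+h^2(z+w)-h^2(z)}{\omega(z+w)+\omega(z)},
\]
which after inserting the Lipschitz bounds on $f$ and $h^2$ gives $|\phi(z+w)-\phi(z)|\leq C\mu\,(\phi(z+w)+2\phi(z))$ with $\mu=\lr\xi^{-1/2}(1+g_{\ep,z}(w))^{1/2}/(\omega(z+w)+\omega(z))$. If $\mu<1/3$ one solves for $\phi(z+w)/\phi(z)$ directly; if $\mu\geq1/3$ one reads off $1+g_{\ep,z}(w)\gtrsim\lr\xi\,\omega(z+w)\omega(z)$ and combines this with $\phi(z+w)\leq2\omega(z+w)$ and $\phi(z)\geq\lr\xi^{-1}/(2\omega(z))$. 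This dichotomy is what your factorization trick was meant to replace, and it cannot be bypassed.
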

%

%%%%%%%%%%%%%%%%%%%%%%%%%%
\subsection{Some bounds of pseudodifferential operators}
\label{sec:pdo:bound}

We start with
\begin{lem}
\label{lem:gyaku}
Let $m$ be admissible for $g_{\ep}$ and $p\in S(m, g_{\ep})$ satisfy
$p\geq c\,m$ with some constant $c>0$. Then $p^{-1}\in S(m^{-1},g_{\ep})$ and there exist $k, {\tilde k}\in S(M^{-1},g_{\ep})$ such that
\begin{align*}
p\#p^{-1}\#(1+k)=1,\;\;(1+k)\#p\#p^{-1}=1,\;\;p^{-1}\#(1+k)\#p=1,\\
p^{-1}\#p\#(1+{\tilde k})=1,\;\;(1+{\tilde k})\#p^{-1}\#p=1,\;\;p\#(1+{\tilde k})\#p^{-1}=1.
\end{align*}
\end{lem}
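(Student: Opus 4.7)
The plan is to proceed in three stages: first establish $p^{-1}\in S(m^{-1},g_\ep)$ as a pointwise symbol bound; then expand $p\#p^{-1}$ and $p^{-1}\#p$ via the Moyal asymptotic series, identifying the remainders as elements of $S(M^{-1},g_\ep)$; and finally invert those remainders in the Weyl calculus by a Neumann-series argument to produce $k$ and $\tilde k$. The six displayed identities then split into four that are produced directly and two that follow from associativity of $\#$ together with uniqueness of two-sided inverses.

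For Stage 1, I would differentiate $p^{-1}$ by the Fa\`a di Bruno formula: $\dif_x^\al\dif_\xi^\be(p^{-1})$ is a finite sum of terms of the form $p^{-j-1}\prod_i\dif_x^{\al_i}\dif_\xi^{\be_i}p$ with $\sum_i\al_i=\al$, $\sum_i\be_i=\be$ and $j\le|\al+\be|$. Since $p\in S(m,g_\ep)$ with $m$ admissible, each derivative factor contributes $m$ times the appropriate $g_\ep$-weight; combined with the hypothesis $p\ge c\,m$, the $m$-factors collapse to $m^{-1}$, which is exactly the bound defining $S(m^{-1},g_\ep)$. For Stage 2, a short symplectic-dual computation identifies the Planck function of $g_\ep$ as $h=M^{-1}$; hence the standard Weyl expansion yields
\[
p\#p^{-1}=p\cdot p^{-1}+\tfrac{1}{2i}\{p,p^{-1}\}+\cdots =1+r,\quad r\in S(M^{-1},g_\ep),
\]
and symmetrically $p^{-1}\#p=1+\tilde r$ with $\tilde r\in S(M^{-1},g_\ep)$, the $M^{-1}$ gain coming from the fact that the Poisson bracket pairs one $x$-derivative with one $\xi$-derivative, contributing $M^{-\delta_{\ep a}}\cdot M^{-\delta_{\ep b}}=M^{-1}$.

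For Stage 3, once $M$ is large enough that $\|\op{r}\|_{\mathcal{L}(L^2,L^2)}<1$, the Neumann construction in the Weyl calculus (exactly as applied to $b\#b^{-1}$ in the paragraph preceding Proposition~\ref{pro:b:lam}, cf.~\cite{Be,Ku}) gives a two-sided $\#$-inverse $1+k$ of $1+r$ with $k\in S(M^{-1},g_\ep)$, yielding $p\#p^{-1}\#(1+k)=1$ and $(1+k)\#p\#p^{-1}=1$; the analogous argument applied to $1+\tilde r$ produces $\tilde k$ and the identities $p^{-1}\#p\#(1+\tilde k)=1$ and $(1+\tilde k)\#p^{-1}\#p=1$. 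The remaining identity $p^{-1}\#(1+k)\#p=1$ follows by associativity: $p^{-1}\#(1+k)$ is a right $\#$-inverse of $p$ while $(1+\tilde k)\#p^{-1}$ is a left $\#$-inverse of $p$, so they must coincide, and composing this equality on the right with $p$ together with $(1+\tilde k)\#p^{-1}\#p=1$ gives the third identity; the symmetric argument yields the sixth. The only nontrivial point is Stage~3, where one must verify that the Neumann iteration closes within the symbol class $S(M^{-1},g_\ep)$ rather than merely at the level of $L^2$-operators; this is standard once $h=M^{-1}$ is identified, and is the only place in the proof where $M$ must be taken sufficiently large.
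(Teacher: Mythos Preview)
Your overall strategy matches the paper's: show $p^{-1}\in S(m^{-1},g_\ep)$, expand $p\#p^{-1}=1-r$ with $r\in S(M^{-1},g_\ep)$, invert $1-r$ by a Neumann/Beals argument, and derive the remaining identities by associativity. Your identification of the Planck function $h=M^{-1}$ and your argument for the six identities are correct.

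The one place where you are vaguer than the paper is precisely your Stage~3. Your reference to the paragraph preceding Proposition~\ref{pro:b:lam} is not quite on point: there the conclusion is only ${\tilde r}\in S(1,\bg)$, not ${\tilde r}-1\in S(\lam^{-1},\bg)$. To land in $S(M^{-1},g_\ep)$ with constants uniform in $M$ and $\gamma$ one needs more than the bare Neumann/Beals statement, and the paper supplies a concrete mechanism. First it invokes Beals for the metric $\bg$ (not $g_\ep$), because the structure constants of $\bg$ are manifestly independent of $M$ and $\gamma$, obtaining $k=\sum_{\ell\ge1}r^{\#\ell}\in S(1,\bg)$ with uniform seminorm bounds. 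Then it bootstraps to $k\in S(M^{-1},g_\ep)$ by induction on the level $\ep(\al,\be)$, using the fixed-point equation $k=r+r\#k$: assuming $\sup|\lr{\xi}^{(|\be|-|\al|)/2}\dif_x^\al\dif_\xi^\be k|\le C_{\al\be}M^{-1-l}$ whenever $\ep(\al,\be)\ge l$ for $l\le\nu$, one differentiates $k=r+r\#k$ and distributes the derivatives across $r\#k$; since $r\in S(M^{-1},g_\ep)$, each factor $\dif_x^{\al''}\dif_\xi^{\be''}r$ contributes an extra $M^{-\ep(\al'',\be'')}$ and the induction closes at level $\nu+1$. Your claim that ``this is standard once $h=M^{-1}$ is identified'' would be justified if one knew a priori that Beals applies to $g_\ep$ with constants uniform in $M,\gamma$; the paper sidesteps that question by passing through $\bg$ and bootstrapping, which is the substantive content of the proof.
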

\begin{lem}
\label{lem:Ni:hon:1} Let $q\in S(1, g_{\ep})$ satisfy $q\geq c$ with a constant $c$ independent of $M$. Then there is $C>0$ such that 
\[
\big(\op{q}u,u) \geq (c-CM^{-1/2})\|u\|^2.
\]
\end{lem}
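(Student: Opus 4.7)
My plan is to reduce the inequality to a sharp G\aa rding-type lower bound for the non-negative symbol $q - c$.

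First I would write $q = c + r$ with $r := q - c \geq 0$ and $r \in S(1, g_\ep)$; then $(\op{q}u, u) = c\|u\|^2 + (\op{r}u, u)$, so the lemma reduces to showing $(\op{r}u, u) \geq -CM^{-1/2}\|u\|^2$. Next I would recall that $g_\ep$ is $\sigma$-temperate uniformly in $\gamma \geq M^4$ (as stated just after the definition of $g_\ep$), with Planck parameter $h = M^{-1}$ coming from $g_\ep/g_\ep^\sigma = M^{-2}$. Applying the sharp G\aa rding inequality of the Weyl--H\"ormander calculus (see e.g.\ \cite{Be}, \cite{Ku}) to the non-negative symbol $r \in S(1, g_\ep)$ would then yield
\begin{equation*}
(\op{r}u, u) \geq -C h \|u\|^2 = -CM^{-1}\|u\|^2,
\end{equation*}
which is strictly stronger than the claimed $M^{-1/2}$ bound. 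Combining, $(\op{q}u, u) \geq (c - CM^{-1})\|u\|^2 \geq (c - CM^{-1/2})\|u\|^2$.

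The main obstacle will be verifying that sharp G\aa rding applies uniformly in the parameters $M$ and $\gamma$; this reduces to the $g_\ep$-admissibility constants (slow variation and $\sigma$-temperateness) being independent of these parameters, which is asserted in the paper. Since the stated conclusion is weaker than what sharp G\aa rding provides, as an alternative I would also consider a more elementary direct argument: set $f := \sqrt{r + M^{-1/2}}$, verify via a Fa\`a di Bruno computation that $f$ lies in $S(1, g^*_\ep)$ for the rescaled metric $g^*_\ep := M g_\ep$, and expand $f \# f = (r + M^{-1/2}) + s$ using $\{f,f\}=0$; tracking the Planck parameter of the rescaled calculus then recovers the $M^{-1/2}$ loss exactly.
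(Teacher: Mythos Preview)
Your alternative route---the square-root construction $f=\sqrt{(q-c)+M^{-1/2}}$ followed by $f\#f=(q-c)+M^{-1/2}+s$---is exactly the paper's proof: after reducing to $c=0$ the paper observes that $(q+M^{-1/2})^{1/2}\in S((q+M^{-1/2})^{1/2},\bar g)$ with first derivatives gaining a factor $M^{-1/2}$, and since $\{f,f\}=0$ concludes that $f\#f-(q+M^{-1/2})\in S(M^{-1},\bar g)$. The only cosmetic difference is that the paper carries the calculus in $\bar g$ rather than your rescaled $Mg_\ep$; both have Planck parameter~$1$ and the argument is the same. Your primary route via sharp G\aa rding for $g_\ep$ is a legitimate shortcut yielding the stronger $M^{-1}$ loss directly, but it trades the paper's elementary $L^2$-boundedness input for a heavier black box whose constants must be verified uniform in $M,\gamma$; the explicit square-root construction makes that uniformity manifest.
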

\begin{lem}
\label{lem:Ni:hon:2} Let $q\in S(1, g_{\ep})$ then there is $C>0$ such that 
\[
\|\op{q}u\| \leq \big(\sup{|q|}+CM^{-1/2}\big)\|u\|.
\]
\end{lem}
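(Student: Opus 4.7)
The plan is to reduce the upper bound to the lower bound of Lemma~\ref{lem:Ni:hon:1} by working with $\op{q}^{*}\op{q}$ rather than $\op{q}$ itself. Since the Weyl quantization of the complex conjugate is the adjoint, $\op{q}^{*}=\op{\overline{q}}$, one has $\|\op{q}u\|^{2}=(\op{\overline{q}\#q}u,u)$, and it suffices to bound the right-hand side from above.

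Set $C_{0}=\sup|q|$. Granted that $g_{\ep}$ is slowly varying and $\sigma$-temperate uniformly in $M,\gamma$ with Planck function of order $M^{-1}$ (a claim the paper defers for later verification), the Weyl composition formula gives $\overline{q}\#q=|q|^{2}+r$ with $r\in S(M^{-1},g_{\ep})$; in particular $|r|\leq C_{1}M^{-1}$ pointwise, with $C_{1}$ depending only on the $S(1,g_{\ep})$-seminorms of $q$. Consider the symbol
\[
p:=C_{0}^{2}+C_{1}M^{-1}-\overline{q}\#q=(C_{0}^{2}-|q|^{2})+(C_{1}M^{-1}-r).
\]
Both summands are nonnegative, hence $p\geq 0$, and $p\in S(1,g_{\ep})$. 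Applying Lemma~\ref{lem:Ni:hon:1} with $c=0$ yields $(\op{p}u,u)\geq -C_{2}M^{-1/2}\|u\|^{2}$, which rearranges to
\[
\|\op{q}u\|^{2}\leq \bigl(C_{0}^{2}+C_{3}M^{-1/2}\bigr)\|u\|^{2}.
\]
Taking square roots via $\sqrt{C_{0}^{2}+s}\leq C_{0}+s/(2C_{0})$ for $C_{0}>0$ then gives $\|\op{q}u\|\leq (C_{0}+CM^{-1/2})\|u\|$; the case $C_{0}=0$ is trivial since then $q\equiv 0$.

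The principal obstacle is the Weyl-composition step producing a remainder in $S(M^{-1},g_{\ep})$, which rests on the still-unverified uniform slow variation and $\sigma$-temperateness of $g_{\ep}$ under the constraint $\gamma\geq M^{4}\geq 1$; modulo this, everything else is algebra. A minor secondary point is that the square-root step lets $C$ depend on $1/C_{0}=1/\sup|q|$, which is compatible with the lemma's convention that $C$ may depend on $q$ through its seminorms (including its sup) but must be independent of $M$ and $\gamma$.
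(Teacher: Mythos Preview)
The paper does not actually supply a proof of this lemma: Section~\ref{sec:proof:lemma} contains proofs of Lemmas~\ref{lem:gyaku}, \ref{lem:Ni:hon:1}, \ref{lem:kihon:fu} and \ref{lem:m:1:2}, but Lemma~\ref{lem:Ni:hon:2} is passed over, presumably as a routine consequence of Lemma~\ref{lem:Ni:hon:1}. Your $T^{*}T$ argument is exactly the expected derivation and is correct. Two small remarks.

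First, you use $|r|\leq C_{1}M^{-1}$ to conclude $C_{1}M^{-1}-r\geq 0$, which tacitly assumes $r$ is real. This is fine: ${\overline q}\#q$ is the Weyl symbol of the self-adjoint operator $\op{q}^{*}\op{q}$ and is therefore real, and $|q|^{2}$ is real, hence so is $r$. It is worth saying explicitly, since Lemma~\ref{lem:Ni:hon:1} is stated for real symbols.

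Second, on the square-root step: the inequality $\sqrt{C_{0}^{2}+s}\leq C_{0}+s/(2C_{0})$ holds for all $s\geq 0$, so the only issue is the dependence of $C$ on $1/\sup|q|$ that you flag. In every application the paper makes of this lemma the sup is either bounded below by a fixed positive constant (e.g.\ the use inside the proof of Lemma~\ref{lem:kihon:fu}) or is itself already of size $O(M^{-1/2})$, so no harm results. If one insists on a constant independent of $\sup|q|$, the cruder bound $\sqrt{C_{0}^{2}+s}\leq C_{0}+\sqrt{s}$ yields $\|\op{q}u\|\leq(\sup|q|+CM^{-1/4})\|u\|$, which would also suffice for all the paper's purposes. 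Your hedging on the metric is appropriate: the uniform $\sigma$-temperateness of $g_{\ep}$ is verified at the start of Section~4.2, and since $g_{\ep}\leq {\bar g}={\bar g}^{\sigma}\leq g_{\ep}^{\sigma}$ with ratio $M^{-2}$ the Planck function is indeed $M^{-1}$, so the composition remainder lies in $S(M^{-1},g_{\ep})$ as you claim.
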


\begin{lem}
\label{lem:kihon:fu}Let $m>0$ be  admissible for $g_{\ep}$ and  $m\in S(m, g_{\ep})$.  Then
\[
(\op{m}u, u)\geq (1-CM^{-2})\|\op{\sqrt{m}}u\|^2.
\]
If $q\in S(m, g_{\ep})$ then there is $C>0$ such that
\[
\big|(\op{q}u,u)\big|\leq \big(\sup{\big(|q|/m\big)}+CM^{-1/2}\big)\|\op{\sqrt{m}\,}u\|^2.
\]
\end{lem}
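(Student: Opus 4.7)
My plan is to reduce both inequalities to the operator-norm bound of Lemma \ref{lem:Ni:hon:2}, via a conjugation that rewrites $(\op{q}u,u)$ as $(\op{s}\op{\sqrt{m}}u,\op{\sqrt{m}}u)$ up to small errors. Since $\sqrt{m}\in S(\sqrt{m},g_\ep)$ trivially satisfies $\sqrt{m}\geq\sqrt{m}$, I would apply Lemma \ref{lem:gyaku} to $\sqrt{m}$ to produce an approximate inverse $\widetilde{\sqrt{m}}\in S(m^{-1/2},g_\ep)$ together with $k,\tilde k\in S(M^{-1},g_\ep)$ such that $\sqrt{m}\#\widetilde{\sqrt{m}}\#(1+k)=1$ and $(1+\tilde k)\#\widetilde{\sqrt{m}}\#\sqrt{m}=1$.

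For the second inequality, I would set $s=\widetilde{\sqrt{m}}\#q\#\widetilde{\sqrt{m}}\in S(1,g_\ep)$. Its leading pointwise value is $q/m$, and the Weyl remainders together with the contributions of $k,\tilde k$ are $O(M^{-1})$ relative to $\sup(|q|/m)$, so Lemma \ref{lem:Ni:hon:2} gives $\|\op{s}\|\leq \sup(|q|/m)+CM^{-1/2}$. A direct expansion then shows $q-\sqrt{m}\#s\#\sqrt{m}\in S(M^{-1}m,g_\ep)$, a residual a factor $M^{-1}$ smaller. Iterating the construction on this residual yields an asymptotic Neumann-type series whose operator norms sum geometrically and are absorbed into the $CM^{-1/2}$ correction, producing $|(\op{q}u,u)|\leq (\sup(|q|/m)+CM^{-1/2})\|\op{\sqrt{m}}u\|^2$.

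For the first inequality, the key observation is that $\sqrt{m}\#\sqrt{m}=m+r$ with $r\in S(M^{-2}m,g_\ep)$: the first-order Poisson term $\{\sqrt{m},\sqrt{m}\}/(2i)$ vanishes identically and, by antisymmetry of the symplectic form, every odd-order term in the Weyl expansion of $a\#a$ vanishes on the diagonal, leaving only even-order corrections of size $h^2=M^{-2}$. Thus $\|\op{\sqrt{m}}u\|^2=(\op{m}u,u)+(\op{r}u,u)$. Applying the same conjugation to $r$, the symbol $\widetilde{\sqrt{m}}\#r\#\widetilde{\sqrt{m}}\in S(M^{-2},g_\ep)$ has operator norm $O(M^{-2})$ by Lemma \ref{lem:Ni:hon:2} applied after rescaling by $M^2$, and the same iteration then gives $|(\op{r}u,u)|\leq CM^{-2}\|\op{\sqrt{m}}u\|^2$, which implies $(\op{m}u,u)\geq(1-CM^{-2})\|\op{\sqrt{m}}u\|^2$.

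The main difficulty will be verifying the vanishing of the odd-order terms in $\sqrt{m}\#\sqrt{m}$ (this cancellation is what forces the improvement from $M^{-1/2}$ to $M^{-2}$ in the first inequality) and controlling the Neumann iteration so that the $\sup(|q|/m)$ constant in the second inequality is preserved without a multiplicative factor greater than one; both rely on carefully tracking the symbol classes $S(M^{-j}m,g_\ep)$ produced at each step.
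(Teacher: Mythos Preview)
Your approach is essentially the paper's: conjugate by $\sqrt{m}$, reduce to a symbol in $S(1,g_\ep)$, and invoke Lemma~\ref{lem:Ni:hon:2}. The observation that the odd-order terms of $\sqrt{m}\#\sqrt{m}$ vanish, giving $r\in S(M^{-2}m,g_\ep)$, is exactly what drives the $M^{-2}$ in the first inequality.

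The one place where you work harder than necessary is the Neumann iteration. Lemma~\ref{lem:gyaku} does not merely produce an approximate inverse: the identities $m^{1/2}\#(1+k)\#m^{-1/2}=1$ and $m^{-1/2}\#(1+\tilde k)\#m^{1/2}=1$ are \emph{exact}. So if you set
\[
\tilde q=(1+k)\#m^{-1/2}\#q\#m^{-1/2}\#(1+\tilde k)\in S(1,g_\ep),
\]
then $m^{1/2}\#\tilde q\#m^{1/2}=q$ holds on the nose, and $(\op{q}u,u)=(\op{\tilde q}\op{\sqrt m}u,\op{\sqrt m}u)$ with no remainder to iterate away. Since $k,\tilde k\in S(M^{-1},g_\ep)$ one has $\tilde q=qm^{-1}+r'$ with $r'\in S(M^{-1},g_\ep)$, and Lemma~\ref{lem:Ni:hon:2} applied to $qm^{-1}$ finishes the second inequality directly. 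The same device applied to $r\in S(M^{-2}m,g_\ep)$ gives $\tilde r\in S(M^{-2},g_\ep)$ with $m^{1/2}\#\tilde r\#m^{1/2}=r$, yielding the first inequality in one stroke. This eliminates the concern you flag at the end about preserving the $\sup(|q|/m)$ constant through an infinite series, and it avoids having to argue that $(\op{r_N}u,u)\to 0$ when $m$ may be unbounded.
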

\begin{lem}
\label{lem:m:1:2}Let $m_i>0$ be admissible for $g_{\ep}$ and assume that $m_i\in S(m_i, g_{\ep})$ and $m_2\leq C\, m_1$ with  $C>0$. Then there is $C'>0$ such that
\[
\big\|\op{m_2}u\big\|\leq C'\big\|\op{m_1}u\big\|.
\]
\end{lem}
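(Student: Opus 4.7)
The plan is to build a left parametrix for $\op{m_1}$ via Lemma \ref{lem:gyaku}, and then show that the composition of $\op{m_2}$ with this parametrix is $L^2$-bounded by identifying its Weyl symbol in $S(1, g_\ep)$ and invoking Lemma \ref{lem:Ni:hon:2}.

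Concretely, since $m_1 > 0$ is admissible for $g_\ep$ with $m_1 \in S(m_1, g_\ep)$ and $m_1 \geq 1\cdot m_1$, Lemma \ref{lem:gyaku} applied with $p = m_1$ (and weight $m = m_1$) produces $m_1^{-1} \in S(m_1^{-1}, g_\ep)$ together with some $k \in S(M^{-1}, g_\ep)$ satisfying $m_1^{-1} \# (1+k) \# m_1 = 1$. Setting $E = \op{m_1^{-1}} \op{1+k}$ thus gives $E \op{m_1} = I$, so that for every $u \in \mathcal{S}(\R^d)$ one can write
\[
\op{m_2} u = (\op{m_2} E) \op{m_1} u.
\]

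The next step is to analyse the Weyl symbol $a = m_2 \# m_1^{-1} \# (1+k)$ of $\op{m_2} E$. Since $g_\ep$ is $\sigma$-temperate uniformly in $\gamma \geq M^2 \geq 1$ and $m_1, m_2$ are admissible weights for $g_\ep$, the Weyl composition lands in $S(m_2 m_1^{-1} \cdot 1, g_\ep) = S(m_2/m_1, g_\ep)$. The hypothesis $m_2 \leq C m_1$ then embeds this into $S(1, g_\ep)$ with seminorms uniform in $M, \gamma$ under the constraints \eqref{eq:seigen} and \eqref{eq:para:seigen}. Applying Lemma \ref{lem:Ni:hon:2} to $\op{a}$ supplies a constant $C' > 0$, independent of $M$ and $\gamma$, with $\|\op{a}\|_{\mathcal{L}(L^2, L^2)} \leq C'$, which yields
\[
\|\op{m_2} u\| = \|\op{a} \op{m_1} u\| \leq C'\|\op{m_1} u\|.
\]

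The only mildly subtle point is uniformity of all constants in $M$ and $\gamma$, which reduces to the (already announced) uniform $\sigma$-temperateness of $g_\ep$ together with the uniform nature of the symbolic estimates underlying Lemmas \ref{lem:gyaku} and \ref{lem:Ni:hon:2}. Granted that, the argument is a single application of a parametrix followed by a Calderón–Vaillancourt-type $L^2$ bound, with no further obstacle expected.
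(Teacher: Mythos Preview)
Your proof is correct and follows essentially the same route as the paper: both apply Lemma~\ref{lem:gyaku} to $m_1$ to obtain $m_1^{-1}\#(1+k)\#m_1=1$, set ${\tilde m}_2=m_2\#m_1^{-1}\#(1+k)\in S(m_2/m_1,g_\ep)\subset S(1,g_\ep)$, and conclude by $L^2$-boundedness of $\op{{\tilde m}_2}$. The only cosmetic difference is that you invoke Lemma~\ref{lem:Ni:hon:2} explicitly for the last step, whereas the paper simply cites $L^2$-boundedness.
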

%

%%%%%%%%%%%%%%%%%%%%%%%%%%%%%%%%%%
%%%%%%%%%%%%%%%%%%%%%
\section{Direct energy estimates}
\label{sec:direct:ene}

\subsection{Direct energy estimate for localized operators}

Let 
 \[
 L=\op{\ell},\quad B=\op{b}. 
 \]
 Since $\ell\in S(M^{-1}\lr{\xi}, G)$ then $\dif_x^{\al}\dif_{\xi}^{\be}\ell\in S(M^{-1+|\al+\be|}\lr{\xi}^{1-|\be|}, g_{\ep})$ for $|\al+\be|=2$ hence 
$\ell\#\ell-\ell^2\in S(M^{2}, g_{\ep})\subset S(M^{-2}\lr{\xi}, g_{\ep})$ 
because of \eqref{eq:seigen}, that is
\begin{equation}
\label{eq:l:to:L}
\op{\ell^2}=L^2+\op{r},\quad r\in S(M^{-2}\lr{\xi}, g_{\ep}).
\end{equation}
On the other hand we have $
b\#b=b^2+{\tilde r}=q+{\bar\lam} \lr{\xi}+{\tilde r}$ 
with ${\tilde r}\in S(\lr{\xi}, \bg)$ thanks to Lemma \ref{lem:b:2}. Thus
\begin{equation}
\label{eq:q:to:B}
\op{q}=B^2-\op{r},\quad r={\bar\lam} \lr{\xi}+{\tilde r}\in S(\lr{\xi}, \bg).
\end{equation}
%
%%%%
Taking $(D_t-i\theta)e^{-\theta t}=e^{-\theta t}D_t$ where $\theta>0$ into account consider
\[
{\hat P_{\theta}}=-A^2+L^2(t, x, D)+B^2(t, x, D)+B_0(t, x, D)D_t+B_1(t, x, D)
\]
with $A=D_t-i\theta$ where $B_i=\op{{\tilde b_i}}$ and  ${\tilde b_i}\in S(\lr{\xi}^i, \bg)$. Since
\begin{gather*}
{\hat P}(t, x, \tau, \xi)=-\tau^2+\ell^2(t, x,\xi)+q(t, x, \xi)+b_{1}(t, x, \xi)\lr{\xi}+b_{0}(t, x,\xi)\tau\\
=-\tau^2+\ell^2(t, x,\xi)+b^2(t, x, \xi)
+(b_{1}(t, x, \xi)-{\bar \lam})\lr{\xi}+b_{0}(t, x,\xi)\tau
\end{gather*}
where $b_{j}=b_j(t, y(x),\eta(\xi)+e_d)$ hence  ${\tilde b_1}$ contains ${\bar \lam}\lr{\xi}$ {\it but ${\bar\lam}$ has been fixed}. Recall that ${\hat P}(t, x, \tau, \xi)$ coincides with the symbol of $T^{-1}PT$ in $W_{M,\gamma}$. 
\begin{definition} 
\label{dfn:norm}\rm 
We set
\[
\varPhi=\op{\phi^{-n}},\quad \varPhi^{\flat}=\op{\omega^{1/2}\phi^{-n}},\quad \varPhi^{\sharp}=\op{\omega^{-1/2}\phi^{-n}}
\]
here and in what follows to simplify notation the power $n$ is not indicated in $\varPhi$, $\varPhi^{\flat}$, $\varPhi^{\sharp}$ which depends on $n$ of course.
\end{definition}
In this section it is assumed that all constants $c$, ${\hat c}$, ${\bar c}$, $c_i$  are independent of $n$, $M$, $\gamma$ and $\theta$ and every constant $C$, may change from line to line, is independent of $M$, $\gamma$ and $\theta$ while may depend on $n$. 

Assume $K^*=K$ (actually we take $K=L$ or $K=B$) then it is easy to see 
\begin{equation}
\label{eq:ene:id:K}
\begin{split}
2{\mathsf{Im}}(\varPhi K^2u, \varPhi Au)=\dif_t\|\varPhi Ku\|^2+2\theta\|\varPhi Ku\|^2\\
+2{\mathsf{Im}}(\varPhi [A, K]u, \varPhi Ku)+2{\mathsf{Im}}([A, \varPhi]Ku, \varPhi Ku)\\
+2{\mathsf{Im}}([\varPhi, K]Au, \varPhi Ku)+2{\mathsf{Im}}(\varPhi Au, [K, \varPhi]Ku).
\end{split}
\end{equation}
Note that $[A, \varPhi]=in\,\op{\omega^{-1}\phi^{-n}}$ and hence
\[
2{\mathsf{Im}}([A, \varPhi]Ku, \varPhi Ku)=2\,n{\mathsf{Re}}(\op{\omega^{-1}\phi^{-n}}Ku, \op{\phi^{-n}} Ku).
\]
Consider $\op{\phi^{-n}}\op{\omega^{-1} \phi^{-n}}=
\op{\phi^{-n}\#(\omega^{-1} \phi^{-n})}$. Since $\phi^{-n}\#(\omega^{-1} \phi^{-n})=\omega^{-1}\phi^{-2n}+r$ with $r\in S(M^{-1}\omega^{-1}\phi^{-2n}, g_{\ep})$ in view of Proposition \ref{pro:dif:omephi:matome} then thanks to Lemma \ref{lem:kihon:fu} one has $
|(\op{r}u, u)|\leq CM^{-1}\|\varPhi^{\sharp}u\|^2$. 
Thus Lemma \ref{lem:kihon:fu} again gives
\begin{equation}
\label{eq:A:to:K}
2{\mathsf{Im}}([A, \varPhi]Ku, \varPhi Ku)\geq 2n(1-CM^{-1})\|\varPhi^{\sharp}Ku\|^2.
\end{equation}

Note that $
\ell\#\phi^{-n}-\phi^{-n}\#\ell=-i\{\ell, \phi^{-n}\}+r$ with $r\in S(M\phi^{-n}, g_{\ep})$
since $\dif_x^{\al}\dif_{\xi}^{\be}\phi^{-n}\in S(M^{-\ep(\al,\be)}\omega^{-1}\lr{\xi}^{-1/2}\lr{\xi}^{(|\al|-|\be|)/2}\phi^{-n}, g_{\ep})$ for $|\al+\be|=2$ by Proposition \ref{pro:dif:omephi:matome} and 
$\dif_x^{\be}\dif_{\xi}^{\al}\ell\in S(M^{-1+|\be+\al|}\lr{\xi}^{1-|\al|}, g_{\ep})$ for $|\al+\be|=2$ and 
$\omega\geq \lr{\xi}^{-1/2}$. Note that
\[
\{\ell, \phi^{-n}\}=-in\,\omega^{-1}\{\ell, \psi\}\phi^{-n}+in\,\omega^{-1}\{\ell, \lr{\xi}^{-1}\}\phi^{-n-1}
\]
where $\omega^{-1}\{\ell, \lr{\xi}^{-1}\}\phi^{-n-1}\in S(\phi^{-n}, g_{\ep})$ in view of \eqref{eq:phi:uesita}. Therefore we have $
\ell\#\phi^{-n}-\phi^{-n}\#\ell=in\{\ell, \psi\}\omega^{-1}\phi^{-n}+r$ with $ r\in S(M\phi^{-n}, g_{\ep})$. 
Thanks to Proposition \ref{pro:dif:omephi:matome} one has $\phi^{-n}\#(\{\ell, \psi\}\omega^{-1}\phi^{-n})-\{\ell, \psi\}\omega^{-1}\phi^{-2n}\in S(M^{-1}\omega^{-1}\phi^{-2n}, g_{\ep})$ since  $\{\ell, \psi\}\in S(1, g_{\ep})$. Thus one can write
\[
\phi^{-n}\#(\ell\#\phi^{-n}-\phi^{-n}\#\ell)=in\{\ell, \psi\}\,\omega^{-1}\phi^{-2n}+r_1+r_2
\]
where $r_1\in S(M^{-1}\omega^{-1}\phi^{-2n}, g_{\ep})$ and $r_2\in S(M\phi^{-2n}, g_{\ep})$. Write 
\[
(1+k)\#(\omega^{1/2}\phi^n)\#(\{\ell, \psi\}\omega
^{-1}\phi^{-2n})\#(\omega^{1/2}\phi^n)\#(1+{\tilde k})=r
\]
 with $k, {\tilde k}\in S(M^{-1}, g_{\ep})$ 
such that $(\omega^{-1/2}\phi^{-n})\#r\#(\omega^{-1/2}\phi^{-n})=\{\ell, \psi\}\omega^{-1}\phi^{-2n}$  where $r-\{\ell, \psi\}\in S(M^{-1}, g_{\ep})$ is clear. Recalling Lemma \ref{lem:kokan} and applying Lemma \ref{lem:kihon:fu} we obtain
\begin{gather*}
|(\varPhi Au, [L, \varPhi]Lu)|\leq n(|\kappa|+CM^{-1})\|\varPhi^{\sharp}Au\|\|\varPhi^{\sharp}Lu\|
+CM\|\varPhi Au\|\|\varPhi Lu\|.
\end{gather*}
Since $|([\varPhi, L]Au, \varPhi Lu)|$ can be estimated in the same way we have
\begin{equation}
\label{eq:L:kokan:Phi}
\begin{split}
2|(\varPhi Au, [L, \varPhi]Lu)|+2|([\varPhi, L]Au, \varPhi Lu)|\\
\leq 2n(|\kappa|+CM^{-1})\big(\|\varPhi^{\sharp} Au\|^2
+\|\varPhi^{\sharp} Lu\|^2\big)\\
+CM\big(\|\varPhi Au\|^2+\|\varPhi Lu\|^2\big).
\end{split}
\end{equation}
Note that $[A, L]=-i\,\op{\dif_t\ell}$ and $\dif_t\ell\in S(\lr{\xi}, G)$. Write
\begin{equation}
\label{eq:dif:ltophi}
(1+k_1)\#(\omega^{1/2}\phi^n)\#\phi^{-n}\#\phi^{-n}\#(\dif_t\ell)\#\lr{\xi}^{-1}\#(\omega^{-1/2}\phi^n)
\#(1+k_2)=r
\end{equation}
such that $(\omega^{-1/2}\phi^{-n})\#r\#(\omega^{1/2}\phi^{-n})\#\lr{\xi}=\phi^{-n}\#\phi^{-n}\#(\dif_t\ell)$ where it is clear that $r-\dif_t\ell\lr{\xi}^{-1}\in 
 S(M^{-1}, g_{\ep})$. Noting \eqref{eq:ell:bar} and the constraint of the $t$ range \eqref{eq:t:seigen} we have 
\[
\big|\lr{\xi}^{-1}\dif_t\ell(t, x,\xi)\big|\leq c_0+CM^{-1},\quad c_0=|\dif_t\ell(0,e_d)|.
\]
Then it follows from Lemma \ref{lem:Ni:hon:2} that
\begin{equation}
\label{eq:A:kokan:L}
|(\varPhi[A, L]u, \varPhi Lu)|\leq (c_0+CM^{-1})\|\varPhi^{\flat}\lr{D} u\|\|\varPhi^{\sharp}Lu\|.
\end{equation}
From \eqref{eq:ene:id:K}, \eqref{eq:A:to:K}, \eqref{eq:L:kokan:Phi} and \eqref{eq:A:kokan:L} it follows that
\begin{lem}
\label{lem:ene:L}We have
\begin{gather*}
2{\mathsf{Im}}(\varPhi L^2u, \varPhi Au)\geq \dif_t\|\varPhi Lu\|^2+(2\theta-CM)\|\varPhi Lu\|^2\\
+2n(1-|\kappa|-c_0/2n-CM^{-1})\|\varPhi^{\sharp}Lu\|^2
-2n(|\kappa|+CM^{-1})\|\varPhi^{\sharp}Au\|^2\\-(c_0+CM^{-1})\| \varPhi^{\flat}\lr{D}u\|^2
-CM\|\varPhi Au\|^2.
\end{gather*}
\end{lem}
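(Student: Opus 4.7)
The plan is to read off the conclusion from the general energy identity \eqref{eq:ene:id:K} with $K=L$, inserting the three estimates \eqref{eq:A:to:K}, \eqref{eq:L:kokan:Phi}, \eqref{eq:A:kokan:L} already proved just above, plus a single AM--GM balancing. The hypothesis $K^{*}=K$ is legitimate since $\ell_{M}$ is real-valued and we are in the Weyl calculus, so $L=\op{\ell}$ is formally self-adjoint.

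Starting from \eqref{eq:ene:id:K} with $K=L$, I would move everything except $2{\rm Im}(\varPhi L^{2}u,\varPhi Au)$, $\dif_{t}\|\varPhi Lu\|^{2}$ and $2\theta\|\varPhi Lu\|^{2}$ to the right-hand side, so that what must be controlled from below is
\[
2{\rm Im}(\varPhi[A,L]u,\varPhi Lu)+2{\rm Im}([A,\varPhi]Lu,\varPhi Lu)+2{\rm Im}([\varPhi,L]Au,\varPhi Lu)+2{\rm Im}(\varPhi Au,[L,\varPhi]Lu).
\]
I would then substitute \eqref{eq:A:to:K} (lower bound, contributing $+2n(1-CM^{-1})\|\varPhi^{\sharp}Lu\|^{2}$), \eqref{eq:L:kokan:Phi} (used with reversed sign, contributing $-2n(|\kappa|+CM^{-1})\bigl(\|\varPhi^{\sharp}Au\|^{2}+\|\varPhi^{\sharp}Lu\|^{2}\bigr)-CM(\|\varPhi Au\|^{2}+\|\varPhi Lu\|^{2})$), and \eqref{eq:A:kokan:L} (also reversed, giving $-2(c_{0}+CM^{-1})\|\varPhi^{\flat}\lr{D}u\|\,\|\varPhi^{\sharp}Lu\|$).

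The only non-mechanical step is the AM--GM balancing of the last cross-term. Using $2\alpha ab\le (\alpha^{2}/c_{0})a^{2}+c_{0}\,b^{2}$ with $\alpha=c_{0}+CM^{-1}$, $a=\|\varPhi^{\flat}\lr{D}u\|$ and $b=\|\varPhi^{\sharp}Lu\|$, the expansion $(c_{0}+CM^{-1})^{2}/c_{0}=c_{0}+2CM^{-1}+O(M^{-2})$ is still of the form $c_{0}+CM^{-1}$ after renaming the constant, so one gets
\[
2(c_{0}+CM^{-1})\|\varPhi^{\flat}\lr{D}u\|\,\|\varPhi^{\sharp}Lu\|\le (c_{0}+CM^{-1})\|\varPhi^{\flat}\lr{D}u\|^{2}+c_{0}\|\varPhi^{\sharp}Lu\|^{2}.
\]
The point of this particular choice of parameter is that the leftover $c_{0}\|\varPhi^{\sharp}Lu\|^{2}$ can be absorbed into the large positive term from \eqref{eq:A:to:K}, exactly producing the $c_{0}/(2n)$ that appears in the statement.

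Finally I would collect coefficients: the $\|\varPhi^{\sharp}Lu\|^{2}$ pieces add to $2n(1-CM^{-1})-2n(|\kappa|+CM^{-1})-c_{0}=2n\bigl(1-|\kappa|-c_{0}/(2n)-CM^{-1}\bigr)$; the $\|\varPhi^{\sharp}Au\|^{2}$ coefficient $-2n(|\kappa|+CM^{-1})$ comes straight from \eqref{eq:L:kokan:Phi}; the $\|\varPhi^{\flat}\lr{D}u\|^{2}$ coefficient $-(c_{0}+CM^{-1})$ from the AM--GM step; the $-CM\|\varPhi Au\|^{2}$ from \eqref{eq:L:kokan:Phi}; and the $\theta$-term combines with the $-CM\|\varPhi Lu\|^{2}$ piece from \eqref{eq:L:kokan:Phi} to give $(2\theta-CM)\|\varPhi Lu\|^{2}$. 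There is no real obstacle here beyond the bookkeeping: the lemma is essentially an assembly of the three previous displays, and the only quantitative choice is the AM--GM weight, which is forced by the requirement that the residue on $\|\varPhi^{\sharp}Lu\|^{2}$ be exactly $c_{0}$.
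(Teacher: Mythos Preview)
Your proposal is correct and matches the paper's approach exactly: the paper simply states that the lemma follows from \eqref{eq:ene:id:K}, \eqref{eq:A:to:K}, \eqref{eq:L:kokan:Phi} and \eqref{eq:A:kokan:L}, and you have spelled out precisely that assembly, including the AM--GM balancing of the cross-term from \eqref{eq:A:kokan:L} that the paper leaves implicit. The choice of AM--GM weight producing the $c_{0}/(2n)$ residue on $\|\varPhi^{\sharp}Lu\|^{2}$ is indeed the forced one, and your bookkeeping of the coefficients is accurate.
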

Note that from Corollary \ref{cor:lam:2} and Lemma \ref{lem:b:2} we have 
\begin{equation}
\label{eq:jd:b:seimitu}
\begin{split}
\dif_x^{\al}\dif_{\xi}^{\be}b\in S(\lr{\xi}^{1-|\be|}, \bg),\quad |\al+\be|=1,\\
\dif_{x_1}b\in S(M^{-1/2}\lr{\xi}, \bg), (a),\quad \dif_{\xi_j}b\in S(M^{-1/2}, \bg), \;\; j=1, d, (b).
\end{split}
\end{equation}
From Proposition \ref{pro:dif:omephi:matome} and Lemma \ref{lem:phi:cho:seimitu} it follows that
\begin{equation}
\label{eq:jd:phi:seimitu}
\begin{split}
\dif_x^{\al}\dif_{\xi}^{\be}(\omega^{-1/2}\phi^{-n})\in S(\omega^{-3/2}\lr{\xi}^{-|\be|}\phi^{-n}, g_{\ep}),\;\;|\al+\be|=1,\\
\dif_{\xi_j}(\omega^{-1/2}\phi^{-n})\in S(M^{-1}\omega^{-3/2}\lr{\xi}^{-1}\phi^{-n}, g_a),\quad j\neq 1,\;\; (a)\\
\dif_{x_j}(\omega^{-1/2}\phi^{-n})\in S(M^{-1}\omega^{-3/2}\phi^{-n}, g_b),\quad j\neq 1, d,\;\; (b).
\end{split}
\end{equation}
Since $g_{\ep}\leq \bg$ and $\omega$ and $\phi$ are $\bg$ admissible weights thanks to Proposition \ref{pro:omephi:to:g} one concludes from \eqref{eq:jd:b:seimitu} and \eqref{eq:jd:phi:seimitu} that
\begin{equation}
\label{eq:b:kokan:phi}
(\omega^{-1/2}\phi^{-n})\#b-b\#(\omega^{-1/2}\phi^{-n})\in S(M^{-1/2}\lr{\xi}\omega^{1/2}\phi^{-n}, \bg)
\end{equation}
where we have used $\omega^2\geq \lr{\xi}^{-1}$. Thus an application of Lemma \ref{lem:kihon:fu} shows 
\begin{equation}
\label{eq:Phi:kokan:B}
\|\varPhi^{\sharp}Bu\|\geq \|B\varPhi^{\sharp}u\|-CM^{-1/2}\|\varPhi^{\flat}\lr{D} u\|.
\end{equation}
Let ${\tilde B}=\op{{\tilde b}}$ where ${\tilde b}$ is given in Proposition \ref{pro:b:lam} such that $B\cdot{\tilde B}=1$ and ${\tilde B}\cdot B=1$. 
In view of \eqref{eq:b:sita} one sees ${\tilde b}^{-1}\in S(\lr{\xi}^{-1}\omega^{-1}, \bg)$ hence $
(\lr{\xi}\omega)\#{\tilde b}\in S(1, \bg)$. 
Therefore writing $\lr{\xi}\omega=(\lr{\xi}\omega)\#{\tilde b}\#b$ there is ${\hat c}>0$ such that
\begin{equation}
\label{eq:B:sita}
\|\op{\lr{\xi}\omega}u\|\leq \|Bu\|/{\hat c}.
\end{equation}
Writing $(\lr{\xi}\omega)\#(\omega^{-1/2}\phi^{-n})=(1+k)\#(\omega^{1/2}\phi^{-n})
\#\lr{\xi}$  it results
\begin{equation}
\label{eq:B:sita:D}
(1-CM^{-1})\|\varPhi^{\flat}\lr{D} u\|\leq \|\op{\lr{\xi}\omega}\varPhi^{\sharp}u\|.
\end{equation}
Replacing $u$ by $\varPhi^{\sharp}u$ in \eqref{eq:B:sita} we obtain from \eqref{eq:Phi:kokan:B} and \eqref{eq:B:sita:D} that
\begin{lem}
\label{lem:B:sita}There are ${\hat c}>0$, $C>0$ such that
\begin{equation}
\label{eq:B:wo:D}
{\hat c}(1-CM^{-1/2})\|\varPhi^{\flat}\lr{D} u\|\leq \|\varPhi^{\sharp} Bu\|.
\end{equation}
\end{lem}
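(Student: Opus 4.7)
The plan is to simply assemble the three inequalities \eqref{eq:Phi:kokan:B}, \eqref{eq:B:sita}, and \eqref{eq:B:sita:D} that were established immediately before the lemma statement. All the substantive symbol-calculus work (commuting $B$ with $\varPhi^{\sharp}$, constructing the parametrix ${\tilde B}$, controlling $\lr{\xi}\omega$ by $B$ modulo $S(1,\bg)$) has already been done; what remains is bookkeeping.

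Concretely, I would first substitute $\varPhi^{\sharp} u$ in place of $u$ in \eqref{eq:B:sita}, yielding
\[
\|\op{\lr{\xi}\omega}\varPhi^{\sharp} u\|\leq \|B\varPhi^{\sharp} u\|/{\hat c}.
\]
Combining this with \eqref{eq:B:sita:D} gives
\[
{\hat c}(1-CM^{-1})\|\varPhi^{\flat}\lr{D} u\|\leq \|B\varPhi^{\sharp} u\|.
\]
Then I would invoke \eqref{eq:Phi:kokan:B}, rewritten as
\[
\|B\varPhi^{\sharp} u\|\leq \|\varPhi^{\sharp}Bu\|+CM^{-1/2}\|\varPhi^{\flat}\lr{D} u\|,
\]
to obtain
\[
{\hat c}(1-CM^{-1})\|\varPhi^{\flat}\lr{D} u\|\leq \|\varPhi^{\sharp}Bu\|+CM^{-1/2}\|\varPhi^{\flat}\lr{D} u\|.
\]
Absorbing the $CM^{-1/2}\|\varPhi^{\flat}\lr{D} u\|$ term on the left (its coefficient is dominated by $M^{-1/2}$, much larger than the discarded $M^{-1}$), I relabel the constants and arrive at
\[
{\hat c}(1-CM^{-1/2})\|\varPhi^{\flat}\lr{D} u\|\leq \|\varPhi^{\sharp}Bu\|,
\]
which is the claim.

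There is no real obstacle here, only a minor sanity check: one should confirm that the $\hat c$ appearing in the absorption step is the same $\hat c$ produced by \eqref{eq:B:sita} (so that the constants track through consistently), and that the substitution $u\mapsto \varPhi^{\sharp} u$ in \eqref{eq:B:sita} is legitimate, i.e.\ $\varPhi^{\sharp} u$ lies in whatever function space was implicit in \eqref{eq:B:sita}. Since $\varPhi^{\sharp}$ is a bounded pseudodifferential operator on $L^2$-based Sobolev spaces, this is routine.
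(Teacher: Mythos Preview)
Your proposal is correct and matches the paper's own argument essentially line for line: the paper simply says ``Replacing $u$ by $\varPhi^{\sharp}u$ in \eqref{eq:B:sita} we obtain from \eqref{eq:Phi:kokan:B} and \eqref{eq:B:sita:D}'' the lemma, which is precisely the chain of substitutions and absorption you wrote out.
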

Denoting $\varPhi^{\flat\flat}=\op{\omega\phi^{-n}}$ the same argument  shows that
\begin{equation}
\label{eq:B:wo:D:bis}
{\hat c}(1-CM^{-1/2})\|\varPhi^{\flat\flat}\lr{D} u\|\leq \|\varPhi  B u\|.
\end{equation}
It is clear that $b\#\phi^{-n}-\phi^{-n}\#b\in S(M^{-1/2}\phi^{-n}\omega^{-1}, \bg)$ from the same argument proving \eqref{eq:b:kokan:phi}. Write
\[
(1+k)\#(\omega^{1/2}\phi^n)\#\phi^{-n}\#(b\#\phi^{-n}-\phi^{-n}\#b)\#(\omega^{1/2}\phi^n)\#(1+{\tilde k})=r
\]
such that $(\omega^{-1/2}\phi^{-n})\#r\#(\omega^{-1/2}\phi^{-n})=\phi^{-n}\#(b\#\phi^{-n}-\phi^{-n}\#b)$ where $r\in S(M^{-1/2}, \bg)$. 
Therefore one has
\begin{gather*}
|(\varPhi Au, [B, \varPhi]Bu)|
\leq \|\varPhi^{\sharp} Au\|\|\op{r}\varPhi^{\sharp}Bu\|\\
\leq CM^{-1/2}\big(\|\varPhi^{\sharp} Au\|^2+\|\varPhi^{\sharp} Bu\|^2\big).
\end{gather*}
Repeating the same arguments again we have
\begin{equation}
\label{eq:B:kokan:Phi}
\begin{split}
|(\varPhi Au, [B, \varPhi]Bu)|+|([\varPhi, B]Au, \varPhi B u)|\\
\leq CM^{-1/2}\big(\|\varPhi^{\sharp} Au\|^2+\|\varPhi^{\sharp} B u\|^2\big).
\end{split}
\end{equation}
Write $
(1+k)\#(\omega^{1/2}\phi^n)\#\phi^{-n}\#\phi^{-n}\#(\dif_tb)\#\lr{\xi}^{-1}\#(\omega^{-1/2}\phi^n)\#(1+{\tilde k})=r$ 
such that $(\omega^{-1/2}\phi^{-n})\#r\#(\omega^{1/2}\phi^{-n})\#\lr{\xi}=\phi^{-n}\#\phi^{-n}\#(\dif_tb)$. Here we note
\begin{lem}
\label{lem:b:tbibun}Notations being as above we have $r-\lr{\xi}^{-1}\dif_tb\in S(M^{-1/2}, \bg)$.
\end{lem}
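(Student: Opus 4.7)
The plan is to follow the structure of the analogous argument for $\dif_t\ell$ that appears just above \eqref{eq:dif:ltophi}, where the error class was $S(M^{-1},g_\ep)$. Here the weaker error class $S(M^{-1/2},\bg)$ reflects two weakenings: derivatives of $b$ in preferred directions only gain $M^{-1/2}$ (by Corollary \ref{cor:b:tbibun}) rather than $M^{-1}$, and $b$ is only $\bg$-regular rather than $g_\ep$-regular. As a first step I would observe that the pointwise product of the seven factors on the left of the defining formula is $(\omega^{1/2}\phi^n)\cdot\phi^{-n}\cdot\phi^{-n}\cdot(\dif_t b)\cdot\lr{\xi}^{-1}\cdot(\omega^{-1/2}\phi^n)=\lr{\xi}^{-1}\dif_t b$, so the difference $r-\lr{\xi}^{-1}\dif_t b$ decomposes into (i) contributions involving $k$ or $\tilde k$ and (ii) the gap between pointwise products and Weyl compositions $\#$.

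For (i), Lemma \ref{lem:gyaku} places $k,\tilde k\in S(M^{-1},g_\ep)\subset S(M^{-1},\bg)$, and composing either of them against the remaining six factors (whose product $\lr{\xi}^{-1}\dif_t b$ is of order $1$ in $\bg$ by Corollary \ref{cor:b:tbibun}) produces an error in $S(M^{-1},\bg)\subset S(M^{-1/2},\bg)$. For (ii), I would expand every adjacent Weyl composition by the $\bg$-calculus as principal part plus Poisson bracket plus higher-order remainder. Brackets drawn from the subfamily $\{\phi^{\pm n},\omega^{\pm 1/2}\}$ are controlled directly by Proposition \ref{pro:dif:omephi:matome} and Lemma \ref{lem:phi:cho:seimitu} and, after the $\omega^{\pm 1/2}\phi^{\pm n}$ sandwich and the $\lr{\xi}^{-1}$ outside, contribute errors of the required order. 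The critical brackets are those in which a derivative falls on $\dif_t b$: by Corollary \ref{cor:b:tbibun}, the preferred derivative $\dif_{x_1}\dif_t\bar b\in S(M^{-1/2}\lr{\xi}^{1/2},\bg)$ in case $(a)$ (respectively $\dif_{\xi_j}\dif_t\bar b\in S(M^{-1/2}\lr{\xi}^{-1/2},\bg)$ for $j=1,d$ in case $(b)$) supplies the $M^{-1/2}$ intrinsically, while in non-preferred directions $\dif_{x_j}\dif_t\bar b$ or $\dif_{\xi_j}\dif_t\bar b$ is only of generic size but the paired derivative of $\phi^{\pm n}$ or $\omega^{\pm 1/2}$ in the conjugate variable brings the extra $M^{-1}$ by Lemma \ref{lem:phi:cho:seimitu}.

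Higher-order Weyl remainders are handled analogously and yield errors of at least the same order in $\bg$. Summing all contributions and absorbing the outer $\omega^{\pm 1/2}\phi^{\pm n}$ and $\lr{\xi}^{-1}$ restores weight $1$ and leaves the net gain $M^{-1/2}$, as claimed. The main obstacle is bookkeeping: a naive bound on each Poisson bracket term would only produce $S(1,\bg)$, so one must systematically invoke the preferred-direction refinements of Corollary \ref{cor:b:tbibun} and Lemma \ref{lem:phi:cho:seimitu} whenever a derivative lands on $\dif_t b$ or on a non-preferred slot of a $\phi,\omega$-factor, in order to extract the $M^{-1/2}$ from every contribution.
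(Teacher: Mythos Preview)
Your plan is correct and rests on the same mechanism as the paper's proof: the $M^{-1/2}$ comes from the single commutator between $\dif_t b$ and the $\omega,\phi$-factors, extracted by pairing the refined derivative estimates of Corollary~\ref{cor:b:tbibun} in the distinguished directions with those of Proposition~\ref{pro:dif:omephi:matome} and Lemma~\ref{lem:phi:cho:seimitu} (equivalently \eqref{eq:jd:phi:seimitu}) in the remaining ones.

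The only noteworthy difference is organizational. You propose to expand all seven $\#$-compositions simultaneously and sort the resulting bracket terms. The paper instead first collapses the three left factors and the three right factors, which are all $g_\ep$-symbols, writing
\[
(1+k)\#(\omega^{1/2}\phi^n)\#\phi^{-n}\#\phi^{-n}=\omega^{1/2}\phi^{-n}+l,\qquad
\lr{\xi}^{-1}\#(\omega^{-1/2}\phi^n)\#(1+{\tilde k})=\lr{\xi}^{-1}\omega^{-1/2}\phi^n+{\tilde l},
\]
with $l,{\tilde l}$ gaining a full $M^{-1}$ by the $g_\ep$-calculus alone. This reduces the problem to the single commutator $(\omega^{1/2}\phi^{-n})\#(\dif_t b)-(\dif_t b)\#(\omega^{1/2}\phi^{-n})$, for which the directional pairing gives $S(M^{-1/2}\lr{\xi}\omega^{1/2}\phi^{-n},\bg)$, and then a final $(\dif_t b)\#\lr{\xi}^{-1}-\lr{\xi}^{-1}\dif_t b\in S(M^{-1/2},\bg)$. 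The advantage of this grouping is that the $\bg$-calculus (with its weaker $M^{-1/2}$ gain) is invoked only once, at the step where $\dif_t b$ meets $\omega^{1/2}\phi^{-n}$; everything else stays in the $g_\ep$-world. Your direct expansion works too but requires more bookkeeping, and you should be aware that some of the ``non-preferred'' $M^{-1}$ gains on the $\phi,\omega$-side come from the $M^{-\ep(\al,\be)}$ factor in Proposition~\ref{pro:dif:omephi:matome} rather than from Lemma~\ref{lem:phi:cho:seimitu}.
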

\begin{proof}Write $(1+k)\#(\omega^{1/2}\phi^n)\#\phi^{-n}\#\phi^{-n}=\omega^{1/2}\phi^{-n}+l$ with $l\in S(M^{-1}\omega^{1/2}\phi^{-n}, g_{\ep})$ and $\lr{\xi}^{-1}\#(\omega^{-1/2}\phi^n)\#(1+{\tilde k})=\lr{\xi}^{-1}\omega^{-1/2}\phi^{n}+{\tilde l}$ with ${\tilde l}\in S(M^{-1}\lr{\xi}^{-1}\omega^{-1/2}\phi^{n}, g_{\ep})$ such that  $r=(\omega^{1/2}\phi^{-n}+l)\#(\dif_tb)\#(\lr{\xi}^{-1}\omega^{-1/2}\phi^n+{\tilde l})$. Thanks to Corollary \ref{cor:b:tbibun} and \eqref{eq:jd:phi:seimitu} it follows that
\[
(\omega^{1/2}\phi^{-n})\#(\dif_tb)-(\dif_tb)\#(\omega^{1/2}\phi^{-n})\in S(M^{-1/2}\lr{\xi}\omega^{1/2}\phi^{-n}, \bg)
\]
hence we have
\begin{equation}
\label{eq:db:atukai}
\begin{split}
r=(\dif_tb)\#(\omega^{1/2}\phi^{-n})\#(\lr{\xi}^{-1}\omega^{-1/2}\phi^n)+R
=(\dif_tb)\#\lr{\xi}^{-1}+{\tilde R}
\end{split}
\end{equation}
where ${\tilde R}\in S(M^{-1/2}, \bg)$. Since $(\dif_tb)\#\lr{\xi}^{-1}-\lr{\xi}^{-1}\dif_tb\in S(M^{-1/2}, \bg)$ the proof is completed.
\end{proof}
Since $\lr{\xi}^{-1}\dif_tb\in S(1, \bg)$ in view of Corollary \ref{cor:b:tbibun} from the $L^2$ boundedness theorem there are $c>0$ and $l\in\N$ such that 
\begin{equation}
\label{eq:c:pm:teigi}
\|\op{\lr{\xi}^{-1}\dif_tb}u\|\leq c\big|\lr{\xi}^{-1}\dif_tb\big|^{(l)}_{S(1, \bg)}\|u\|=c_1\|u\|.
\end{equation}
Then from \eqref{eq:db:atukai} it follows that
\begin{equation}
\label{eq:A:kokan:B}
|(\varPhi_n[A, B]u, \varPhi Bu)|\leq (c_1+CM^{-1/2})\|\varPhi^{\sharp}Bu\|\|\varPhi^{\flat}\lr{D} u\|.
\end{equation}
From \eqref{eq:ene:id:K}, \eqref{eq:A:to:K}, \eqref{eq:B:wo:D}, \eqref{eq:B:kokan:Phi} and \eqref{eq:A:kokan:B} we have
\begin{lem}
\label{lem:ene:B}We have
\begin{gather*}
2{\mathsf{Im}}(\varPhi B^2u, \varPhi Au)\geq \dif_t\|\varPhi Bu\|^2+2\theta\|\varPhi B_u\|^2\\
+n({\hat c}-c_1/n-CM^{-1/2})\|\varPhi^{\flat}\lr{D}u\|^2\\
+n(1-c_1/n-CM^{-1/2})\|\varPhi^{\sharp}Bu\|^2
-CM^{-1/2}\|\varPhi^{\sharp}Au\|^2.
\end{gather*}
\end{lem}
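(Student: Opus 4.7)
The plan is to mimic the derivation of Lemma \ref{lem:ene:L} but with $K=B$ in the energy identity \eqref{eq:ene:id:K}. First I expand $2{\mathsf{Im}}(\varPhi B^2u,\varPhi Au)$ into the five commutator contributions listed there, so that the result is $\dif_t\|\varPhi Bu\|^2+2\theta\|\varPhi Bu\|^2$ plus a diagonal commutator $2{\mathsf{Im}}([A,\varPhi]Bu,\varPhi Bu)$, the two cross-commutator terms in $[\varPhi,B]$, and the $[A,B]$ commutator term. The diagonal commutator is estimated from below by \eqref{eq:A:to:K} with $K=B$, producing the principal positive quantity $2n(1-CM^{-1})\|\varPhi^{\sharp}Bu\|^2$. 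The two cross-commutator terms are jointly bounded above by $CM^{-1/2}\bigl(\|\varPhi^{\sharp}Au\|^2+\|\varPhi^{\sharp}Bu\|^2\bigr)$ directly from \eqref{eq:B:kokan:Phi}. The $[A,B]$ commutator produces, via \eqref{eq:A:kokan:B}, a single mixed term $-2(c_1+CM^{-1/2})\|\varPhi^{\sharp}Bu\|\,\|\varPhi^{\flat}\lr{D}u\|$.

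The key step, and the only place the analysis departs structurally from Lemma \ref{lem:ene:L}, is that here the $\|\varPhi^{\flat}\lr{D}u\|^2$ term must emerge with a \emph{positive} coefficient in the conclusion. I produce it by splitting the principal term as
\[
2n(1-CM^{-1})\|\varPhi^{\sharp}Bu\|^2 \;=\; n\|\varPhi^{\sharp}Bu\|^2 \;+\; n(1-2CM^{-1})\|\varPhi^{\sharp}Bu\|^2,
\]
keeping the first summand as the $\|\varPhi^{\sharp}Bu\|^2$ contribution, and squaring \eqref{eq:B:wo:D} in the second summand to obtain $n(1-2CM^{-1})\|\varPhi^{\sharp}Bu\|^2\geq n{\hat c}^2(1-CM^{-1/2})^2(1-2CM^{-1})\|\varPhi^{\flat}\lr{D}u\|^2$. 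After absorbing the multiplicative error factors into the generic remainder $CM^{-1/2}$ and relabeling the resulting constant ${\hat c}^2$ again as ${\hat c}$ (permitted by the convention stated just after Definition \ref{dfn:norm}), this yields a lower bound of $n({\hat c}-CM^{-1/2})\|\varPhi^{\flat}\lr{D}u\|^2$.

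To deal with the mixed $[A,B]$ contribution I apply Young's inequality symmetrically, $2\|\varPhi^{\sharp}Bu\|\,\|\varPhi^{\flat}\lr{D}u\|\leq \|\varPhi^{\sharp}Bu\|^2+\|\varPhi^{\flat}\lr{D}u\|^2$, which gives
\[
2(c_1+CM^{-1/2})\|\varPhi^{\sharp}Bu\|\,\|\varPhi^{\flat}\lr{D}u\|\;\leq\; (c_1+CM^{-1/2})\bigl(\|\varPhi^{\sharp}Bu\|^2+\|\varPhi^{\flat}\lr{D}u\|^2\bigr),
\]
so that exactly the same correction $-c_1-CM^{-1/2}$ is subtracted from each of the two coefficients. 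Combined with the contribution $-CM^{-1/2}\|\varPhi^{\sharp}Bu\|^2$ from \eqref{eq:B:kokan:Phi}, this yields coefficient $n-c_1-CM^{-1/2}=n(1-c_1/n-CM^{-1/2})$ on $\|\varPhi^{\sharp}Bu\|^2$ and coefficient $n{\hat c}-c_1-CM^{-1/2}=n({\hat c}-c_1/n-CM^{-1/2})$ on $\|\varPhi^{\flat}\lr{D}u\|^2$, leaving the leftover $-CM^{-1/2}\|\varPhi^{\sharp}Au\|^2$ coming solely from \eqref{eq:B:kokan:Phi}.

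The only real obstacle is the bookkeeping of constants in the third step: I need to verify that the factor ${\hat c}^2(1-CM^{-1/2})^2(1-2CM^{-1})$ obtained from squaring \eqref{eq:B:wo:D} is indeed of the form ${\hat c}(1-CM^{-1/2})$ under the notational convention, and that the Young weight chosen for the $[A,B]$ cross term produces \emph{identical} corrections on the two square norms so that the $c_1/n$ correction appears symmetrically in both lines of the stated estimate. All remaining terms have order $M^{-1}$ or smaller and are absorbed into the generic constant $C$.
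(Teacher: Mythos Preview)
Your proof is correct and follows exactly the route the paper indicates: the paper's proof of Lemma \ref{lem:ene:B} consists of the single line ``From \eqref{eq:ene:id:K}, \eqref{eq:A:to:K}, \eqref{eq:B:wo:D}, \eqref{eq:B:kokan:Phi} and \eqref{eq:A:kokan:B} we have'', and your plan spells out precisely the splitting of $2n\|\varPhi^{\sharp}Bu\|^2$ and the symmetric Young step that make those five inputs combine to the stated inequality. One small caveat: the convention after Definition \ref{dfn:norm} only asserts that ${\hat c}$ is independent of $n,M,\gamma,\theta$, not that it may be freely renamed, so your relabeling ${\hat c}^2\mapsto{\hat c}$ is a slight abuse; it is harmless, however, since the only later use of ${\hat c}$ (in choosing $n$ before Proposition \ref{pro:bibun:ene:matome}) requires merely that it be a fixed positive constant.
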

%
%%%%%%%%%%%%%%%%%%%%%%%%%%%%

Since
\begin{gather*}
-2{\mathsf{Im}}(\varPhi Au, \varPhi u)=\dif_t\|\varPhi u\|^2+2\theta \|\varPhi u\|^2
+2{\mathsf{Im}}([A, \varPhi]u, \varPhi u)
\end{gather*}
replacing $u$ by $Au$ 
it follows from \eqref{eq:A:to:K} that
\begin{equation}
\label{eq:A:sita}
\begin{split}
-2{\mathsf{Im}}(\varPhi A^2u, \varPhi Au)
\geq \dif_t\|\varPhi Au\|^2+2\theta \|\varPhi A u\|^2\\
+2n(1-CM^{-1/2})\|\varPhi^{\sharp}A u\|^2.
\end{split}
\end{equation}
Then from Lemmas \ref{lem:ene:L} and \ref{lem:ene:B} we conclude
\begin{prop}
\label{pro:ene:ALB}
We have
\begin{gather*}
2{\mathsf{Im}}(\varPhi (-A^2+L^2+B^2)u, \varPhi Au)\geq \dif_t\big(\|\varPhi Lu\|^2+\|\varPhi B u\|^2+\|\varPhi Au\|^2\big)\\
+(2\theta-CM)\big(\|\varPhi L u\|^2+\|\varPhi B u\|^2+\|\varPhi Au\|^2\big)\\
+2n(1-|\kappa|-c_0/2n-CM^{-1/2})\|\varPhi^{\sharp}L u\|^2\\
+2n(1-|\kappa|-CM^{-1/2})\|\varPhi^{\sharp} Au\|^2\\
+n({\hat c}-c_0/n-c_1/n-CM^{-1/2})\|\varPhi^{\flat}\lr{D}u\|^2\\
+n(1-c_1/n-CM^{-1/2})\|\varPhi^{\sharp}B u\|^2.
\end{gather*}
\end{prop}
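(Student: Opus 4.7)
The proof is essentially a linear combination of three already-established ingredients: Lemma \ref{lem:ene:L} with $K=L$, Lemma \ref{lem:ene:B}, and the $A^2$ identity \eqref{eq:A:sita}. Each supplies the energy identity for one of the squared operators on the left-hand side of the proposition, so my plan is simply to add them term by term and identify the resulting coefficient of each norm on the right.

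First I would assemble the time-derivative and dissipation terms. The three $\dif_t$ contributions combine into $\dif_t(\|\varPhi Lu\|^2+\|\varPhi Bu\|^2+\|\varPhi Au\|^2)$. The $L$-estimate contributes $(2\theta-CM)\|\varPhi Lu\|^2$ and a stray $-CM\|\varPhi Au\|^2$, while Lemma \ref{lem:ene:B} and \eqref{eq:A:sita} each contribute $2\theta$ times $\|\varPhi Bu\|^2$ and $\|\varPhi Au\|^2$ respectively. Using the trivial inequality $2\theta\geq 2\theta-CM$, these collect into the stated $(2\theta-CM)(\|\varPhi Lu\|^2+\|\varPhi Bu\|^2+\|\varPhi Au\|^2)$.

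Next I would collect the positive-coefficient squared norms on the right. The $\|\varPhi^{\sharp}Lu\|^2$ and $\|\varPhi^{\sharp}Bu\|^2$ coefficients appear unchanged from Lemmas \ref{lem:ene:L} and \ref{lem:ene:B} respectively. For $\|\varPhi^{\sharp}Au\|^2$ one combines $-2n(|\kappa|+CM^{-1})$ from Lemma \ref{lem:ene:L}, $-CM^{-1/2}$ from Lemma \ref{lem:ene:B}, and $+2n(1-CM^{-1/2})$ from \eqref{eq:A:sita}; the extra $-CM^{-1/2}$ is absorbed into the constant multiplying $M^{-1/2}$ (legitimate for $n\geq 1$), giving $2n(1-|\kappa|-CM^{-1/2})$. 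For $\|\varPhi^{\flat}\lr{D}u\|^2$ I combine $-(c_0+CM^{-1})$ from Lemma \ref{lem:ene:L} with $n({\hat c}-c_1/n-CM^{-1/2})$ from Lemma \ref{lem:ene:B}, rewriting the former as $-n(c_0/n+CM^{-1}/n)$ and absorbing $CM^{-1}/n$ into $CM^{-1/2}$ to obtain $n({\hat c}-c_0/n-c_1/n-CM^{-1/2})$.

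This is an exercise in addition and constant bookkeeping rather than fresh analysis; no new pseudodifferential estimates are required. The only point requiring minor care is the absorption of the various $CM^{-1}$ and $CM^{-1/2}$ error terms into a single $CM^{-1/2}$ in each line, which is valid because $M\geq 1$ and because the constants $c_0$, $c_1$, ${\hat c}$, $|\kappa|$ are bounded and independent of $M$, $\gamma$, $n$, $\theta$. There is therefore no substantive obstacle; the real work has already been done in proving Lemmas \ref{lem:ene:L} and \ref{lem:ene:B}.
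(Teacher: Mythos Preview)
Your proposal is correct and follows exactly the paper's approach: the paper simply states ``Then from Lemmas \ref{lem:ene:L} and \ref{lem:ene:B} we conclude'' (together with the just-derived inequality \eqref{eq:A:sita}), leaving the term-by-term addition implicit. Your bookkeeping of the coefficients and the absorption of the various $CM^{-1}$ errors into $CM^{-1/2}$ is accurate; recall the paper's convention that the constants $C$ may depend on $n$, which justifies those absorptions.
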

%

%%%%%%%%%%%
%%%%%%%%%%%
Since $-2(\varPhi A u, \varPhi u)
\geq \dif_t\|\varPhi u\|^2+2\theta \|\varPhi u\|^2
$ if $CM^{-1/2}\leq 1$ then 
\begin{equation}
\label{eq:theta:nijyo}
\|\varPhi Au\|^2\geq \theta \dif_t\|\varPhi u\|^2+\theta^2\|\varPhi u\|^2.
\end{equation}
Consider the lower order term $B_0D_t+B_1=B_0A+B_1+i\theta B_0$. Write
\begin{gather*}
(1+k)\#(\omega^{1/2}\phi^n)\#\phi^{-n}\#\phi^{-n}\#{\tilde b_1}\#\lr{\xi}^{-1}\#(\omega^{-1/2}\phi^n)\#(1+{\tilde k})=r
\end{gather*}
with $r\in S(1, \bg)$ such that $(\omega^{-1/2}\phi^{-n})\#r\#(\omega^{1/2}\phi^{-n})\#\lr{\xi}=\phi^{-n}\#\phi^{-n}\#{\tilde b}_1$. We make a closer look at $r$. 
\begin{lem}
\label{lem:teikai:seimitu}Notations being as above we have $r-\lr{\xi}^{-1}{\tilde b_1}\in S(M^{-1/2}, \bg)$.
\end{lem}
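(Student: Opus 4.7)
The plan mirrors the proof of Lemma \ref{lem:b:tbibun}, with ${\tilde b_1}$ in the role of $\dif_t b$. First I expand
\[
(1+k)\#(\omega^{1/2}\phi^n)\#\phi^{-n}\#\phi^{-n}=\omega^{1/2}\phi^{-n}+l,\quad \lr{\xi}^{-1}\#(\omega^{-1/2}\phi^n)\#(1+{\tilde k})=\lr{\xi}^{-1}\omega^{-1/2}\phi^n+{\tilde l}
\]
with $l\in S(M^{-1}\omega^{1/2}\phi^{-n},g_{\ep})$ and ${\tilde l}\in S(M^{-1}\lr{\xi}^{-1}\omega^{-1/2}\phi^{n},g_{\ep})$, exactly as in Lemma \ref{lem:b:tbibun}, so that $r=(\omega^{1/2}\phi^{-n}+l)\#{\tilde b_1}\#(\lr{\xi}^{-1}\omega^{-1/2}\phi^n+{\tilde l})$. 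Every cross term in this expansion picks up at least one factor $M^{-1}$ from $l$ or ${\tilde l}$; since ${\tilde b_1}\in S(\lr{\xi},\bg)$ and $\omega^{1/2}\phi^{-n}\cdot\lr{\xi}^{-1}\omega^{-1/2}\phi^n=\lr{\xi}^{-1}$ pointwise, each cross term lies in $S(M^{-1},\bg)\subset S(M^{-1/2},\bg)$.

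For the main piece $(\omega^{1/2}\phi^{-n})\#{\tilde b_1}\#(\lr{\xi}^{-1}\omega^{-1/2}\phi^n)$, I would mimic the commutation step in Lemma \ref{lem:b:tbibun} and establish
\[
(\omega^{1/2}\phi^{-n})\#{\tilde b_1}-{\tilde b_1}\#(\omega^{1/2}\phi^{-n})\in S(M^{-1/2}\lr{\xi}\omega^{1/2}\phi^{-n},\bg)
\]
by combining Proposition \ref{pro:dif:omephi:matome} and \eqref{eq:jd:phi:seimitu} with the derivative bounds on ${\tilde b_1}$. Composing this commutator with $\lr{\xi}^{-1}\omega^{-1/2}\phi^n$ on the right and using $\omega\geq\lr{\xi}^{-1/2}$ moves the result into $S(M^{-1/2},\bg)$, so that $r={\tilde b_1}\#(\omega^{1/2}\phi^{-n})\#(\lr{\xi}^{-1}\omega^{-1/2}\phi^n)+R$ with $R\in S(M^{-1/2},\bg)$.

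By Proposition \ref{pro:dif:omephi:matome}, the pointwise product $\omega^{1/2}\phi^{-n}\cdot\lr{\xi}^{-1}\omega^{-1/2}\phi^n$ equals $\lr{\xi}^{-1}$ and the first-order Poisson correction $\{\omega^{1/2}\phi^{-n},\lr{\xi}^{-1}\omega^{-1/2}\phi^n\}$ picks up a factor $M^{-1}$, combined with $\omega^{-2}\lr{\xi}^{-1}\leq 1$; hence $(\omega^{1/2}\phi^{-n})\#(\lr{\xi}^{-1}\omega^{-1/2}\phi^n)=\lr{\xi}^{-1}+e$ with $e\in S(M^{-1}\lr{\xi}^{-1},\bg)$, so $r={\tilde b_1}\#\lr{\xi}^{-1}+{\tilde R}$ with ${\tilde R}\in S(M^{-1/2},\bg)$. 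The leading Poisson bracket $\{{\tilde b_1},\lr{\xi}^{-1}\}$ belongs to $S(\lr{\xi}^{-1/2},\bg)$ via $\dif_x{\tilde b_1}\in S(\lr{\xi}^{3/2},\bg)$ and $\dif_\xi\lr{\xi}^{-1}\in S(\lr{\xi}^{-2},\bg)$, and \eqref{eq:seigen} yields $\lr{\xi}^{-1/2}\leq M^{-2}\leq M^{-1/2}$, so ${\tilde b_1}\#\lr{\xi}^{-1}-\lr{\xi}^{-1}{\tilde b_1}\in S(M^{-1/2},\bg)$, which closes the argument.

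The step I expect to be the main obstacle is the displayed commutator estimate, since ${\tilde b_1}$ is given only in $S(\lr{\xi},\bg)$, without the finer derivative structure that Corollary \ref{cor:b:tbibun} provides for $\dif_tb$. I would overcome this by decomposing ${\tilde b_1}$ according to its construction: the dominant part $(b_1-{\bar\lam})\lr{\xi}$ lies in $S(\lr{\xi},G)$ by Lemma \ref{lem:kakutyo:a}, so $\dif_x\bigl((b_1-{\bar\lam})\lr{\xi}\bigr)\in S(M\lr{\xi},G)$; combining this $G$-refinement with the $g_{\ep}$-bounds on $\dif_\xi(\omega^{1/2}\phi^{-n})$ from Proposition \ref{pro:dif:omephi:matome}, together with $\omega\geq\lr{\xi}^{-1/2}$ and $\lr{\xi}\geq M^4$, produces the desired $M^{-1/2}$ gain, while the residual ${\tilde r}$ coming from the Moyal expansion of $b\#b$ is handled analogously since it contributes only at second Moyal order.
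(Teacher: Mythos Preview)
Your overall strategy—mirror Lemma~\ref{lem:b:tbibun}, reduce to the commutator $(\omega^{1/2}\phi^{-n})\#{\tilde b_1}-{\tilde b_1}\#(\omega^{1/2}\phi^{-n})$, and decompose ${\tilde b_1}$ into a ``nice'' part plus the Moyal remainder ${\tilde r}=b\#b-b^2$—is exactly what the paper does. The treatment of the dominant part and of the auxiliary steps (the cross terms with $l,{\tilde l}$, the final reduction ${\tilde b_1}\#\lr{\xi}^{-1}-\lr{\xi}^{-1}{\tilde b_1}$) is fine.

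The gap is your handling of ${\tilde r}$. The sentence ``handled analogously since it contributes only at second Moyal order'' does not deliver what you need. The symbol ${\tilde r}$ lies only in $S(\lr{\xi},\bg)$ with no built-in $M$-gain, and in case $(a)$ the dangerous piece of the Poisson bracket is $\dif_{\xi_1}(\omega^{1/2}\phi^{-n})\cdot\dif_{x_1}{\tilde r}$: the first factor carries \emph{no} $M^{-1}$ (the $M^{-1}$ refinement in \eqref{eq:jd:phi:seimitu} and Lemma~\ref{lem:phi:cho:seimitu} is only for $\dif_{\xi_j}$ with $j\neq 1$), so the $M^{-1/2}$ must come from $\dif_{x_1}{\tilde r}$. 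The analogous obstruction in case $(b)$ is $\dif_{x_j}(\omega^{1/2}\phi^{-n})\cdot\dif_{\xi_j}{\tilde r}$ for $j=1,d$. Neither ``analogous to $(b_1-{\bar\lam})\lr{\xi}\in S(\lr{\xi},G)$'' (${\tilde r}$ is \emph{not} a $G$-symbol, since derivatives of ${\bar b}$ bring in ${\bar b}^{-1}\le \lr{\xi}^{1/2}$) nor ``second Moyal order'' supplies this.

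What is actually required—and what the paper invokes—is Corollary~\ref{cor:lam:2}: $\dif_{x_1}{\bar b}\in S(M^{-1/2},\bg)$ in case $(a)$, and $\dif_{\xi_j}{\bar b}\in S(M^{-1/2}\lr{\xi}^{-1},\bg)$ for $j=1,d$ in case $(b)$. Since ${\tilde r}$ is built from products of derivatives of $b$, differentiating ${\tilde r}$ in the distinguished direction hits a factor of $b$ and inherits this $M^{-1/2}$ improvement; combined with Lemma~\ref{lem:b:2} one obtains $\dif_{x_1}{\tilde r}\in S(M^{-1/2}\lr{\xi}^{3/2},\bg)$, $(a)$, respectively $\dif_{\xi_j}{\tilde r}\in S(M^{-1/2}\lr{\xi}^{1/2},\bg)$ for $j=1,d$, $(b)$. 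With this in hand your commutator estimate goes through and the rest of your argument closes.
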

\begin{proof}First note that ${\tilde b_1}=d_1-{\tilde r}$ with some $d_1\in S(\lr{\xi}, g_{\ep})$ and ${\tilde r}$ given in \eqref{eq:q:to:B}. Since ${\tilde r}=b\#b-b^2$ thanks to Corollary \ref{cor:lam:2} it follows that 
\begin{gather*}
\dif_x^{\al}\dif_{\xi}^{\be}\dif_{x_1}{\bar b}\in S(M^{-1/2}\lr{\xi}^{(|\al|-|\be|)/2}, \bg),\quad (a),\\
\dif_x^{\al}\dif_{\xi}^{\be}\dif_{\xi_j}{\bar b}\in S(M^{-1/2}\lr{\xi}^{-1}\lr{\xi}^{(|\al|-|\be|)/2}, \bg), \quad j=1, d,\;\;(b)
\end{gather*}
which together with Lemma \ref{lem:b:2} proves that $\dif_{x_1}{\tilde r}\in S(M^{-1/2}\lr{\xi}^{3/2}, \bg)$, $(a)$ and $\dif_{\xi_j}{\tilde r}\in S(M^{-1/2}\lr{\xi}^{1/2}, \bg)$ for $j=1, d$, $(b)$. Applying the same arguments proving Lemma \ref{lem:b:tbibun} we conclude the assertion.
\end{proof}
Since $\op{\lr{\xi}^{-1}{\tilde b_1}}$ is $L^2$ bounded, denoting the bound by ${\bar c}$,  we have
\begin{equation}
\label{eq:def:bar:c}
\|\op{\lr{\xi}^{-1}{\tilde b_1}}u\|\leq {\bar c}\,\|u\|
\end{equation}
hence
\begin{equation}
\label{eq:B:1:A}
2|(\varPhi B_1 u, \varPhi Au)|
\leq ({\bar c}+CM^{-1/2})(\|\varPhi^{\sharp} Au\|^2+\|\varPhi^{\flat}\lr{D} u\|^2).
\end{equation}
Writing $\phi^{-n}\#{r_0}\#\phi^{-n}=\phi^{-n}\#\phi^{-n}\#{\tilde b_0}$ with $r_0\in S(1, \bg)$ it results
\begin{equation}
\label{eq:B:0:A}
2|(\varPhi B_0 Au, \varPhi Au)|
\leq CM\|\varPhi Au\|^2.
\end{equation}
Similarly one has
\begin{equation}
\label{eq:B:the:A}
2\theta |(\varPhi B_0 u, \varPhi Au)|
\leq CM(\theta^{3/2}\|\varPhi u\|^2+\theta^{1/2}\|\varPhi Au\|^2).
\end{equation}
It is also easy to see that
\begin{gather*}
2|(\varPhi (-A^2+L+B^2)u, \varPhi Au)|\leq M^{1/2}\|\varPhi^{\flat}(-A^2+L+B^2)u\|^2\\
+M^{-1/2}(1+CM^{-1/2})\|\varPhi^{\sharp}Au\|^2.
\end{gather*}
Therefore from Proposition \ref{pro:ene:ALB} and \eqref{eq:theta:nijyo} we arrive at 
\begin{gather*}
M^{1/2}\|\varPhi^{\flat}{\hat P_{\theta}}u\|^2\geq \dif_t\big(\|\varPhi L u\|^2+\|\varPhi B u\|^2+\|\varPhi Au\|^2+\theta \|\varPhi u\|^2\big)\\
+\theta(1-CM^2\theta^{-1}-CM\theta^{-1/2})\big(\|\varPhi L u\|^2+\|\varPhi B u\|^2+\|\varPhi Au\|^2\big)\\
+\theta^2(1-CM\theta^{-1/2})\|\varPhi u\|^2\\
+2n(1-|\kappa|-c_0/2n-CM^{-1/2})\|\varPhi^{\sharp}L u\|^2\\
+2n(1-|\kappa|-{\bar c}/2n-CM^{-1/2})\|\varPhi^{\sharp}Au\|^2\\
+n({\hat c}-c_0/n-c_1/n-{\bar c}/n-CM^{-1/2})\|\varPhi^{\flat}\lr{D}u\|^2\\
+n(1-c_1/n-CM^{-1/2})\|\varPhi^{\sharp}B u\|^2.
\end{gather*}
Here writing $(\omega^{1/2}\phi^{-n})\#\lr{\xi}=(1+k)\#(\omega^{1/2}\lr{\xi}^{1/4})\#\phi^{-n}\#\lr{\xi}^{3/4}$ and noting $\omega^{1/2}\lr{\xi}^{1/4}\geq 1$ one has by Lemma \ref{lem:Ni:hon:2} 
\[
\|\varPhi^{\flat}\lr{D} u\|\geq (1-CM^{-1})\|\varPhi\lr{D}^{3/4}u\|.
\]
Similarly we see $\|\varPhi^{\flat\flat}\lr{D} u\|\geq (1-CM^{-1})\|\varPhi\lr{D}^{1/2}u\|$.
Thus we first choose $n$ such that
\begin{gather*}
1-|\kappa|-c_0/2n>0,\quad 1-|\kappa|-{\bar c}/2n>0,\\
{\hat c}-c_0/n-c_1/n-{\bar c}/n>0,\quad 1-c_1/n>0
\end{gather*}
and {\it fix }such a $n$.  Next we choose $M$ such that the above inequalities remain to be positive after subtracting $CM^{-1/2}$ from each inequality 
and {\it fix} such a $M$ then choose $\gamma$ such that $\gamma \geq M^{4}$ and $\gamma \geq {\bar\lam}M^{2}$ and {\it fix} $\gamma$, still $\theta$ is  assumed   to be free. Once $M$ and $\gamma$ are fixed we have 
\[
g_0/C\leq G\leq Cg_0,\quad \olr{\xi}^s/C_s\leq \lr{\xi}^s\leq C_s\olr{\xi}^s
\]
where $g_0=|dx|^2+\olr{\xi}^{-2}|d\xi|^2$. Now summarize what we have proved
\begin{prop}
\label{pro:bibun:ene:matome}There exist $C>0$, $c>0$ and $\theta_0>0$ such that
\begin{gather*}
C\|\varPhi^{\flat}{\hat P_{\theta}}u\|^2\geq \dif_t\big(\|\varPhi L u\|^2+\|\varPhi B u\|^2+\|\varPhi Au\|^2+\theta \|\varPhi u\|^2\big)\\
+c\,\theta\big(\|\varPhi L u\|^2+\|\varPhi B u\|^2+\|\varPhi Au\|^2+\|\varPhi \olr{D}^{1/2}u\|^2+\theta \|\varPhi u\|^2\big)\\
+c\,\big(\|\varPhi^{\sharp}L u\|^2+\|\varPhi^{\sharp}Au\|^2+\|\varPhi^{\sharp}B u\|^2+\|\varPhi\olr{D}^{3/4}u\|^2\big)
\end{gather*}
for $\theta\geq \theta_0$.
\end{prop}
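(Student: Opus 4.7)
The plan is to assemble the estimate by pairing $2\operatorname{Im}(\varPhi \hat P_\theta u,\varPhi Au)$ and controlling each piece using the results already collected. Writing $\hat P_\theta = -A^2+L^2+B^2 + B_0 A + B_1+i\theta B_0$, Proposition~\ref{pro:ene:ALB} immediately handles the principal part $-A^2+L^2+B^2$, while the lower-order contributions are absorbed via \eqref{eq:B:1:A}, \eqref{eq:B:0:A}, \eqref{eq:B:the:A}. For the left-hand side I would apply Cauchy--Schwarz in the weighted form already used, namely
\[
2|(\varPhi \hat P_\theta u,\varPhi Au)| \leq M^{1/2}\|\varPhi^{\flat}\hat P_\theta u\|^2 + M^{-1/2}(1+CM^{-1/2})\|\varPhi^{\sharp}Au\|^2,
\]
so that the dangerous $\|\varPhi^\sharp Au\|^2$ on the right-hand side comes with coefficient $O(M^{-1/2})$ and is easily absorbed into the positive term produced by Proposition~\ref{pro:ene:ALB}.

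Next I would inject \eqref{eq:theta:nijyo} with a suitable multiplier to manufacture the missing $\theta \partial_t\|\varPhi u\|^2 + \theta^2\|\varPhi u\|^2$, so the final master inequality has a genuine time-derivative of $\|\varPhi Lu\|^2+\|\varPhi Bu\|^2+\|\varPhi Au\|^2 + \theta\|\varPhi u\|^2$ together with four positive semi-definite ``good'' terms weighted by $\varPhi^{\sharp}$ and $\varPhi^{\flat}$. The $CM\|\varPhi Au\|^2$ and $CM\theta^{3/2}\|\varPhi u\|^2$ remainders coming from the $B_0$ estimates are non-trivial: they will be controlled only by taking $\theta$ large compared to $M^2$, which fixes $\theta_0$ at the end.

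The parameter selection proceeds in the order $n,M,\gamma,\theta$, exactly as the paper foreshadows. Since $|\kappa|<1$ by \eqref{eq:kap:okisa} and $\hat c>0$ is the fixed constant of \eqref{eq:B:sita}, for sufficiently large but fixed $n$ the four coefficients
\[
1-|\kappa|-\tfrac{c_0}{2n},\quad 1-|\kappa|-\tfrac{\bar c}{2n},\quad \hat c-\tfrac{c_0+c_1+\bar c}{n},\quad 1-\tfrac{c_1}{n}
\]
are strictly positive. Then $M$ is chosen large enough that each of these remains positive after subtracting the $CM^{-1/2}$ defects, and $\gamma\ge\max(M^4,\bar\lambda M^2)$ is fixed so that the constraints \eqref{eq:seigen} and \eqref{eq:para:seigen} hold. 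Once $M,\gamma$ are fixed the weights satisfy $G\sim g_0$ and $\lr{\xi}\sim\olr{\xi}$, which allows us to replace $\lr{\xi}$ by $\olr{\xi}$ in the final statement.

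The last step upgrades the $\|\varPhi^\flat \lr{D} u\|^2$ and $\|\varPhi^{\flat\flat}\lr{D}u\|^2$ terms to $\|\varPhi\olr{D}^{3/4}u\|^2$ and $\|\varPhi\olr{D}^{1/2}u\|^2$, respectively, using the factorization $(\omega^{1/2}\phi^{-n})\#\lr{\xi}=(1+k)\#(\omega^{1/2}\lr{\xi}^{1/4})\#\phi^{-n}\#\lr{\xi}^{3/4}$ noted just above the proposition, combined with $\omega^{1/2}\lr{\xi}^{1/4}\ge 1$ and Lemma~\ref{lem:Ni:hon:2}. The main obstacle I anticipate is purely organizational: keeping the four error-absorbing chains (lower order terms, $M$-losses from commutators, $\theta$-losses from the $B_0$ piece, and the equivalence between $\varPhi^\flat\lr{D}$ and $\varPhi\olr{D}^{3/4}$) all flowing the same way so that a single final $\theta_0$, depending only on $n,M,\gamma$, makes the coefficients on the right-hand side uniformly positive for all $\theta\ge \theta_0$.
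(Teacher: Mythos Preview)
Your proposal is correct and follows essentially the same route as the paper: pair $2\operatorname{Im}(\varPhi\hat P_\theta u,\varPhi Au)$, invoke Proposition~\ref{pro:ene:ALB} for the principal part, absorb the lower-order terms by \eqref{eq:B:1:A}--\eqref{eq:B:the:A} and the left side by the weighted Cauchy--Schwarz inequality, feed in \eqref{eq:theta:nijyo}, then fix $n$, $M$, $\gamma$ in that order and finally take $\theta$ large. The only point to make explicit is that the $\|\varPhi\olr{D}^{1/2}u\|^2$ contribution does not appear directly in the master inequality; you obtain it by sacrificing part of the $\theta\|\varPhi Bu\|^2$ term through \eqref{eq:B:wo:D:bis} before applying the factorization argument, exactly as the paper does.
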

Next we estimate $\olr{D}^su$. Since $\olr{D}^s{\hat P_{\theta}}={\hat P_{\theta}}\olr{D}^s+[\olr{D}^s, {\hat P}]$ we study $[\olr{D}^s, L^2]$. Since $\ell\#\ell-\ell^2\in S(1, g_{0})$ is clear then
\[
\olr{\xi}^s\#\ell\#\ell-\ell\#\ell\#\olr{\xi}^s-(\olr{\xi}^2\#\ell^2-\ell^2\#\olr{\xi}^s)\in S(\olr{\xi}^s, g_{0}).
\]
It is also easy to see that $
\olr{\xi}^s\#\ell^2-\ell^2\#\olr{\xi}^s=a\ell+r$ where $a\in S(\olr{\xi}^s, g_0)$ and $r\in S(\olr{\xi}^s, g_{0})$. Since one can write $
a\ell=(a\olr{\xi}^{-s})\#\ell\#\olr{\xi}^s+{\tilde r}$ 
with ${\tilde r}\in S(\olr{\xi}^s, g_{0})$ we conclude that
\begin{equation}
\label{eq:l:D:s}
\begin{split}
\big|(\varPhi[\olr{D}^s, L^2]u, \varPhi A\olr{D}^s u)\big|\leq C\big(\|\varPhi A\olr{D}^su\|^2\\
+\|\varPhi L \olr{D}^su\|^2+\|\varPhi \olr{D}^su\|^2\big).
\end{split}
\end{equation}
%
%We now prove 
%
 \begin{lem}
 \label{lem:b:D:s}We have
 \begin{gather*}
 \big|(\varPhi [\olr{D}^s, B^2]u, \varPhi A\olr{D}^s u)\big|\leq C\|\varPhi A\olr{D}^su\|
\|\varPhi B\olr{D}^su\|.
 \end{gather*}
 \end{lem}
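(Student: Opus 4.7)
The plan is to show that the principal symbol of $[\olr{D}^s, B^2]$, computed via the Weyl calculus in $\bg$, factors as an $L^2$-bounded operator acting on $B\olr{D}^s$, and then to absorb the lower-order remainder using the ellipticity $b\geq\sqrt{\bar\lam}\,\olr{\xi}^{1/2}$ that follows from $b^2 = q+\bar\lam\olr{\xi}\geq\bar\lam\olr{\xi}$.

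I would begin by computing the Poisson-bracket principal part
\[
-i\{\olr{\xi}^s, b^2\} = 2is\,\olr{\xi}^{s-2}\xi\cdot b\,\partial_x b = R_0\cdot(b\olr{\xi}^s),\qquad R_0 := 2is\,\olr{\xi}^{-2}\xi\cdot\partial_x b,
\]
and noting that $R_0\in S(1,\bg)$ by Lemma \ref{lem:b:2} (as $\olr{\xi}^{-2}\xi\in S(\olr{\xi}^{-1},\bg)$ and $\partial_x b\in S(\olr{\xi},\bg)$), so $\op{R_0}$ is $L^2$-bounded. Since $b\olr{\xi}^s$ is the principal symbol of $B\olr{D}^s$ and $-i\{\olr{\xi}^s, b\#b\}$ is the principal term of the $\bg$-Weyl expansion of $\olr{\xi}^s\#(b\#b)-(b\#b)\#\olr{\xi}^s$, the calculus gives
\[
[\olr{D}^s, B^2] = \op{R_0}\, B\olr{D}^s + \op{R},
\]
with $\op{R}$ of strictly lower order (collecting the higher Weyl-skew corrections together with the factorization error $R_0\cdot b\olr{\xi}^s\sim\op{R_0}\op{b\olr{\xi}^s}\sim\op{R_0}B\olr{D}^s$).

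Pairing with $\varPhi A\olr{D}^s u$, commuting $\op{R_0}$ through $\varPhi$ modulo a lower-order bounded operator, and invoking Cauchy--Schwarz produces the principal term $C\|\varPhi B\olr{D}^s u\|\|\varPhi A\olr{D}^s u\|$ together with a residual of the form $C\|\varPhi\olr{D}^{s'}u\|\|\varPhi A\olr{D}^s u\|$ for some $s'\leq s+1/2$ coming from $\op{R}$. The main obstacle will be removing this residual, which is not present in the stated bound. My plan is to invoke \eqref{eq:B:wo:D:bis} with $\olr{D}^s u$ in place of $u$, which yields $\|\varPhi^{\flat\flat}\olr{D}^{s+1}u\|\leq C\|\varPhi B\olr{D}^s u\|$; since $\omega\geq\olr{\xi}^{-1/2}$ implies $\omega\phi^{-n}\olr{\xi}^{s+1}\geq\phi^{-n}\olr{\xi}^{s'}$ for any $s'\leq s+1/2$, Lemma \ref{lem:m:1:2} then gives $\|\varPhi\olr{D}^{s'}u\|\leq C\|\varPhi B\olr{D}^s u\|$, closing the estimate.
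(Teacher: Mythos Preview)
Your strategy diverges from the paper's, and it has a genuine gap at the ``strictly lower order'' step. The metric $\bg$ satisfies $\bg=\bg^{\sigma}$, so its Planck function is $1$: Weyl compositions and commutators in $S(\cdot,\bg)$ carry \emph{no} automatic gain. In particular, once you pass from the pointwise product $R_0\cdot b\olr{\xi}^s$ to the operator product $\op{R_0}\,B\,\olr{D}^s=\op{R_0\#b\#\olr{\xi}^s}$, the factorization error $R_0\#b\#\olr{\xi}^s-R_0\cdot b\olr{\xi}^s$ lies only in $S(b\olr{\xi}^s,\bg)$, the \emph{same} class as the principal term (since $R_0\in S(1,\bg)$ is not a classical symbol---it contains $\partial_x b$). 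Hence $\op{R}$ is not of lower order, and your absorption via \eqref{eq:B:wo:D:bis}, which needs a residual of order at most $s+1/2$, cannot close: through the crude bound $b\precsim\lr{\xi}$ you would face $\|\varPhi\olr{D}^{s+1}u\|$, which is not dominated by $\|\varPhi B\olr{D}^su\|$.

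The paper avoids this entirely. It uses the Leibniz identity $[\olr{D}^s,B^2]=[\olr{D}^s,B]B+B[\olr{D}^s,B]$ and, crucially, Lemma~\ref{lem:b:2} (the improved first-derivative estimate $\partial_x^{\al}\partial_{\xi}^{\be}\bar b\in S(\lr{\xi}^{-|\be|},\bg)$ for $|\al+\be|=1$) to get the \emph{full} commutator symbol $\olr{\xi}^s\#b-b\#\olr{\xi}^s\in S(\olr{\xi}^s,\bg)$, not just its principal part. Composing with $b$ gives a symbol $r\in S(b\olr{\xi}^s,\bg)$, and then the exact two-sided inverse $\tilde b$ from Proposition~\ref{pro:b:lam} lets one write $r=(r\#\olr{\xi}^{-s}\#\tilde b)\#b\#\olr{\xi}^s$ with $r\#\olr{\xi}^{-s}\#\tilde b\in S(1,\bg)$. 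This factors $[\olr{D}^s,B]B$ exactly as an $L^2$-bounded operator times $B\olr{D}^s$, with no residual to absorb. Your argument can be repaired by abandoning the principal-symbol splitting and instead invoking $\tilde b$ on the full symbol of $[\olr{D}^s,B^2]$---but that is precisely the paper's route.
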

 \begin{proof}Note that $[\olr{D}^s, B^2]=[\olr{D}^s, B]B+B[\olr{D}^s, B]$. 
 From Lemma \ref{lem:b:2} we see $\olr{\xi}^s\#b-b\#\olr{\xi}^s\in S(\olr{\xi}^s, \bg)$. Thanks to Proposition \ref{pro:lam:1} one has $r=(\olr{\xi}^s\#b-b\#\olr{\xi}^s)\#b\in S(b\olr{\xi}^s, \bg)$. Applying Proposition \ref{pro:b:lam} one can write
 \[
(\olr{\xi}^s\#b-b\#\olr{\xi}^s)\#b=r\#\olr{\xi}^{-s}\#{\tilde b}\#b\#\olr{\xi}^s
 \]
 where $r\#\olr{\xi}^{-s}\#{\tilde b}\in S(1, \bg)$. Then writing $\phi^{-n}\#(r\#\olr{\xi}^{-s}\#{\tilde b})={\tilde r}\#\phi^{-n}$ with ${\tilde r}\in S(1, \bg)$ we conclude
 \[
\big|(\varPhi[\olr{D}^s, B]B u, \varPhi A\olr{D}^s)\big|\leq C\|\varPhi B\olr{D}^su\|\|\varPhi A\olr{D}^su\|.
\]
Repeating the same arguments to $B[\olr{D}^s, B]$ we end the proof. 
 \end{proof}
 For commutators coming from lower order term it is easy to see
 \begin{equation}
 \label{eq:kokan:teikai:s}
 \begin{split}
 \big|(\varPhi[\olr{D}^s, B_0]Au, \varPhi A\olr{D}^su)\big|\leq C\|\varPhi  A\olr{D}^su\|^2,\\
\big|(\varPhi [\olr{D}^s, B_0]u, \varPhi A\olr{D}^s u)\big|\leq C\|\varPhi \olr{D}^su\|\|\varPhi A\lr{D}^s u\|,\\
\big|(\varPhi [\olr{D}^s, B_1]u, \varPhi A\olr{D}^s u)\big|\leq C\|\varPhi \olr{D}^{s+1/2}u\|\|\varPhi A\olr{D}^s u\|.
 \end{split}
 \end{equation}
It follows from \eqref{eq:l:D:s}, \eqref{eq:kokan:teikai:s} and Lemma \ref{lem:b:D:s} that $|(\varPhi [\olr{D}^s, {\hat P}]u, \varPhi A\olr{D}^s u)|$ is controlled by the second term on the right-hand side of Proposition \ref{pro:bibun:ene:matome} with $\olr{D}^su$ in place of $u$, choosing $\theta$ suitably large. 

Recalling $Ae^{-\theta t}=e^{-\theta t}D_t$ one has from Proposition \ref{pro:bibun:ene:matome} that
\begin{equation}
\label{eq:ene:simple}
\begin{split}
Ce^{-2\theta t}\|\varPhi^{\flat}\olr{D}^s{\hat P}u\|^2\geq \dif_t e^{-2\theta t}\big(\|\varPhi L \olr{D}^su\|^2+\|\varPhi B \olr{D}^su\|^2\\
+\|\varPhi\olr{D}^sD_tu\|^2+\theta \|\varPhi\olr{D}^su\|^2\big).
\end{split}
\end{equation}
Here we note
\begin{lem}
\label{lem:Phi:ue:sita}Let $n\geq 1$. There is $C>0$ such that $
C\|\varPhi B v\|\geq \|\olr{D}v\|$.
\end{lem}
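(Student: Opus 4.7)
The plan is to reduce the claim to the already-proven inequality \eqref{eq:B:wo:D:bis}, which gives
\[
{\hat c}(1-CM^{-1/2})\|\varPhi^{\flat\flat}\lr{D} v\|\leq \|\varPhi B v\|,
\]
and then show that the weight $\omega\phi^{-n}$ defining $\varPhi^{\flat\flat}$ admits a strictly positive lower bound (depending on $n$), so that one can invert it on $L^2$ modulo a constant factor. Since $M$ and $\gamma$ have been fixed, $\lr{D}$ and $\olr{D}$ are equivalent in $L^2$-operator norm, so it suffices to replace $\lr{D}$ by $\olr{D}$ in the right-hand side at the cost of a harmless constant.

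First I would establish the pointwise inequality $\omega\phi^{-n}\geq c_n$ with $c_n>0$. Since $\omega\geq |t-\psi|$ and $\phi=\omega+(t-\psi)$, one has $\phi\leq 2\omega$, hence $\omega/\phi\geq 1/2$. Writing $\omega\phi^{-n}=(\omega/\phi)\phi^{-(n-1)}$ and using the upper bound $\phi\leq CM^{-1}$ from \eqref{eq:phi:uesita}, one gets
\[
\omega\phi^{-n}\geq \tfrac{1}{2}\phi^{-(n-1)}\geq \tfrac{1}{2}(CM^{-1})^{-(n-1)}=c_n>0
\]
for $n\geq 1$. Moreover, by Proposition \ref{pro:dif:omephi:matome} together with Proposition \ref{pro:omephi:to:g}, $\omega\phi^{-n}$ is an admissible weight for $g_{\ep}$ and belongs to $S(\omega\phi^{-n},g_{\ep})$.

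Next I would apply Lemma \ref{lem:m:1:2} with $m_1=\omega\phi^{-n}$ and $m_2=1$: since $1\leq c_n^{-1}\,\omega\phi^{-n}$, we obtain
\[
\|u\|=\|\op{1}u\|\leq C\,\|\op{\omega\phi^{-n}}u\|=C\,\|\varPhi^{\flat\flat}u\|.
\]
Plugging $u=\lr{D}v$ and combining with \eqref{eq:B:wo:D:bis} gives
\[
\|\lr{D}v\|\leq C\,\|\varPhi^{\flat\flat}\lr{D}v\|\leq C'\,\|\varPhi B v\|.
\]
Finally the equivalence $\olr{\xi}^s/C_s\leq \lr{\xi}^s\leq C_s\olr{\xi}^s$ (valid once $M,\gamma$ are fixed, as noted just before Proposition \ref{pro:bibun:ene:matome}) replaces $\lr{D}$ by $\olr{D}$, yielding the desired estimate $\|\olr{D}v\|\leq C\,\|\varPhi Bv\|$.

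The only point requiring care is that the constant $C$ depends on $n$ through $c_n$; this is admissible since in this section all estimates are allowed to depend on $n$ (the constraint is only that constants be independent of $M,\gamma,\theta$), and in any case $n$ has already been fixed prior to this lemma.
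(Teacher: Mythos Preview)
Your proof is correct and follows essentially the same route as the paper: the paper also derives the pointwise lower bound $\omega\phi^{-n}\geq 2^{-n}\omega^{-n+1}\geq c_n>0$ from $\phi\leq 2\omega$ and $\omega\leq CM^{-1}$, and then invokes \eqref{eq:B:wo:D:bis} together with Lemma~\ref{lem:m:1:2}. Your version is slightly more explicit about the passage from $\lr{D}$ to $\olr{D}$ and about the admissibility of the weight $\omega\phi^{-n}$, but the argument is the same.
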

\begin{proof}Since $\phi\leq 2\omega$ and $\omega\phi^{-n}\geq 2^{-n}\omega^{-n+1}\geq (2C)^{-n+1}/2$. Thus the proof follows from \eqref{eq:B:wo:D:bis} and Lemma \ref{lem:m:1:2}.
\end{proof}
Similarly from \eqref{eq:phi:uesita}, using Lemma \ref{lem:m:1:2}, we have
\begin{equation}
\label{eq:phi:norm:uesita}
\|v\|/C\leq \|\varPhi v\|,\;\|\varPhi^{\flat}v\|\leq C\|\olr{D}^nv\|, 
\;\;n\geq 1/2.
\end{equation}
\begin{definition}
\label{dfn:calH}\rm We denote $\|u\|_s=\|\olr{D}^su\|$ and by $H^s=H^s(\R^d)$ the $L^2$ based Sobolev space of order $s$. Denote by ${\mathcal H}_{-k,s}(\delta_1,\delta_2)$ the set of all $f$ such that
\[
(t-\delta_1)^{-k}\olr{D}^{s}f\in L^2((\delta_1,\delta_2)\times \R^d).
\]
\end{definition}
Assume  $D_t^ju\in {\mathcal H}_{-k, s+2-j}(\delta_1, \delta_2)$, $j=0, 1, 2$. From this one sees that $\lim_{t\to +\delta_1}\|D_t^ju(t)\|_{s+1-j}$, $j=0, 1$ exists which is $0$ for $k>0$. Using this we see $
\lim_{t\to +\delta_1}(t-\delta_1)^{-k}\|D_t^ju(t)\|_{s+1-j}=0$, $j=0, 1$. 
Let ${\hat\tau}$ be any point with $|{\hat\tau}|<\delta$. Multiply \eqref{eq:ene:simple}  by $(t-{\hat\tau})^{-2k}$ and integrate in $t$ from ${\hat \tau}$ to $t$ we obtain
\begin{prop}
\label{pro:ene:sekibun}For any $s\in\R$ there is $C$ such that
\begin{equation}
\label{eq:ene:sekibun}
\begin{split}
(t-{\hat\tau})^{-2k}\big(\|D_tu(t)\|_s^2+\|u(t)\|_{s+1}^2\big)\\
+\int_{\hat\tau}^t(\tau-{\hat\tau})^{-2k-1}\big(\|D_tu(\tau)\|_s^2+\|u(\tau)\|_{s+1}^2\big)d\tau\\
\leq C\int_{\hat\tau}^t(\tau-{\hat\tau})^{-2k}\|{\hat P}u(\tau)\|_{n+s}^2d\tau
\end{split}
\end{equation}
for any $u$ with $D_t^ju\in {\mathcal H}_{-k, n+s+2-j}({\hat\tau}, \delta)$, $j=0,1,2$.
\end{prop}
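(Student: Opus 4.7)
The plan is to integrate the differential estimate \eqref{eq:ene:simple} against the singular weight $(\tau-\hat\tau)^{-2k}$ over $\tau\in[\hat\tau,t]$. Setting
\[
X(\tau)=e^{-2\theta\tau}\bigl(\|\varPhi L\olr{D}^s u(\tau)\|^2+\|\varPhi B\olr{D}^s u(\tau)\|^2+\|\varPhi\olr{D}^s D_t u(\tau)\|^2+\theta\|\varPhi\olr{D}^s u(\tau)\|^2\bigr),
\]
\eqref{eq:ene:simple} reads $\dif_\tau X(\tau)\leq Ce^{-2\theta\tau}\|\varPhi^{\flat}\olr{D}^s{\hat P}u(\tau)\|^2$, and since $|\tau|<\delta$ the exponential factor is comparable to a positive constant. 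By the second bound in \eqref{eq:phi:norm:uesita}, $\|\varPhi^{\flat}\olr{D}^s{\hat P}u\|\leq C\|{\hat P}u\|_{n+s}$, while Lemma \ref{lem:Phi:ue:sita} gives $\|\varPhi B\olr{D}^s u\|\geq C^{-1}\|u\|_{s+1}$ and the first bound in \eqref{eq:phi:norm:uesita} gives $\|\varPhi\olr{D}^s D_t u\|\geq C^{-1}\|D_tu\|_s$, so that $X(\tau)\geq c\bigl(\|u(\tau)\|_{s+1}^2+\|D_tu(\tau)\|_s^2\bigr)$.

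Integration by parts in $\tau$ produces
\begin{align*}
(t-\hat\tau)^{-2k}X(t)+2k\int_{\hat\tau}^t(\tau-\hat\tau)^{-2k-1}X(\tau)\,d\tau\\
=\lim_{\tau\to\hat\tau^+}(\tau-\hat\tau)^{-2k}X(\tau)+\int_{\hat\tau}^t(\tau-\hat\tau)^{-2k}\dif_\tau X(\tau)\,d\tau.
\end{align*}
The last integral is bounded by $C\int_{\hat\tau}^t(\tau-\hat\tau)^{-2k}\|{\hat P}u(\tau)\|_{n+s}^2\,d\tau$ by the first paragraph; once the boundary limit at $\hat\tau$ is shown to vanish, the lower bound on $X$ transforms the left-hand side into the one appearing in \eqref{eq:ene:sekibun}, with the factor $2k$ absorbed into the constant (for $k>0$).

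It remains to verify that $\lim_{\tau\to\hat\tau^+}(\tau-\hat\tau)^{-2k}X(\tau)=0$. Using \eqref{eq:phi:norm:uesita} and the $L^2$-continuity of $\op{\ell}$ and $\op{b}$ on the appropriate Sobolev scales one obtains the uniform upper bound $X(\tau)\leq C\bigl(\|u(\tau)\|_{n+s+1}^2+\|D_tu(\tau)\|_{n+s}^2\bigr)$. Applying the argument stated in the paragraph immediately preceding the Proposition with $s$ replaced by $n+s$, the hypothesis $D_t^j u\in\mathcal{H}_{-k,n+s+2-j}(\hat\tau,\delta)$ for $j=0,1,2$ implies $(\tau-\hat\tau)^{-k}\|D_t^j u(\tau)\|_{n+s+1-j}\to 0$ as $\tau\to\hat\tau^+$ for $j=0,1$, and hence $(\tau-\hat\tau)^{-2k}X(\tau)\to 0$. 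This boundary analysis is the delicate step, and it is precisely what dictates the regularity level $n+s+2-j$ in the hypothesis rather than $s+2-j$: the weight $\varPhi=\op{\phi^{-n}}$ costs $n$ derivatives when one passes between $X$ and the Sobolev norms $\|D_tu\|_s$, $\|u\|_{s+1}$ that appear in the conclusion.
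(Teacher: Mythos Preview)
Your argument is correct and follows precisely the route indicated in the paper: multiply the differential inequality \eqref{eq:ene:simple} by the singular weight $(\tau-\hat\tau)^{-2k}$, integrate from $\hat\tau$ to $t$, kill the boundary contribution at $\hat\tau$ via the trace argument in the paragraph preceding the Proposition (applied at regularity level $n+s$), and convert $X(\tau)$ into Sobolev norms using Lemma \ref{lem:Phi:ue:sita} and \eqref{eq:phi:norm:uesita}. You have simply made explicit the details the paper leaves to the reader, including the reason the hypothesis is posed at order $n+s+2-j$ rather than $s+2-j$.
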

Consider the adjoint ${\hat P^*}$ of ${\hat P}$. Denoting ${\check\varPhi}=\op{\phi^n}$, ${\check \varPhi^{\flat}}=\op{\omega^{1/2}\phi^n}$ and ${\check \varPhi^{\sharp}}=\op{\omega^{-1/2}\phi^n}$ a repetition of the same argument gives  
\begin{equation}
\label{eq:ene:simple:ad}
\begin{split}
Ce^{2\theta t}\|{\check\varPhi^{\flat}}\olr{D}^s{\hat P^*}u\|^2\geq -\dif_t e^{2\theta t}\big(\|{\check\varPhi}L\olr{D}^su\|^2+\|{\check\varPhi}B \olr{D}^su\|^2\\
+\|{\check\varPhi}\olr{D}^sD_tu\|^2+\theta \|{\check\varPhi}\olr{D}^su\|^2\big).
\end{split}
\end{equation}
Since $2\phi\omega\geq \lr{\xi}^{-1}$ repeating  similar arguments one has
\begin{equation}
\label{eq:Phi:uesita:ad}
\|\olr{D}^{-n}v\|/C\leq \|{\check\varPhi }v\|\leq C\|v\|,\quad \|\olr{D}^{-n+1}v\|\leq C\|{\check\varPhi}B v\|,\;n\geq 1.
\end{equation}
Multiply \eqref{eq:ene:simple:ad} by $(t-{\hat\tau})^{2k+1}$ and integrate  in $I=({\hat\tau}, \delta)$ we have
\begin{prop}
\label{pro:ene:sekibun:ad}For any $s\in\R$ there is $C$ such that
\begin{gather*}
\int_I(\tau-{\hat\tau})^{2k}\big(\|D_tu(t)\|_{-n+s}^2+\|u(t)\|_{-n+s+1}^2\big)dt\\
\leq C\int_I(\tau-{\hat\tau})^{2k+1}\|{\hat P^*}u(t)\|_s^2dt,\quad u\in C_0^{\infty}(I\times\R^d).
\end{gather*}
\end{prop}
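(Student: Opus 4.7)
The plan is to follow the template set by the proof of Proposition \ref{pro:ene:sekibun}, with roles interchanged because \eqref{eq:ene:simple:ad} is a \emph{backward} differential inequality (the $t$-derivative appears with a minus sign on the right-hand side). Concretely, I would multiply \eqref{eq:ene:simple:ad} by $(t-\hat\tau)^{2k+1}$ and integrate in $t$ over $I=(\hat\tau,\delta)$. Writing $F(t)=\|{\check\varPhi}L\olr{D}^su\|^2+\|{\check\varPhi}B\olr{D}^su\|^2+\|{\check\varPhi}\olr{D}^sD_tu\|^2+\theta\|{\check\varPhi}\olr{D}^su\|^2$, the right-hand side becomes
\[
-\int_I(t-\hat\tau)^{2k+1}\dif_t\bigl(e^{2\theta t}F(t)\bigr)\,dt,
\]
and integration by parts yields $(2k+1)\int_I(t-\hat\tau)^{2k}e^{2\theta t}F(t)\,dt$ plus boundary terms. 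Because $u\in C_0^\infty(I\times\R^d)$, the boundary term at $t=\delta$ vanishes by compact support, and the factor $(t-\hat\tau)^{2k+1}$ kills the boundary contribution at $t=\hat\tau$. This is the only subtle point in the integration by parts; it is exactly what forced the choice of the power $2k+1$ rather than $2k$.

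Next I would convert $F(t)$ into the Sobolev norms appearing on the left-hand side of the desired inequality by applying \eqref{eq:Phi:uesita:ad}. The lower bounds $\|\olr{D}^{-n}v\|\leq C\|{\check\varPhi}v\|$ and $\|\olr{D}^{-n+1}v\|\leq C\|{\check\varPhi}Bv\|$, taken with $v=\olr{D}^su$ respectively $v=\olr{D}^sD_tu$, give
\[
\|{\check\varPhi}B\olr{D}^su\|\geq \|u\|_{-n+s+1}/C,\qquad \|{\check\varPhi}\olr{D}^sD_tu\|\geq \|D_tu\|_{-n+s}/C,
\]
so $F(t)\gtrsim \|u(t)\|_{-n+s+1}^2+\|D_tu(t)\|_{-n+s}^2$. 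On the left-hand side, since $\omega,\phi\leq CM^{-1}$ the symbol $\omega^{1/2}\phi^n$ lies in $S(1,\bg)$, hence ${\check\varPhi^{\flat}}$ is $L^2$-bounded (apply Lemma \ref{lem:m:1:2} or argue as in the proof of \eqref{eq:Phi:uesita:ad}); therefore $\|{\check\varPhi^{\flat}}\olr{D}^s\hat P^*u\|\leq C\|\hat P^*u\|_s$. Since $e^{2\theta t}$ is bounded above and below by positive constants on the bounded interval $I$, these factors can be absorbed into $C$.

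Putting everything together produces
\[
C\int_I(t-\hat\tau)^{2k+1}\|\hat P^*u\|_s^2\,dt\geq (2k+1)\int_I(t-\hat\tau)^{2k}F(t)\,dt\geq c\int_I(t-\hat\tau)^{2k}\bigl(\|D_tu\|_{-n+s}^2+\|u\|_{-n+s+1}^2\bigr)dt,
\]
which, after relabeling the constant, is the asserted inequality. I expect no serious obstacle beyond carefully verifying the vanishing of boundary terms in the integration by parts, which relies on the compact support of $u$ in $I\times\R^d$ and on the weight $(t-\hat\tau)^{2k+1}$ with $k\geq 0$; the rest is a straightforward adaptation of the forward estimate and a direct appeal to \eqref{eq:Phi:uesita:ad}.
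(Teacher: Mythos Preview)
Your proof is correct and follows exactly the approach the paper indicates (``Multiply \eqref{eq:ene:simple:ad} by $(t-\hat\tau)^{2k+1}$ and integrate in $I$''), filling in the details with the appropriate appeals to \eqref{eq:Phi:uesita:ad}. One small remark: since $u\in C_0^\infty(I\times\R^d)$ with $I$ open, $u$ vanishes near \emph{both} endpoints by compact support, so the boundary term at $t=\hat\tau$ already vanishes without invoking the weight; the exponent $2k+1$ is chosen not to kill a boundary term but so that after integration by parts the power $2k$ appears on $F(t)$, matching what is needed for the duality (Hahn--Banach) argument in the next subsection.
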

%
%%%%%%%%%%%%%%%%%%%%%%%%%%%%%%%%
\subsection{Local existence theorem}

From Proposition \ref{pro:ene:sekibun:ad} we have
\begin{align*}
\big|\int_{I}(f,v)dt\big|\leq \big(\int_{I}(t-{\hat\tau})^{-2k}\|f\|_{n+k+s+1}^2dt\big)^{1/2}\big(\int_{I}(t-{\hat\tau})^{2k}\|v\|_{-n-k-s-1}^2dt\big)^{1/2}\\
\leq C\big(\int_{I}(t-{\hat\tau})^{-2k}\|f\|_{n+k+s+1}^2dt\big)^{1/2}\big(\int_{I}(t-{\hat\tau})^{2k+1}\|{\hat P^*}v\|_{-2n-k-s-2}^2dt\big)^{1/2}
\end{align*}
for any $v\in C_0^{\infty}(I\times \R^d)$ and $f\in {\mathcal H}_{-k, n+k+s+1}(I)$. Using the Hahn-Banach theorem to extend the anti-linear form in ${\hat P^*}v$;
\begin{equation}
\label{eq:keisiki}
{\hat P^*}v\mapsto \int_{I}(f, v)dt
\end{equation}
we conclude that there is some $u\in {\mathcal H}_{-k -1/2, 2n+k+s+2 }(I)$  such that
\[
\int_{I}(f, v)dt=\int_{I}(u,{\hat P^*}v)dt,\quad v\in C_0^{\infty}(I\times \R^d).
\]
This implies that ${\hat P}u=f$. Since $u\in {\mathcal H}_{0, 2n+k+s+2}(I)$ and $f\in {\mathcal H}_{0, n+k+s+1}(I)$  it follows from \cite[Theorem B.2.9]{Hobook} that $D_t^ju\in {\mathcal H}_{0, n+k+s+3-j}(I)$ for $ j=0,1,2,\ldots$. Thus with $w_j=\olr{D}^{n+s+2-j}D_t^ju$ one has $D_t^iw_j\in L^2(I\times \R^d)$ for $i=0,\ldots, k+1$ hence $D_t^iw_j({\hat\tau})$ exists in $L^2(\R^d)$ which is $0$ for $i=0,\ldots, k$ since $w_j\in {\mathcal H}_{-k-1/2, 0}(I)$. Thus one can write $w_j(t)=\int_{\hat\tau}^t(t-\tau)^{k}\dif_t^{k+1}w_j(\tau)d\tau/k!$. Thus one concludes that $D_t^ju\in {\mathcal H}_{-k-1/2, n+s+2-j}(I)$  for $0\leq j\leq 2$  then \eqref{eq:ene:sekibun} holds for this $u$. Now let $f\in {\mathcal H}_{-k, n+s}(I)$. Take a rapidly decreasing function $\rho(\xi)$ with $\rho(0)=1$ then $f_{\ep}=\rho(\ep D)f\in {\mathcal H}_{-k, 2n+k+s+2}(I)$ and $f_{\ep}\to f$ in ${\mathcal H}_{-k, n+s}(I)$. As just proved above there is $u_{\ep}$ satisfying ${\hat P}u_{\ep}=f_{\ep}$ and \eqref{eq:ene:sekibun}. Therefore choosing a weakly convergent subsequence $\{u_{\ep'}\}$ one can conclude
\begin{them}
\label{thm:pre:sonzai:s} For any $s\in \R$ and  any $f\in {\mathcal H}_{-k, n+s}(I)$ there exists a unique $u$ with $D_t^ju\in {\mathcal H}_{-k-1/2, s+1-j}(I)$, $j=0,1,2$, satisfying ${\hat P}u=f$ and \eqref{eq:ene:sekibun}.
\end{them}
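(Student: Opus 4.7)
The plan is to construct $u$ by duality against the adjoint estimate. For any $v\in C_0^\infty(I\times\R^d)$ and $f\in\mathcal{H}_{-k,n+s}(I)$, Cauchy--Schwarz and Proposition \ref{pro:ene:sekibun:ad} give
\[
\Bigl|\int_I(f,v)\,dt\Bigr|\leq \Bigl(\int_I(t-{\hat\tau})^{-2k}\|f\|_{n+k+s+1}^2dt\Bigr)^{1/2}\cdot C\Bigl(\int_I(t-{\hat\tau})^{2k+1}\|{\hat P^*}v\|_{-2n-k-s-2}^2dt\Bigr)^{1/2},
\]
so the antilinear form ${\hat P^*}v\mapsto\int_I(f,v)\,dt$ is well-defined and continuous on the subspace $\{{\hat P^*}v:v\in C_0^\infty(I\times\R^d)\}$ of the weighted $L^2$ space with weight $(t-{\hat\tau})^{2k+1}$. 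Hahn--Banach extends this form to the whole space, and the Riesz representation theorem produces $u\in\mathcal{H}_{-k-1/2,\,2n+k+s+2}(I)$ with $\int_I(f,v)\,dt=\int_I(u,{\hat P^*}v)\,dt$; this is precisely ${\hat P}u=f$ in the distributional sense.

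Next I would upgrade the regularity. Since ${\hat P}$ is second order in $D_t$ with principal part $-D_t^2$, one can iteratively solve for $D_t^j u$ in terms of ${\hat P}u$ and lower $t$-order, higher spatial-order expressions; combined with \cite[Theorem B.2.9]{Hobook} applied to ${\hat P}u=f$ this yields $D_t^j u\in\mathcal{H}_{0,n+k+s+3-j}(I)$ for all $j$ whenever $f$ itself is smooth enough (say $f\in\mathcal{H}_{-k,2n+k+s+2}(I)$). Set $w_j=\olr{D}^{n+s+2-j}D_t^j u$; then $D_t^iw_j\in L^2(I\times\R^d)$ for $0\leq i\leq k+1$, so traces at $t={\hat\tau}$ exist in $L^2(\R^d)$, and the weight $(t-{\hat\tau})^{-k-1/2}$ forces $D_t^iw_j({\hat\tau})=0$ for $i\leq k$. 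Writing $w_j(t)=\int_{\hat\tau}^t(t-\tau)^k\dif_t^{k+1}w_j(\tau)d\tau/k!$ and applying Hardy's inequality shows $D_t^j u\in\mathcal{H}_{-k-1/2,n+s+2-j}(I)$ for $j=0,1,2$, and the energy estimate \eqref{eq:ene:sekibun} holds for this $u$.

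For general $f\in\mathcal{H}_{-k,n+s}(I)$, I would use a Friedrichs mollifier $f_\epsilon=\rho(\epsilon D)f$ with $\rho$ rapidly decreasing and $\rho(0)=1$. Then $f_\epsilon\in\mathcal{H}_{-k,2n+k+s+2}(I)$, the previous step produces $u_\epsilon$ satisfying ${\hat P}u_\epsilon=f_\epsilon$ and \eqref{eq:ene:sekibun}, and $f_\epsilon\to f$ in $\mathcal{H}_{-k,n+s}(I)$. The energy estimate gives uniform bounds on $\{u_\epsilon\}$, so by weak compactness extract a weakly convergent subsequence $u_{\epsilon'}\rightharpoonup u$; lower semicontinuity of the norms under weak limits passes \eqref{eq:ene:sekibun} to $u$. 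Uniqueness follows by applying \eqref{eq:ene:sekibun} to the difference of two solutions with the same $f$.

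The main obstacle is the regularity/trace step: one must ensure the weak solution produced by Hahn--Banach has enough $t$-regularity to make literal sense of the traces $D_t^ju({\hat\tau})$ and to verify they vanish in the weighted sense dictated by $\mathcal{H}_{-k-1/2,*}(I)$, since only then does \eqref{eq:ene:sekibun} apply directly. This is where the bootstrap from \cite[Theorem B.2.9]{Hobook} and the Taylor representation of $w_j$ are essential.
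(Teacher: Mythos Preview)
Your proposal is correct and follows essentially the same route as the paper: duality against the adjoint estimate (Proposition~\ref{pro:ene:sekibun:ad}) with the same Sobolev indices, Hahn--Banach to produce $u\in\mathcal{H}_{-k-1/2,\,2n+k+s+2}(I)$, the bootstrap via \cite[Theorem~B.2.9]{Hobook}, the Taylor representation of $w_j=\olr{D}^{n+s+2-j}D_t^ju$ to obtain the weighted vanishing at $t={\hat\tau}$, and finally approximation by $f_\ep=\rho(\ep D)f$ with weak compactness. The only cosmetic slip is that in your first display you assume $f\in\mathcal{H}_{-k,n+s}(I)$ but use $\|f\|_{n+k+s+1}$; as you yourself note later, this step should be carried out first for $f\in\mathcal{H}_{-k,2n+k+s+2}(I)$, which is exactly how the paper proceeds.
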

Thanks to Theorem \ref{thm:pre:sonzai:s} one can define the solution map
\[
{\hat G}({\hat\tau}): {\mathcal H}_{-k,n+s}(I)\ni f\mapsto u\in {\mathcal H}_{-k-1/2, s+1}(I),\quad I=({\hat\tau}, \delta).
\]
We shall keep ${\hat\tau}$ fixed in the following discussion and therefore we write  ${\hat G}$ dropping ${\hat \tau}$. This solution operator ${\hat G}$ verifies  
\begin{equation}
\label{eq:sol:op}
\sum_{j=0}^1\int_{\hat\tau}^t(\tau-{\hat\tau})^{-2k-1}\|D_t^j{\hat G}f(\tau)\|^2_{s+1-j}\leq C\int_{\hat\tau}^t(\tau-{\hat\tau})^{-2k}\|f(\tau)\|^2_{n+s}
\end{equation}
and has (microlocal) finite propagation speed. We state this property without proof (for a proof see \cite{Ni:book}).
\begin{prop}
\label{pro:yugen:denpa}Notations being as above and let $\Gamma_i$ $(i=1, 2, 3 )$ be open conic sets in $\R^d\times (\R^d\setminus\{0\})$ with relatively compact basis such that $\Gamma_1\Subset \Gamma_2\Subset \Gamma_3$ and  $h_i(x,\xi)\in S(1, g_0)=S^0$ with ${\rm supp}\,h_1\subset \Gamma_1$, ${\rm supp}\,h_2\subset \Gamma_3\setminus \Gamma_2$. Then there exists $\delta'=\delta'(\Gamma_i)>0$ such that for any $r$, $s$ one can find $C>0$ such that
\begin{gather*}
\sum_{j=0}^1\int_{\hat\tau}^t(\tau-{\hat\tau})^{-2k-1}\|\op{h_2}D_t^j{\hat G}\,\op{h_1}f(\tau)\|_{r-j}^2d\tau\\
\leq C\int_{\hat\tau}^t(\tau-{\hat\tau})^{-2k}\|f(\tau)\|_{s}^2d\tau,\quad {\hat\tau}< t\leq {\hat\tau}+\delta',\;\;f\in {\mathcal H}_{-k, s}({\hat\tau},  {\hat\tau}+\delta').
\end{gather*}
\end{prop}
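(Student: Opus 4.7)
The plan is to combine the energy estimate of Proposition \ref{pro:ene:sekibun} with a microlocal bootstrap that exploits the disjointness of $\text{supp}\,h_1$ and $\text{supp}\,h_2$. Since the principal symbol $p = -\tau^2 + \ell^2 + q$ is real-hyperbolic with bicharacteristic speed bounded on any compact conic region, singularities of $u = \hat G\op{h_1}f$ can reach $\Gamma_3 \setminus \Gamma_2$ only after a positive time, and $\delta'$ will ultimately be the conic distance from $\overline{\Gamma_1}$ to $\partial\Gamma_2$ divided by a bound on the Hamiltonian speed of $p$ on $\overline{\Gamma_3}$.

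First I interpolate by a finite chain $\Gamma_2 \Subset \Gamma^{(0)} \Subset \cdots \Subset \Gamma^{(N)} \Subset \Gamma_3$ and choose classical symbols $\chi_j \in S^0$ satisfying $\chi_j \equiv 1$ on a neighborhood of $\overline{\Gamma_3 \setminus \Gamma^{(j)}}$, $\text{supp}\,\chi_j \subset \Gamma_3 \setminus \overline{\Gamma^{(j-1)}}$, and $h_2 = h_2 \# \chi_N$ modulo $S^{-\infty}$. Setting $v_j = \op{\chi_j}u$, disjointness of supports gives $\op{\chi_j}\op{h_1}$ and $\op{\chi_j}(1-\op{\chi_{j-1}})$ in $\op{S^{-\infty}}$, and from $\hat P u = \op{h_1}f$ one obtains
\begin{equation*}
\hat P\,v_j = [\hat P, \op{\chi_j}]\,v_{j-1} + R_j f
\end{equation*}
with $R_j \in \op{S^{-\infty}}$ and $[\hat P, \op{\chi_j}]$ of order one with symbol microsupported in $\Gamma^{(j)} \setminus \overline{\Gamma^{(j-1)}}$.

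Then I apply Proposition \ref{pro:ene:sekibun} to $v_j$ successively. Each commutator costs one Sobolev derivative, but time integration between the weights $(\tau-\hat\tau)^{-2k}$ on the right of \eqref{eq:ene:sekibun} and $(\tau-\hat\tau)^{-2k-1}$ under the integral on the left produces a factor $(t-\hat\tau)$ at every step. Iterating $N$ times yields an inequality of the form
\begin{equation*}
\sum_{i=0}^1 \int_{\hat\tau}^t (\tau-\hat\tau)^{-2k-1}\,\|D_t^i v_N\|_{r-i}^2\,d\tau \leq C(t-\hat\tau)^{N}\int_{\hat\tau}^t (\tau-\hat\tau)^{-2k}\|u\|_{r+N}^2\,d\tau + \mathcal{R},
\end{equation*}
where the smoothing remainder $\mathcal{R}$ from the $R_j f$ terms is controlled at any Sobolev level by \eqref{eq:sol:op}. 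Choosing $N$ so that $r+N \geq s + n$ and invoking \eqref{eq:sol:op} to dominate $\|u\|_{r+N}$ by $\|f\|_s$, and then taking $\delta'$ small enough that $C\delta'^{N}$ is controlled, delivers the claim since $\op{h_2}u = \op{h_2}v_N$ modulo $\op{S^{-\infty}}$.

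The main obstacle is keeping $\delta'$ independent of $r$ and $s$: the commutator constants are uniformly bounded by $|\{p, \chi_j\}|$ on $\overline{\Gamma_3}$, which depends only on the cones $\Gamma_i$ and the fixed principal symbol, so the bootstrap gain per step is uniform while the factorial-type growth with $N$ is absorbed into $C$. A subtler secondary point is that each $v_j$ must vanish at $t = \hat\tau$ in the weighted sense for Proposition \ref{pro:ene:sekibun} to apply at each stage; this is inherited from the construction of $\hat G$ via Theorem \ref{thm:pre:sonzai:s}.
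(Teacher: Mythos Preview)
The paper does not prove this proposition; it states the result and refers to \cite{Ni:book} for a proof. Your overall strategy---a nested family of cutoffs, commutator bootstrap, and repeated application of Proposition~\ref{pro:ene:sekibun}---is indeed the standard route and is almost certainly the skeleton of the argument in \cite{Ni:book}. However, the sketch as written has a genuine gap.

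The problem is the sentence ``Choosing $N$ so that $r+N\geq s+n$ and invoking \eqref{eq:sol:op} to dominate $\|u\|_{r+N}$ by $\|f\|_s$''. The inequality \eqref{eq:sol:op} gives $\|u\|_{s'+1}\precsim\|f\|_{n+s'}$, so controlling $\|u\|_{r+N}$ requires $\|f\|_{r+N+n-1}$, which is \emph{not} bounded by $\|f\|_s$ once $r+N+n-1>s$. Each pass through the energy estimate plus a first-order commutator \emph{loses} $n$ derivatives rather than one; increasing $N$ makes the required regularity of $f$ worse, not better. Thus the chain $v_N\to v_{N-1}\to\cdots\to v_0\to u\to f$ cannot by itself produce the infinite microlocal smoothing that the proposition asserts (arbitrary $r$ against arbitrary $s$). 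Your condition $r+N\geq s+n$ has the inequality running the wrong way.

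A secondary issue: since you let $N$ depend on $r,s$, the cutoffs $\chi_j$ must be squeezed into the fixed shell between $\Gamma_2$ and $\Gamma_3$, so $|\nabla\chi_j|$---and hence $\{p,\chi_j\}$---grows like $N$. The claim that ``the commutator constants are uniformly bounded\ldots which depends only on the cones'' is therefore false in your setup; the per-step constant is of order $N^2$, which the factor $(t-\hat\tau)^N$ (even with a factorial from iterated time integrals) does not absorb for $\delta'$ independent of $N$.

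The argument in \cite{Ni:book} closes this gap by keeping the cutoff chain \emph{fixed} (hence commutator constants genuinely independent of $r,s$) and obtaining the arbitrary gain in regularity by iterating the whole scheme: once the estimate is established for one pair $(r_0,s_0)$ on $(\hat\tau,\hat\tau+\delta')$, it feeds back into the right-hand side at the next Sobolev level, and the resulting Volterra-type recursion converges on the same interval. Alternatively one may pair the forward estimate of Proposition~\ref{pro:ene:sekibun} with the adjoint estimate of Proposition~\ref{pro:ene:sekibun:ad} via duality. Either way, the point you are missing is a mechanism that \emph{gains} regularity without shrinking $\delta'$; the bare commutator staircase you wrote down does not provide one.
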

Recall $(Tu)(t, x)=u(t, \kappa(x))$. Let $R_{\bar\xi}=P-T{\hat P}T^{-1}$ then with $G_{\bar\xi}=T{\hat G}T^{-1}$ we have
\[
PG_{\bar\xi}=I+R_{\bar\xi}G_{\bar\xi}
\]
where it is clear that $G_{\bar\xi}$ verifies \eqref{eq:sol:op}. Since $R_{\bar\xi}=T(\sum_{j=1}^2a_j(t, x, D)D^{2-j}_t)T^{-1}$ with $a_j\in S^j\cap S^{-\infty}(W_{M, \gamma})$ applying the description of the wave front set of $Tu$ (e.g.\cite[Theorem 8.2.4]{Hobook:1}) one can find a conic neighborhood $W_{\bar\xi}$ of $(0, {\bar \xi})$ such that for any $h(x, \xi)\in S^0$ supported in $W_{\bar\xi}$ we have
\begin{equation}
\label{eq:WF:henkan}
\| R_{\bar\xi}\,\op{h}u\|_p\precsim (\|D_tu\|_{q-1}+\|u\|_q),\quad \forall p, q\in\R.
\end{equation}
 It is not difficult to prove that $G_{{\bar\xi}}$ has (microlocal) finite propagation speed.

\begin{them}
\label{thm:sonzai:ippan} Assume that every singular point of $p(0, 0, \tau, \xi)=0$  is effectively hyperbolic. Then there exist $\delta>0$, $n>0$ and a neighborhood $U$ of $x=0$ such that for every $f\in {\mathcal H}_{-k,s}({\hat\tau},\delta)$ with $|{\hat\tau}|<\delta$ there exists $u$ with $D_t^j u\in {\mathcal H}_{-k, -n+s+1-j}({\hat\tau},\delta)$, $j=0,1$, satisfying $
Pu=f$ in $({\hat\tau},\delta)\times U$.
\end{them}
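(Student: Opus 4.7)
The plan is to patch together the microlocal solution operators $G_{\bar\xi}$ constructed above into a single parametrix via a finite conic partition of unity, and then to correct the resulting error by a Neumann series using the smallness of $\delta$.

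Let $\Sigma \subset S^{d-1}$ be the compact set of unit directions $\bar\xi$ such that $(0,0,0,\bar\xi)$ is a singular point of $p=0$; by hypothesis each is effectively hyperbolic, so for every $\bar\xi\in\Sigma$ the preceding construction yields a conic neighborhood $W_{\bar\xi}$ of $(0,\bar\xi)$ and a solution operator $G_{\bar\xi}$ satisfying \eqref{eq:sol:op}, the identity $PG_{\bar\xi}=I+R_{\bar\xi}G_{\bar\xi}$ with microlocal smoothing \eqref{eq:WF:henkan}, and the microlocal finite propagation speed of Proposition \ref{pro:yugen:denpa}. At a direction $\bar\xi\notin\Sigma$ the point $(0,0,0,\bar\xi)$ is a regular point of $p=0$, so either $p$ is non-characteristic there (giving an elliptic parametrix) or $p$ has simple real $\tau$-roots there (giving a standard strictly hyperbolic solution operator); in either case one obtains an analogous $G_{\bar\xi}$ on a suitable $W_{\bar\xi}$. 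Compactness of $S^{d-1}$ gives a finite subcover $W_{\bar\xi_1},\dots,W_{\bar\xi_N}$; choose a conic partition of unity $\{h_j\}\subset S^0$ with $\mathrm{supp}\,h_j\subset W_{\bar\xi_j}$ and $\sum_j h_j=1$ on a conic neighborhood of $\{0\}\times S^{d-1}$, together with a spatial cutoff $\chi\in C_0^\infty(U)$ equal to one near $x=0$ in a small neighborhood $U$.

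Set $G=\sum_{j=1}^N G_{\bar\xi_j}\op{h_j}\chi$. A direct computation gives
\[
PG = \chi + R_0 + R,\qquad R=\sum_j R_{\bar\xi_j}G_{\bar\xi_j}\op{h_j}\chi,
\]
where $R_0$ is a pseudodifferential operator whose full symbol is supported away from a conic neighborhood of $\{x=0\}\times S^{d-1}$, and is therefore smoothing on data microsupported near the origin once finite propagation speed is invoked. Applying \eqref{eq:WF:henkan} to each factor $R_{\bar\xi_j}\op{h_j}$, composing with \eqref{eq:sol:op}, and integrating against the time weight, one finds for a sufficiently large fixed $n$ that
\[
\int_{\hat\tau}^{\hat\tau+\delta}(t-\hat\tau)^{-2k}\|Rw(t)\|_{n+s}^2\,dt \;\leq\; C\delta\int_{\hat\tau}^{\hat\tau+\delta}(t-\hat\tau)^{-2k}\|w(t)\|_{n+s}^2\,dt,
\]
the factor $\delta$ coming from the uniform gain of regularity by $R_{\bar\xi_j}$ through \eqref{eq:WF:henkan}. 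Shrinking $\delta$, $I+R_0+R$ becomes invertible on $\mathcal H_{-k,n+s}(\hat\tau,\hat\tau+\delta)$ by Neumann series. Given $f\in\mathcal H_{-k,s}(\hat\tau,\delta)$, set $w=(I+R_0+R)^{-1}f$ and $u=Gw$; then $Pu=f$ on $U\times(\hat\tau,\delta)$ since $\chi=1$ there, and the regularity assertion $D_t^j u\in\mathcal H_{-k,-n+s+1-j}$ follows from \eqref{eq:sol:op} applied to the individual $G_{\bar\xi_j}$ together with the boundedness of $(I+R_0+R)^{-1}$ and of $\op{h_j}\chi$.

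The main obstacle is the small-norm estimate for $R$: the microlocal smoothing \eqref{eq:WF:henkan} is only triggered when $R_{\bar\xi_j}$ acts on data whose wavefront set lies in $W_{\bar\xi_j}$, and $G_{\bar\xi_j}$ spreads that wavefront set by an amount controlled by Proposition \ref{pro:yugen:denpa} over the interval $(\hat\tau,\hat\tau+\delta')$. One must balance the Sobolev indices $(p,q)$ appearing in \eqref{eq:WF:henkan} against the loss $n$ in \eqref{eq:sol:op} so that the composition $R_{\bar\xi_j}G_{\bar\xi_j}\op{h_j}\chi$ maps $\mathcal H_{-k,n+s}$ into itself with norm $O(\delta^{1/2})$, uniformly in $j$. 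Once this indexing is in place, the same Hahn--Banach/Neumann-series closure used in the proof of Theorem \ref{thm:pre:sonzai:s} completes the argument, with $\delta$ also shrunk to ensure that finite propagation keeps the support inside $U$.
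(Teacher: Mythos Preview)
Your overall strategy---patch the microlocal solution operators via a finite conic partition of unity and correct by Neumann series---is exactly the paper's. But two points in the execution are not right, and the second is precisely the ``main obstacle'' you flag but do not resolve.

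First, the smallness factor $\delta$ does \emph{not} come from any gain of regularity by $R_{\bar\xi_j}$. The smoothing in \eqref{eq:WF:henkan} is used only to compensate the loss of $n$ derivatives incurred by $G_{\bar\xi_j}$, so that $R$ maps ${\mathcal H}_{-k,s}$ to itself (not ${\mathcal H}_{-k,n+s}$ to itself as you write). The actual source of smallness is the time-weight gain built into \eqref{eq:sol:op}: its left side carries $(\tau-\hat\tau)^{-2k-1}$ while the right side carries $(\tau-\hat\tau)^{-2k}$. Once one has
\[
\int_{\hat\tau}^t(\tau-\hat\tau)^{-2k-1}\|Rf(\tau)\|_s^2\,d\tau\leq C\int_{\hat\tau}^t(\tau-\hat\tau)^{-2k}\|f(\tau)\|_s^2\,d\tau,
\]
the bound $(\tau-\hat\tau)^{-2k}\leq \delta_1(\tau-\hat\tau)^{-2k-1}$ on $(\hat\tau,\hat\tau+\delta_1)$ produces the contraction.

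Second, you cannot invoke \eqref{eq:WF:henkan} on $R_{\bar\xi_j}G_{\bar\xi_j}\op{h_j}$ as written, because the cutoff $h_j$ sits on the wrong side of $G_{\bar\xi_j}$. The paper's device---which dissolves your obstacle---is to choose an intermediate cutoff $\chi_j\in S^0$ with ${\rm supp}\,h_j\Subset\{\chi_j=1\}$ and ${\rm supp}\,\chi_j\subset W_{\bar\xi_j}$, and split
\[
R_{\bar\xi_j}G_{\bar\xi_j}\op{h_j}=R_{\bar\xi_j}\op{\chi_j}\,G_{\bar\xi_j}\op{h_j}+R_{\bar\xi_j}\op{1-\chi_j}\,G_{\bar\xi_j}\op{h_j}.
\]
In the first summand $R_{\bar\xi_j}\op{\chi_j}$ is infinitely smoothing by \eqref{eq:WF:henkan}; in the second, $\op{1-\chi_j}G_{\bar\xi_j}\op{h_j}$ is infinitely smoothing by the microlocal finite propagation speed of Proposition \ref{pro:yugen:denpa}, since $1-\chi_j$ and $h_j$ have disjoint conic supports. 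Both pieces then feed into \eqref{eq:sol:op} at Sobolev level $s$, and the weight gain furnishes the needed smallness. With this splitting in place no separate balancing of indices $(p,q)$ is required, and the Neumann series $S=\sum_{k\geq 0}R^k$ converges on ${\mathcal H}_{-k,s}$; the solution is then $u=GSf$.
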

\begin{proof}Recall that we have proved that for any $|\eta|=1$ one can find a conic neighborhood  $W_{\eta}$ of $(0, \eta)$, a positive constant $\delta_{\eta}>0$ and 
a solution operator $G_{\eta}({\hat\tau})$ with (microlocal) finite propagation speed satisfying \eqref{eq:sol:op} such that
\[
PG_{\eta}=I+R_{\eta}G_{\eta},\quad |t|\leq \delta_{\eta}
\]
where $R_{\eta}$ satisfies \eqref{eq:WF:henkan} for $h\in S^0$ with ${\rm supp }\,h\subset W_{\eta}$. 
We can choose a finite number of $\eta_i$ such that $\cup_iW_{\eta_i}\supset U\times (\R^d\setminus \{0\})$, where $U$ is a neighborhood of 
$x=0$. Now take another open conic covering $\{V_i\}$ of $U\times (\R^d\setminus\{0\})$ with $V_i\Subset W_{\eta_i}$, and a partition of unity $\{\alpha_i(x,\xi)\in S^0\}$ subordinate to $\{V_i\}$ so that $
\sum_i\alpha_i(x,\xi)=\alpha(x)$ 
where $\alpha(x)$ is equal to $1$ in a neighborhood of $x=0$. Denoting
\[
G=\sum_iG_{\eta_i}\op{\alpha_i}
\]
we have $P G f=\sum_iPG_{\eta_i}\op{\alpha_i}f
=\alpha(x)f-Rf$
with $R=-\sum_i R_{\eta_i}G_{\eta_i}\op{\alpha_i}$. Now choosing $\chi_i\in S^0$ supported in $W_{\eta_i}$ such that $V_i\Subset\{\chi_i=1\}$ and writing $R_{\eta_i}G_{\eta_i}\op{\alpha_i}=R_{\eta_i}(\op{\chi_i}+\op{1-\chi_i})G_{\eta_i}\op{\alpha_i}$ it follows from (microlocal) finite propagation speed and \eqref{eq:WF:henkan} that there exists $\delta'>0$ such that
\[
\int_{\hat\tau}^t(\tau-{\hat\tau})^{-2k-1}\|Rf(\tau)\|_{s}^2d\tau\leq C\int_{\hat\tau}^t(\tau-{\hat\tau})^{-2k}\|f(\tau)\|_{s}^2d\tau
\]
for ${\hat\tau}\leq t\leq {\hat\tau}+\delta'$. Choosing $0<\delta_1\leq \delta'$ such that $\delta_1C\leq 1/2$ one has 
\[
\int_{\hat\tau}^t(\tau-{\hat\tau})^{-2k}\|Rf(\tau)\|_{s}^2d\tau\leq \frac{1}{2}\int_{\hat\tau}^t(\tau-{\hat\tau})^{-2k}\|f(\tau)\|_{s}^2d\tau
\]%
for $f\in {\mathcal H}_{-k, s}({\hat\tau}, {\hat\tau}+\delta_1)$. With $
S=\sum_{k=0}^{\infty}R^k$ 
we have $Sf\in {\mathcal H}_{-k, s}({\hat\tau}, {\hat\tau}+\delta_1)$ and
\[
\int_{\hat\tau}^t(\tau-{\hat\tau})^{-2k}\|Sf(\tau)\|_{s}
\leq 2\int_{\hat\tau}^t(\tau-{\hat\tau})^{-2k}\|f(\tau)\|_{s}.
\]
 Let $\gamma(x)\in C_0^{\infty}(\R^d)$ be equal to $1$ near $x=0$ such that ${\rm supp}\,\gamma\Subset \{\alpha=1\}$. Since $\gamma(\alpha-R)S=\gamma(I-R)S=\gamma$ we have $
\gamma(x)PGSf=\gamma(x)f$, that is $
P\big(GSf\big)=f$ on $\{\gamma(x)=1\}$.
With $u=G S f$ one has
\begin{gather*}
\sum_{j=0}^{1}\int_{\hat\tau}^t(\tau-{\hat\tau})^{-2k-1}\|D_t^ju(\tau)\|_{-n+s+1-j}^2d\tau\leq C\int_{\hat\tau}^t\tau^{-2k}\|Sf(\tau)\|_{s}^2d\tau
\end{gather*}
which proves the assertion.
\end{proof}
%
%%%%%%%%%%%%%%%%%%%%%%%%%%%

%%%%%%%%%%%%%%%%%%%%%%%%%%%%%%
\section{Proof of propositions and lemmas}
\label{sec:proof:lemma}

%%%%%%%%%%%%%%
%%

%%%%%%%%%%%%%%%%%%%%%%%%%%%%
\subsection{Proof of lemmas in Section \ref{sec:kakutyo}}

%%%%%%%%%%%%
%%
\noindent
Proof of Lemma \ref{lem:kakutyo:a}: By the Taylor formula one can write
\begin{align*}
{f}(z(x, \xi)+{\bar z})=\sum_{|\al|=r}\frac{1}{\al!}z(x, \xi)^{\al}\dif_z^{\al}f({\bar z})
+(r+1)\sum_{|\al|=r+1}\Big[\frac{1}{\al!}z(x, \xi)^{\al}\\
\times \int_0^1(1-\theta)^r\dif_z^{\al} f(\theta z(x, \xi)+{\bar z})
d\theta \Big]
\end{align*}
where $
z(x,\xi)^{\al}\in S(M^{-r},G)$ 
for $|\al|=r$. Since $|z(x, \xi)|\leq CM^{-1}$ the integral belongs to $S(1, G)$ hence  the second term on the right-hand side is in $S(M^{-r-1},G)$ thus the assertion.
\qed
\smallskip

\noindent
Proof of Lemma \ref{lem:eta:seimitu:a}: Let $j\neq d$. Note that
\begin{gather*}
\dif \eta_j/\dif \xi_j
=\chi^{(1)}(M\xi_j\lr{\xi}^{-1})\lr{\xi}^{-1}-M^{-2}\chi^{(1)}(M\xi_j\lr{\xi}^{-1})(M\xi_j\lr{\xi}^{-1})^2\lr{\xi
}^{-1}
\end{gather*}
where $\chi^{(1)}(M\xi_j\lr{\xi}^{-1})(M\xi_j\lr{\xi}^{-1})^2\in S(1, G)$. If $k\neq j$ then
\begin{gather*}
\dif \eta_j/\dif \xi_k=-M^{-1}\chi^{(1)}(M\xi_j\lr{\xi}^{-1})(M\xi_j\lr{\xi}^{-1})(\xi_k\lr{\xi}^{-1})\lr{\xi}^{-1}.
\end{gather*}
Since $\chi^{(1)}(M\xi_j\lr{\xi}^{-1})(M\xi_j\lr{\xi}^{-1})\in S(1, G)$ the assertion is clear.
\qed
\smallskip

%%%%%%%%%%%%
%%%%%%%%%%%%%%%
%%
\noindent
Proof of Lemma \ref{lem:kihon:1}: Writing $q(t, y,\eta+e_d)={\tilde q}(y,\eta)$ one sees 
\begin{align*}
{\tilde q}(y, \eta)=\sum_{|\al+\be|=2}\frac{1}{\al!\be!}y^{\al}\eta^{\beta}\dif_y^{\al}\dif_{\eta}^{\be}{\tilde q}(0, 0)
+3\sum_{|\al+\be|=3}\Big[\frac{1}{\al!\be!}y^{\al}\eta^{\be}\\
\times \int_0^1(1-\theta)^2\dif_y^{\al}\dif_{\eta}^{\be}{\tilde q}(\theta y, \theta \eta)
d\theta \Big]
\end{align*}
where $\sum_{|\al+\be|=2}y^{\al}\eta^{\be}$ contains no $\eta_d$ because of the Euler's identity. For the case $(a)$ from $\dif_{y_1}^2{\tilde q}(0, 0)=0$ the term $\sum_{|\al+\be|=2}y^{\al}\eta^{\be}$ contains no $y_1$ because ${\tilde q}$ is nonnegative. Therefore $\dif^2_{x_1}{\bar q}\in S(M^{-1}, G)$ and $\dif_{x_j}^2{\bar q}\in S(1, G)$ by Lemma \ref{lem:kakutyo:a}. Now the assertion follows from the Glaeser's inequality. For the case $(b)$ from $
\dif_{\xi_d}{\bar q}=\dif_{\eta_d}{\tilde q}(y,\eta)r
+\sum_{k\neq  d}\dif_{\eta_k}{\tilde q}(y,\eta)r_k$ 
where $r\in S(\lr{\xi}^{-1}, G)$ and $r_k\in S(M^{-1}\lr{\xi}^{-1}, G)$ we have $
\big|\dif_{\xi_d}q \big|\leq CM^{-1/2}\sqrt{q}$. Since   $\dif_{\eta_1}^2{\tilde q}(0, 0)=0$ 
it results $
\big|\dif_{\eta_1}{\tilde 	q}(y, \eta)\big|\leq CM^{-1/2}\sqrt{{\tilde q}(y, \eta)}$ 
which shows $\big|\dif_{\xi_1}q\big|\leq CM^{-1/2}\sqrt{q}$ because  
\begin{gather*}
\dif_{\xi_j}{{\bar q}}=\dif_{\eta_j}{\tilde q}(y,\eta)\big\{\chi^{(1)}(M\xi_j\lr{\xi}^{-1})\lr{\xi}^{-1}+r_{1j}\big\}\\
+\sum_{k\neq j, d}\dif_{\eta_k}{\tilde q}(y,\eta)r_{2 k}+\dif_{\eta_d}{\tilde q}(y,\eta)\dif\eta_d/\dif{\xi_j},\quad j\neq d
\end{gather*}
where $r_{i k}\in S(M^{-1}\lr{\xi}^{-1}, G)$ in view of Lemma \ref{lem:eta:seimitu:a}. 
\qed
\smallskip

%%%%%%%%%%%%%%
%%
\noindent
Proof of Lemma \ref{lem:kihon:2}: Noting that $
|\eta(\xi)+e_d|^2=\sum_{j=1}^{d-1}\eta_j^2+(\eta_d+1)^2=1+k$ with $k\in S(M^{-1}, G)$ we see easily $1/|\eta(\xi)+e_d|=1+{\tilde k}$ with ${\tilde k}\in S(M^{-1}, G)$ hence  $\eta_1(\xi)/|\eta(\xi)+e_d|-\eta_1(\xi)\in S(M^{-1}, G)$. Since ${\psi}(x, \xi)-\eta_1(\xi)/|\eta(\xi)+e_d|\in S(M^{-2}, G)$ by Lemma \ref{lem:kakutyo:a} this together with Lemma \ref{lem:eta:seimitu:a} proves the case $(a)$. The proof for the case $(b)$ is similar.
\qed

\smallskip

%%%%%%%%%%%%%%%
%%

\noindent
Proof of Lemma \ref{lem:kokan}:
Write $\ell(t, y,\eta+e_d)={\tilde \ell}(y,\eta)$ then 
\begin{align*}
{\tilde \ell}(y, \eta)=\sum_{|\al+\be|=1}\frac{1}{\al!\be!}y^{\al}\eta^{\beta}\dif_y^{\al}\dif_{\eta}^{\be}{\tilde \ell}(0, 0)
+2\sum_{|\al+\be|=2}\Big[\frac{1}{\al!\be!}y^{\al}\eta^{\be}\\
\times \int_0^1(1-\theta)\dif_y^{\al}\dif_{\eta}^{\be}{\tilde \ell}(\theta y, \theta \eta)
d\theta \Big].
\end{align*}
Since $\sum_{|\al+\be|=1}y^{\al}\eta^{\be}$ contains no $\eta_d$ hence $
\dif_{\eta_d}{\tilde \ell}(y, \eta)\in S(M^{-1}, G)$. Then $
\dif_{\xi_d}{\ell}
\in S(M^{-1}\lr{\xi}^{-1}, G)$. Since $\dif_x^{\al}\psi\in S(M^{-1}, G)$, $(a)$ and $\dif_{\xi}^{\al}\psi\in S(1, G)$, $(b)$ for $|\al|=1$ the  rest of the proof follows from Lemma \ref{lem:kihon:2}.
\qed

%%%%%%%%%%%%%%%%%%%%
\subsection{Proof of Proposition \ref{pro:lam:1} }

Write $z=(x,\xi)$ and $w=(y,\eta)$. Let $g$ be either $\bg$ or $g_{\ep}$ in \eqref{eq:metric:ab}.  
Note that if $\lr{\eta}\leq \lr{\xi}/2\sqrt{2}$ then $|\xi-\eta|\geq (\gamma+|\xi|)/2\geq \lr{\xi}/2$ hence $|\xi-\eta|^4\lr{\eta}^{-2}\geq \gamma \lr{\xi}/2$ and if $\lr{\eta}\geq 2\sqrt{2}\lr{\xi}$ then $|\xi-\eta|\geq (\gamma+|\eta|)/2\geq \lr{\eta}/2$  hence $|\xi-\eta|^4\lr{\eta}^{-2}\geq \gamma\lr{\eta}/16$. Therefore we have
\[
\lr{\xi}/\lr{\eta}+\lr{\eta}/\lr{\xi}\leq C\big(1+\gamma^{-1}\lr{\eta}^{-2}|\xi-\eta|^4\big),\quad \xi,\eta\in\R^d.
\]
Since $g_w(z-w)\geq M^{-2}\lr{\eta}^{-1}|\xi-\eta|^2\geq \gamma^{-1/2}\lr{\eta}^{-1}|\xi-\eta|^2$ for $\gamma\geq M^4$ one has
\begin{equation}
\label{eq:g:sei:1}
\lr{\xi}/\lr{\eta}+\lr{\eta}/\lr{\xi}\leq C\big(1+g_w(z-w)\big)^2
\end{equation}
hence 
\begin{equation}
\label{eq:g:sei:2}
g_z(X)/g_w(X)+g_w(X)/g_z(X)\leq C\big(1+g_w(z-w))^2,\quad 0\neq X\in\R^d\times\R^d
\end{equation}
in particular $g$ is  $\sigma$ temperate  uniformly in $\gamma\geq M^4$  (see \cite[Chapter 18]{Hobook}). Note that \eqref{eq:g:sei:2} implies 
\begin{equation}
\label{eq:g:sei:3}
g_{z+w}(z)\leq C(1+g_w(z))^3.
\end{equation}
In this paper we call a positive $\sigma$, $g$ temperate function (see \cite[Chapter 18]{Hobook}) an admissible weight for $g$.
It is clear from \eqref{eq:g:sei:1} that $\lr{\xi}^s$, $s\in\R$ is 
an admissible weight for $g$.

In this section $A\precsim B$ means that $A\leq CB$ with some $C$ independent of $\lam$, $M$ and $\gamma$ with constraint \eqref{eq:para:seigen}. 

\smallskip

%%%%%%%%%%%%%
%%
\noindent
Proof of Lemma \ref{lem:b:lam:1}:
 Since ${\bar q}\in S(M^{-2}, G)$ the Glaeser's inequality shows
\begin{equation}
\label{eq:dif:q:one}
|\dif_x^{\al}\dif_{\xi}^{\be}{\bar q}|\precsim \lr{\xi}^{-|\be|}\sqrt{{\bar q}}\,,\quad |\al+\be|=1. 
\end{equation}
Together with  ${\bar b}\geq \lam^{1/2}\lr{\xi}^{-1/2}$ and $\sqrt{{\bar q}}\leq {\bar b}$ this proves that
\begin{equation}
\label{eq:lam:dif:b1}|\dif_x^{\al}\dif_{\xi}^{\be}{\bar b}|\precsim \lambda^{-1/2} \lr{\xi}^{ (|\al|-|\be|)/2}\,{\bar b}\,,\quad |\al+\be|=1.
\end{equation}
Assume \eqref{eq:lam:dif:b1} holds for $1\leq |\al+\be|\leq n$. Since ${\bar b}^2={\bar q}+\lam \lr{\xi}^{-1}$ then for $|\al+\be|\geq n+1\geq 2$ we see
\begin{gather*}
{\bar b}\,\dif_x^{\al}\dif_{\xi}^{\be}{\bar b}=\sum_{|\al'+\be'|\geq 1} C_{\ldots}\dif_x^{\al'}\dif_{\xi}^{\be'}{\bar b}\cdot \dif_x^{\al''}\dif_{\xi}^{\be''}{\bar b}+\dif_x^{\al}\dif_{\xi}^{\be}{\bar q}+\lam\, \dif_x^{\al}\dif_{\xi}^{\be}\lr{\xi}^{-1}.
\end{gather*}
Here note that
\begin{equation}
\label{eq:dif:q:c1}
\begin{split}
|\dif_x^{\al}\dif_{\xi}^{\be}{\bar q}|\precsim M^{-2+|\al+\be|}\lr{\xi}^{-|\be|}\\
\precsim {\bar b}^2\lam^{-1}M^{-2+|\al+\be|}\lr{\xi}^{-(|\al+\be|-2)/2}\lr{\xi}^{(|\al|-|\be|)/2}\\
\precsim {\bar b}^2\lambda^{-1}(M^2\lr{\xi}^{-1})^{(|\al+\be|-2)/2}\lr{\xi}^{(|\al|-|\be|)/2}
\precsim  {\bar b}^2\lambda^{-1}\lr{\xi}^{(|\al|-|\be|)/2}
\end{split}
\end{equation}
since ${\bar b}^2\lam^{-1}\lr{\xi}\geq 1$ and $M^2\lr{\xi}^{-1}\leq 1$. On the other hand we have
\begin{gather*}
|\dif_{\xi}^{\be}\lam \lr{\xi}^{-1}|\precsim \lam \lr{\xi}^{-1-|\be|}\precsim {\bar b}^3\lam^{-1/2}\lr{\xi}^{1/2-|\be|}\\
\precsim {\bar b}^3\lam^{-1/2}\lr{\xi}^{-(|\al+\be|-1)/2}\lr{\xi}^{(|\al|-|\be|)/2}
\precsim {\bar b}^2\lam^{-1/2}\lr{\xi}^{(|\al|-|\be|)/2}
\end{gather*}
for ${\bar b}\leq CM^{-1}$ from which we conclude  \eqref{eq:lam:dif:b1} for any $ |\al+\be|\geq 1$ by induction. 
\qed
\smallskip

%%%%%%%%%%%%%%%%%
%%
\noindent
 Proof of Lemma \ref{lem:b:2}:
Note that
\[
\dif_x^{\al}\dif_{\xi}^{\be}{\bar b}=(\dif_x^{\al}\dif_{\xi}^{\be}{\bar q}+\lam\dif_x^{\al}\dif_{\xi}^{\be} \lr{\xi}^{-1})/2{\bar b}.
\]
Repeating a similar argument proving \eqref{eq:dif:q:c1} we obtain
\begin{gather*}
|\dif_x^{\al+\mu}\dif_{\xi}^{\be+\nu}{\bar q}|\precsim M^{-1}M^{|\mu+\nu|}\lr{\xi}^{-(|\mu+\nu|-1)/2}\lr{\xi}^{-1/2-|\be|}\lr{\xi}^{(|\mu|-|\nu|)/2}\\
\precsim (M^2\lr{\xi}^{-1})^{(|\mu+\nu|-1)/2}\lr{\xi}^{-1/2-|\be|}\lr{\xi}^{(|\mu|-|\nu|)/2}\\
\precsim \lam^{-1/2}\lr{\xi}^{-|\be|}\,{\bar b}\lr{\xi}^{(|\mu|-|\nu|)/2},\quad |\al+\be|=1
\end{gather*}
for $|\mu+\nu|\geq 1$.  This together with \eqref{eq:dif:q:one} shows $\dif_x^{\al}\dif_{\xi}^{\be}{\bar q}/{\bar b}\in S(\lr{\xi}^{-|\be|}, {g})$ for $|\al+\be|=1$. 
On the other hand it is easy to see
\begin{gather*}
|\dif_{\xi}^{\be+\nu}\lam \lr{\xi}^{-1}|\precsim \lam \lr{\xi}^{-1-|\be+\nu|}\precsim {\bar b}^2\lr{\xi}^{-|\be|-|\nu|}
\precsim 
{\bar b}M^{-1}\lr{\xi}^{-|\be|-|\nu|}
\end{gather*}
from which we conclude the assertion. 
\qed

\smallskip

%
%%%%%%%%%%%%%
%%
\noindent
Proof of Proposition \ref{pro:lam:1}:
Note that $|\dif_x^{\al}\dif_{\xi}^{\be}{\bar b}|\precsim \lr{\xi}^{-|\be|}$ for $|\al+\be|=1$ in view of \eqref{eq:dif:q:one}. Assume $|\eta|\leq c\,\lr{\xi}$ hence
 \begin{equation}
 \label{eq:doto}
 \lr{\xi+s\eta}/C\leq \lr{\xi}\leq C\lr{\xi+s\eta}
 \end{equation}
 where $C$ is independent of  $|s|\leq 1$. 
Thus one has
\begin{gather*}
|{\bar b}(z+w)-{\bar b}(z)|\leq C(|y|+\lr{\xi}^{-1}|\eta|)
\leq C\lr{\xi}^{-1/2}{\bg}_{ z}^{1/2}(w)
\leq C {\bar b}(z){\bg}_{z}^{1/2}(w)
\end{gather*}
hence
\begin{equation}
\label{eq:b:doji}
{\bar b}(z+w)\leq C{\bar b}(z)(1+{\bg}_{z}(w))^{1/2}
\end{equation}
When $|\eta|\geq c\lr{\xi}$ then ${\bg}_{ z}(w)\geq c^2\lr{\xi}$ hence
\[
{\bar b}(z+w)\leq C\leq C{\bar b}(z)\lam^{-1/2}\lr{\xi}^{1/2}\leq C'{\bar b}(z)(1+{\bg}_{z}(w))^{1/2}
\]
thus \eqref{eq:b:doji}. Taking \eqref{eq:g:sei:3} into account we see that ${\bar b}$ is an admissible weight                                                                                                                                                                                                             for ${\bg}$ hence so is $b$. Noting $\lr{\xi}^s\in S(\lr{\xi}^s, {\bg})$ the proof  is completed.
\qed

\smallskip

%%%%%%%%%%%%%%%%%%%
%%
\noindent
Proof of Lemma \ref{lem:dif:q}:
It is clear from \eqref{eq:dif:q:one} that $
|\dif_x^{\al}\dif_{\xi}^{\be}{\bar q}|\precsim \lr{\xi}^{-1/2}\lr{\xi}^{(|\al|-|\be|)/2}{\bar b}$ for $|\al+\be|=1$. 
For $|\al+\be|\geq 2$ one sees
\begin{gather*}
|\dif_x^{\al}\dif_{\xi}^{\be}{\bar q}|\precsim M^{-2+|\al+\be|}\lr{\xi}^{-|\be|}\\
\precsim \lr{\xi}^{-1}(M^{2}\lr{\xi}^{-1})^{(|\al+\be|-2)/2}\lr{\xi}^{(|\al|-|\be|)/2}
\precsim \lr{\xi}^{-1/2}\,{\bar b}\lr{\xi}^{(|\al|-|\be|)/2}
\end{gather*}
which proves the first assertion. In view of Lemma \ref{lem:eta:seimitu:a} it follows  from the proof of Lemmas \ref{lem:kakutyo:a} and \ref{lem:kihon:1} that   $\dif_{x_1}{\bar q}\in S(M^{-2}, G)$, $(a)$ and $\dif_{\xi_j}{\bar q}\in S(M^{-2}\lr{\xi}^{-1}, G)$, $j=1, d$, $(b)$. Repeating the same arguments proving the first assertion we have
\begin{gather*}
|\dif_x^{\al}\dif_{\xi}^{\be}\dif_{x_1}{\bar q}|\precsim M^{-2+|\al+\be|}\lr{\xi}^{-|\be|}\precsim M^{-1}{\bar b}\lr{\xi}^{(|\al|-|\be|)/2},\;\;(a)\\
|\dif_x^{\al}\dif_{\xi}^{\be}\dif_{\xi_j}{\bar q}|\precsim M^{-2+|\al+\be|}\lr{\xi}^{-1-|\be|}\precsim M^{-1}{\bar b}\lr{\xi}^{-1}\lr{\xi}^{(|\al|-|\be|)/2}, \; j=1, d, \;\;(b)
\end{gather*}
for $|\al+\be|\geq 1$ with together with Lemma \ref{lem:kihon:1} completes the proof.
\qed

\smallskip

 %
%%%%%%%%%%%%%%
%%
\noindent
Proof of Corollary \ref{cor:b:tbibun}: The first assertion is clear from Lemma \ref{lem:b:2}. A repetition of the same arguments proving Lemma \ref{lem:dif:q} shows $\dif_t{\bar q}\in S({\bar b}, \bg)$. Noting
\[
\dif_x^{\al}\dif_{\xi}^{\be}\dif_t{\bar b}=\dif_x^{\al}\dif_{\xi}^{\be}\dif_t{\bar q}/(2{\bar b})-\dif_x^{\al}\dif_{\xi}^{\be}{\bar q}\,\dif_t{\bar q}/(4{\bar b}^{3}),\quad |\al+\be|=1
\]
and ${\bar b}\geq \lr{\xi}^{-1/2}$ the assertion follows from Lemma \ref{lem:dif:q} taking  $\dif_{x_1}\dif_t{\bar q}\in S(M^{-1}, G)$, $(a)$ and $\dif_{\xi_j}\dif_t{\bar q}\in S(M^{-1}\lr{\xi}^{-1}, G)$ for $j=1, d$, $(b)$ into account.
\qed

%

%%%%%%%%%%%%%%%%%%%%%%%%%%%%%%%%%%
\subsection{Proof of Proposition \ref{pro:dif:omephi:matome}}
\label{sec:ome:phi}

\begin{lem}
\label{lem:hyoka:psi:a} We have $
\dif_x^{\alpha}\dif_{\xi}^{\beta}\psi\in S( \lr{\xi}^{-1/2}M^{-\ep(\al,\be)}\lr{\xi}^{(|\al|-|\be|)/2}, g_{\ep})$  
for $ |\alpha+\beta|\geq 1$. Hence $
\dif_x^{\alpha}\dif_{\xi}^{\beta}\psi\in S( \omega M^{-\ep(\al,\be)}\lr{\xi}^{(|\al|-|\be|)/2}, g_{\ep})$ for $|\al+\be|=1$.
\end{lem}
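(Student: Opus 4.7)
The plan is to invoke Lemma \ref{lem:kihon:2} to split $\psi=\psi_0+r$, where $\psi_0$ is the explicit principal part ($\psi_0=M^{-1}\chi(M\xi_1\lr{\xi}^{-1})$ in case $(a)$, resp.\ $\psi_0=\varep M^{-1}\chi(Mx_1)+cM^{-1}\chi(Mx_d)$ in case $(b)$) and the remainder $r$ belongs to $S(M^{-2},G)\subset S(M^{-2},g_\ep)$, and then to verify the target estimate for each piece separately. Before starting I would record the multiplicative self-consistency
$$m_{\al+\mu,\be+\nu}=m_{\al,\be}\cdot M^{-\ep(\mu,\nu)}\lr{\xi}^{(|\mu|-|\nu|)/2},\qquad m_{\al,\be}:=\lr{\xi}^{-1/2}M^{-\ep(\al,\be)}\lr{\xi}^{(|\al|-|\be|)/2},$$
which reduces the claimed inclusion $\dif_x^\al \dif_\xi^\be \psi\in S(m_{\al,\be},g_\ep)$ for each fixed $(\al,\be)$ with $|\al+\be|\geq 1$ to the single pointwise bound $|\dif_x^\al \dif_\xi^\be \psi|\leq Cm_{\al,\be}$.

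For the remainder, I would plug the $G$-bound $|\dif_x^\al \dif_\xi^\be r|\precsim M^{-2+|\al+\be|}\lr{\xi}^{-|\be|}$ into the target, which reduces to verifying $M^{-2+|\al+\be|+\ep(\al,\be)}\lr{\xi}^{(1-|\al+\be|)/2}\precsim 1$. The running hypothesis \eqref{eq:seigen}, in the equivalent form $\lr{\xi}^{-1/2}\leq M^{-2}$, lets one absorb the positive $M$-powers into the $\lr{\xi}$-powers: the case $|\al+\be|=1$ is trivial since the $\lr{\xi}$ factor equals $1$, while for $|\al+\be|\geq 2$ the inequality $\lr{\xi}^{-(|\al+\be|-1)/2}\leq M^{-2(|\al+\be|-1)}$ reduces the ratio to a nonpositive power of $M$.

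For the main term $\psi_0$ I would exploit that in case $(a)$ it depends only on $\xi$, so only $\dif_\xi^\be$-derivatives with $|\be|\geq 1$ are nontrivial; the chain rule combined with $|\dif_\xi^\mu(M\xi_1\lr{\xi}^{-1})|\precsim M\lr{\xi}^{-|\mu|}$ for $|\mu|\geq 1$ yields $|\dif_\xi^\be \psi_0|\precsim M^{|\be|-1}\lr{\xi}^{-|\be|}$, which matches $m_{0,\be}$ upon applying $\lr{\xi}^{-1/2}\leq M^{-2}$ once more. Case $(b)$ is analogous with $x$ and $\xi$ interchanged, and is in fact more direct since $\chi^{(k)}(Mx_j)$ is bounded, giving $|\dif_x^\al \psi_0|\precsim M^{|\al|-1}$.

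The ``hence'' clause for $|\al+\be|=1$ follows at once from $\omega\geq \lr{\xi}^{-1/2}$ together with the admissibility of $\omega$ for $g_\ep$ established in Proposition \ref{pro:omephi:to:g}. Beyond the cited lemma the only tool I expect to need is the systematic use of the constraint $\gamma\geq M^4$; the only mildly conceptual point is the initial self-consistency observation that turns a family of derivative bounds into a single pointwise estimate and thereby into the desired symbol-class inclusion. I do not anticipate a serious obstacle.
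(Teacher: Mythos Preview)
Your proposal is correct and follows essentially the same route as the paper: both invoke Lemma~\ref{lem:kihon:2} to split $\psi$ into an explicit principal part (pure in $\xi$ for case $(a)$, pure in $x$ for case $(b)$) plus a remainder in $S(M^{-2},G)$, then verify the pointwise bound using $M^4\lr{\xi}^{-1}\leq 1$. Your explicit self-consistency remark, reducing the symbol-class membership for each $(\al,\be)$ to the single pointwise estimate $|\dif_x^{\al}\dif_{\xi}^{\be}\psi|\precsim m_{\al,\be}$, is exactly what the paper uses implicitly. The paper packages the two pieces $\psi_0$ and $r$ into a single bound $|\dif_{\xi}^{\be}\psi|\precsim M^{-1-\delta_{\ep b}+|\be|}\lr{\xi}^{-|\be|}$ (and analogously for $x$-derivatives) via the $\delta_{\ep a},\delta_{\ep b}$ bookkeeping, but the content is the same as your separated treatment.

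One small caution: for the ``hence'' clause you invoke Proposition~\ref{pro:omephi:to:g} (admissibility of $\omega$ for $g_{\ep}$), but that proposition is proved later via Lemma~\ref{lem:g:conti}, whose proof in turn cites the present Lemma~\ref{lem:hyoka:psi:a}. Fortunately you do not actually need admissibility here---the pointwise inequality $\lr{\xi}^{-1/2}\leq \omega$ alone gives the inclusion of symbol classes, which is exactly how the paper justifies this step. Drop the reference to Proposition~\ref{pro:omephi:to:g} and the argument is clean.
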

\begin{proof} Recall that $\psi=\eta_1(\xi)+r$, $(a)$ or $\psi=\varep y_1(x)+cy_d(x)+r$, $(b)$ with $r\in S(M^{-2}, G)$ in view of Lemma \ref{lem:kihon:2}. Let $|\be|\geq 1$ then 
\begin{gather*}
|\dif_{\xi}^{\be}\psi|\precsim M^{-1-\delta_{\ep b}+|\be|}\lr{\xi}^{-|\be|}
\precsim \lr{\xi}^{-1/2}(M^{2\delta_{\ep b}}\lr{\xi})^{-|\be|/2}(M^{2+2\delta_{\ep b}}\lr{\xi}^{-1})^{(|\be|-1)/2}.
\end{gather*}
Let $|\al|\geq 1$ then $|\dif_x^{\al}\psi|\precsim M^{-1-\delta_{\ep a}+|\al|}$ which is bounded by 
\[
\lr{\xi}^{-1/2}(M^{-2\delta_{\ep a}}\lr{\xi})^{|\al|/2}(M^{2+2\delta_{\ep a}}\lr{\xi}^{-1})^{(|\al|-1)/2}.
 \]
 Let $|\al|\geq 1$ and $|\be|\geq 1$, recalling $\ep(\al,\be)=\delta_{\ep a}|\al|+\delta_{\ep b}|\be|\leq |\al+\be|$,  we have
\begin{gather*}
|\dif_x^{\al}\dif_{\xi}^{\be}\psi|\precsim M^{-2+|\al+\be|}\lr{\xi}^{-|\be|}\\
\precsim \lr{\xi}^{-1/2}(M^{-2\delta_{\ep a}}\lr{\xi})^{|\al|/2}(M^{2\delta_{\ep b}}\lr{\xi})^{-|\be|/2}M^{2|\al+\be|-2}\lr{\xi}^{-(|\al+\be|-1)/2}\\
\precsim \lr{\xi}^{-1/2}(M^{-2\delta_{\ep a}}\lr{\xi})^{|\al|/2}(M^{2\delta_{\ep b}}\lr{\xi})^{-|\be|/2}(M^4\lr{\xi}^{-1})^{(|\al+\be|-1)/2}.
\end{gather*}
Since $M^{4}\lr{\xi}^{-1}\leq 1$ by \eqref{eq:seigen} the assertion follows. The second assertion is clear because $\lr{\xi}^{-1/2}
\leq \omega$. 
\end{proof}
\begin{lem}
\label{lem:hyoka:ome:a} We have $
\dif_x^{\alpha}\dif_{\xi}^{\beta}\omega^s\in S( \omega^{s-1} \lr{\xi}^{-1/2}M^{-\ep(\al,\be)}\lr{\xi}^{(|\al|-|\be|)/2}, g_{\ep})$ for $|\al+\be|\geq 1$.
In particular $\omega^s\in S(\omega^s, g_{\ep})$. 
\end{lem}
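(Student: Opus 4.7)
The plan is induction on $k=|\alpha+\beta|\geq 1$, using the base relation $\omega^2=(t-\psi)^2+\lr{\xi}^{-1}$.

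For the base case $k=1$, direct computation gives
\begin{equation*}
\dif_{x_j}\omega^s=-s\,\omega^{s-2}(t-\psi)\dif_{x_j}\psi,\quad \dif_{\xi_j}\omega^s=-s\,\omega^{s-2}(t-\psi)\dif_{\xi_j}\psi+\tfrac{s}{2}\,\omega^{s-2}\dif_{\xi_j}\lr{\xi}^{-1}.
\end{equation*}
The $\dif\psi$ contributions are controlled by $|t-\psi|\leq\omega$ combined with the first-order estimate of Lemma \ref{lem:hyoka:psi:a}, yielding exactly $\omega^{s-1}\lr{\xi}^{-1/2}M^{-\ep(\al,\be)}\lr{\xi}^{(|\al|-|\be|)/2}$. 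The remainder $\omega^{s-2}\dif_{\xi_j}\lr{\xi}^{-1}$ satisfies $|\omega^{s-2}\dif_{\xi_j}\lr{\xi}^{-1}|\precsim\omega^{s-1}\lr{\xi}^{-3/2}$ using $\omega^{-1}\leq\lr{\xi}^{1/2}$, and this fits inside the target $\omega^{s-1}\lr{\xi}^{-1/2}M^{-\delta_{\ep b}}\lr{\xi}^{-1/2}$ by absorbing one $\lr{\xi}^{-1/2}\leq M^{-\delta_{\ep b}}$ via the constraint $\lr{\xi}\geq\gamma\geq M^4$ of \eqref{eq:seigen}.

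For the inductive step, assume the lemma for all multi-indices of length at most $k$ and every exponent $s$. Given $|\al+\be|=k+1$, factor off one derivative $\dif^{\gamma}$ and expand the base-case identities by Leibniz's rule. Each resulting monomial has the form $\dif^{\mu_1}(\omega^{s-2})\cdot\dif^{\mu_2}(t-\psi)\cdot\dif^{\mu_3}\psi$ or $\dif^{\mu_1}(\omega^{s-2})\cdot \dif^{\mu_2}\lr{\xi}^{-1}$, with $\mu_1+\mu_2+\mu_3$ equal to the ambient multi-index. Apply the inductive hypothesis (for exponent $s-2$) to $\dif^{\mu_1}(\omega^{s-2})$, Lemma \ref{lem:hyoka:psi:a} to $\dif^{\mu_3}\psi$ and to $\dif^{\mu_2}(t-\psi)=-\dif^{\mu_2}\psi$ when $|\mu_2|\geq 1$, and use $|t-\psi|\leq\omega$ when $|\mu_2|=0$ to absorb the naked factor into an $\omega$-power. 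Additivity of $\ep(\cdot,\cdot)$ and of the signed $\lr{\xi}$-exponents across the product produces the claimed weight.

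The main obstacle is the careful bookkeeping of the remainder monomials $\omega^{s-2j}\dif^{\mu}\lr{\xi}^{-1}$, which are naively of order $\lr{\xi}^{-1-|\mu_2|}$ in $\xi$ rather than $\lr{\xi}^{-1/2+(|\al|-|\be|)/2}$: they must be repeatedly rebalanced using $\omega^{-1}\lr{\xi}^{-1/2}\leq 1$ and $\lr{\xi}^{-1/2}\leq M^{-\delta_{\ep b}}$, exactly as in the base case, relying on \eqref{eq:seigen}. The \emph{in particular} assertion $\omega^s\in S(\omega^s,g_{\ep})$ then follows by observing that $\omega^{-1}\lr{\xi}^{-1/2}\leq 1$, so the bound for $|\al+\be|\geq 1$ specializes to $\precsim \omega^s\cdot M^{-\ep(\al,\be)}\lr{\xi}^{(|\al|-|\be|)/2}$, which is precisely the symbol weight defining the class $S(\omega^s,g_{\ep})$.
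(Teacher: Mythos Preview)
Your argument is correct. The induction on $|\alpha+\beta|$ for all exponents $s$ simultaneously works, and your bookkeeping of the Leibniz terms and the $\lr{\xi}^{-1}$ remainders is sound (the key rebalancing inequalities $\omega^{-1}\lr{\xi}^{-1/2}\leq 1$ and $\lr{\xi}^{-1/2}\leq M^{-2}\leq M^{-\delta_{\ep b}}$ from \eqref{eq:seigen} are exactly what is needed). One remark on phrasing: since the symbol class statement at a given $(\alpha,\beta)$ already encodes bounds on all higher derivatives, your induction should be read as establishing the \emph{pointwise} bounds $|\dif_x^{\al}\dif_{\xi}^{\be}\omega^s|\precsim \omega^{s-1}\lr{\xi}^{-1/2}M^{-\ep(\al,\be)}\lr{\xi}^{(|\al|-|\be|)/2}$ level by level; the symbol class memberships then follow once all levels are done.

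The paper takes a different route. It first proves the sharp estimate only for $s=2$, where $\omega^2=(t-\psi)^2+\lr{\xi}^{-1}$ is an explicit polynomial in $(t-\psi)$ and $\lr{\xi}^{-1}$, so one can bound $\dif_x^{\al}\dif_{\xi}^{\be}\omega^2$ directly for all $(\al,\be)$ from Lemma~\ref{lem:hyoka:psi:a} without any induction. From the $s=2$ case and $\lr{\xi}^{-1/2}\leq\omega$ it deduces $\omega^2\in S(\omega^2,g_{\ep})$, hence $\omega^s\in S(\omega^s,g_{\ep})$ for every $s$ by the standard fact that powers of a positive symbol in its own class stay in the corresponding class; the general-$s$ sharp estimate then follows (implicitly) by the chain rule combining the $s=2$ bound with $\omega^{s-2}\in S(\omega^{s-2},g_{\ep})$. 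The paper's route is shorter because it avoids the Leibniz case-splitting, at the cost of leaving the final general-$s$ step to the reader; your route is more explicit and self-contained, but with more bookkeeping.
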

\begin{proof}First show the assertion for $s=2$.  Since $\omega^2=(t-\psi)^2+\lr{\xi}^{-1}$ noting $\omega\lr{\xi}^{1/2}\geq 1$ 
one sees   for $|\be|\geq 1$
\begin{gather*}
\big|\dif_{\xi}^{\be}\omega^2\big|\precsim \omega M^{-1-\delta_{\ep b}+|\be|}\lr{\xi}^{-|\be|}+M^{-2-2\delta_{\ep b}+|\be|}\lr{\xi}^{-|\be|}+\lr{\xi}^{-1-|\be|}\\
\precsim \omega\lr{\xi}^{-1/2}( M^{2\delta_{\ep b}} \lr{\xi})^{-|\be|/2}(M^{2+2\delta_{\ep b}}\lr{\xi}^{-1})^{(|\be|-1)/2}\\
+ \omega\lr{\xi}^{-1/2}(M^{2\delta_{\ep b}}  \lr{\xi})^{-|\be|/2}(M^{2+2\delta_{\ep b}}\lr{\xi}^{-1})^{(|\be|-2)/2}
+\omega\lr{\xi}^{-1/2} \lr{\xi}^{-|\be|}
\end{gather*}
%%%%%%
%%%%%%
where  the second term $M^{-2-2\delta_{\ep b}+|\be|}\lr{\xi}^{-|\be|}$ on the right-hand side  is absent when $|\be|=1$.
Let $|\al|\geq 1$ then we see
\begin{gather*}
\big|\dif_x^{\al}\omega^2\big|\precsim \omega M^{-1-\delta_{\ep a}+|\al|}+M^{-2-2\delta_{\ep a}+|\al|}
\\
\precsim \omega \lr{\xi}^{-1/2}(M^{-2\delta_{\ep a}}\lr{\xi})^{|\al|/2}(M^{2+2\delta_{\ep a}}\lr{\xi}^{-1})^{(|\al|-1)/2}\\
+ \omega \lr{\xi}^{-1/2}(M^{-2\delta_{\ep a}}\lr{\xi})^{|\al|/2}(M^{2+2\delta_{\ep a}}\lr{\xi}^{-1})^{(|\al|-2)/2}
\end{gather*}
where if $|\al|=1$ then the term $M^{-2-2\delta_{\ep a}+|\al|}$ on the right-hand side  is absent. Let $|\al|\geq 1$ and $|\be|\geq 1$. Then one sees that
\begin{gather*}
\big|\dif_x^{\al}\dif_{\xi}^{\be}\omega^2\big|\precsim |\omega \dif_x^{\al}\dif_{\xi}^{\be}r|+M^{-4+|\al+\be|}\lr{\xi}^{-|\be|}\\
\precsim \omega M^{-2+|\al+\be|}\lr{\xi}^{-|\be|}+\omega M^{-4+|\al+\be|}\lr{\xi}^{1/2-|\be|}\\
\precsim \omega\lr{\xi}^{-1/2}(M^{-2\delta_{\ep a}}\lr{\xi})^{|\al|/2}(M^{2\delta_{\ep b}}\lr{\xi})^{-|\be|/2}(M^{4}\lr{\xi}^{-1})^{(|\al+\be|-1)/2}\\
+ \omega\lr{\xi}^{-1/2}(M^{-2\delta_{\ep a}}\lr{\xi})^{|\al|/2}(M^{2\delta_{\ep b}}\lr{\xi})^{-|\be|/2}(M^{4}\lr{\xi}^{-1})^{(|\al+\be|-2)/2}.
\end{gather*}
Since $M^{4}\lr{\xi}^{-1}\leq 1$ we have the first assertion for $s=2$.  Since $\lr{\xi}^{-1/2}\leq \omega$ it is clear that $\omega^2\in S(\omega^2, g_{\ep})$ from which it is easy to see $\omega^s\in S(\omega^s, g_{\ep})$ for any $s\in \R$. 
\end{proof}
\begin{lem}
\label{lem:dif:Phi}We have $\phi
\in S(\phi, g_{\ep})$.
\end{lem}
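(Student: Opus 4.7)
The plan is to treat the zeroth order trivially and reduce everything higher order to a single first-order identity, then induct.

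First, I would derive the identity
\[
\partial\phi = -\frac{\phi}{\omega}\,\partial\psi + \frac{\partial\lr{\xi}^{-1}}{2\omega}
\]
by combining $\phi=\omega+(t-\psi)$ with the relation $2\omega\,\partial\omega = -2(t-\psi)\partial\psi + \partial\lr{\xi}^{-1}$ obtained from $\omega^2=(t-\psi)^2+\lr{\xi}^{-1}$. For $|\alpha+\beta|=1$, Lemma \ref{lem:hyoka:psi:a} bounds $|\partial\psi|$ by $C\omega M^{-\epsilon(\alpha,\beta)}\lr{\xi}^{(|\alpha|-|\beta|)/2}$, so the first summand is at most $C\phi M^{-\epsilon(\alpha,\beta)}\lr{\xi}^{(|\alpha|-|\beta|)/2}$, exactly the $S(\phi,g_\epsilon)$ target. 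The last term, which appears only for $\partial_{\xi_j}$, has size $\lr{\xi}^{-2}/\omega$; the lower bound $\phi\omega\ge\lr{\xi}^{-1}/2$ from \eqref{eq:phi:uesita} converts it into $2\phi\lr{\xi}^{-1}$, and the constraint $\lr{\xi}\ge\gamma M^{1/2}$ with $\gamma\ge M^4$ from \eqref{eq:seigen}--\eqref{eq:conic:nbd} supplies the missing $M^{-\delta_{\epsilon b}}$ factor needed in case $(b)$.

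For $|\alpha+\beta|\ge 2$, I would induct on the order. Multiplying the first-order identity by $\omega$ gives $\omega\,\partial\phi+\phi\,\partial\psi = \partial\lr{\xi}^{-1}/2$. Applying $\partial^{\gamma}$ with $|\gamma|=|\alpha+\beta|-1$ via Leibniz and solving for the leading term $\omega\,\partial^{(\alpha,\beta)}\phi$, the right-hand side contains only strictly lower-order derivatives of $\phi$ (covered by induction), of $\omega$ (Lemma \ref{lem:hyoka:ome:a}), of $\psi$ (Lemma \ref{lem:hyoka:psi:a}), and of $\lr{\xi}^{-1}$ (elementary). After dividing by $\omega$ and assembling the $g_\epsilon$-weight factors, every term collapses to $C\phi M^{-\epsilon(\alpha,\beta)}\lr{\xi}^{(|\alpha|-|\beta|)/2}$.

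The main obstacle I anticipate is the bookkeeping required to absorb the residual $\omega^{-k}\lr{\xi}^{-m}$ factors into powers of $\phi$: each $\omega^{-1}$ introduced by the division or by differentiating $1/\omega$ must be paired with a matching $\lr{\xi}^{-1}$ so that the inequality $\phi\ge\lr{\xi}^{-1}/(2\omega)$ can turn the combination into $\phi$, while the complementary bound $\phi\le 2\omega$ controls the opposite direction. Once this combinatorial accounting is carried out, all terms fit the target bound and the lemma follows.
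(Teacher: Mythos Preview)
Your approach is essentially the same as the paper's: both derive the first-order identity
\[
\dif_x^{\alpha}\dif_{\xi}^{\beta}\phi=\frac{-\dif_x^{\alpha}\dif_{\xi}^{\beta}\psi}{\omega}\,\phi+\frac{\dif_x^{\alpha}\dif_{\xi}^{\beta}\lr{\xi}^{-1}}{2\omega}
\]
and then induct on $|\alpha+\beta|$, invoking Lemmas~\ref{lem:hyoka:psi:a} and~\ref{lem:hyoka:ome:a} together with $\phi\geq \lr{\xi}^{-1}/(2\omega)$. The paper's organization is slightly cleaner: it names the two pieces $\phi_{\alpha\beta}=-\omega^{-1}\dif\psi$ and $\psi_{\alpha\beta}=(2\omega)^{-1}\dif\lr{\xi}^{-1}$, shows once that $\phi_{\alpha\beta}\in S(M^{-\ep(\al,\be)}\lr{\xi}^{(|\al|-|\be|)/2},g_{\ep})$ and $\psi_{\alpha\beta}\in S(\phi\, M^{-\ep(\al,\be)}\lr{\xi}^{(|\al|-|\be|)/2},g_{\ep})$, and then the induction is a one-line Leibniz application to $\phi_{\alpha\beta}\phi+\psi_{\alpha\beta}$. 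Your version, multiplying through by $\omega$ before differentiating, works too but forces you to redo the $\omega^{-1}$ bookkeeping at every step rather than absorbing it into the coefficient classes once.

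One correction: the inequality you cite, ``$\lr{\xi}\geq \gamma M^{1/2}$ from \eqref{eq:conic:nbd}'', holds only on the conic neighborhood $W_{M,\gamma}$, whereas symbol estimates must hold on all of $\R^d\times\R^d$. The correct global fact is $\lr{\xi}=(\gamma^2+|\xi|^2)^{1/2}\geq \gamma\geq M^4$ from \eqref{eq:seigen}, which gives $\lr{\xi}^{-1/2}\leq M^{-2}\leq M^{-\delta_{\ep b}}$ and is exactly what the paper uses (in the form $\lr{\xi}^{-|\al+\be|/2}\leq M^{-\ep(\al,\be)}$). With that fix your argument goes through.
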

\begin{proof}
Let $|\alpha+\beta|=1$ and write
\begin{equation}
\label{eq:Phi:bunkai}
\dif_x^{\alpha}\dif_{\xi}^{\beta}\phi=\frac{-\dif_x^{\alpha}\dif_{\xi}^{\beta}\psi}{\omega}\phi+\frac{\dif_x^{\alpha}\dif_{\xi}^{\beta}\lr{\xi}^{-1}}{2\omega}=\phi_{\alpha\beta}\phi+\psi_{\alpha\beta}.
\end{equation}
Since $\omega^{-1}\in S(\omega^{-1}, g_{\ep})$ by Lemma \ref{lem:hyoka:ome:a} then
\begin{gather*}
\big|\dif_x^{\mu}\dif_{\xi}^{\nu}\big(\psi_{\alpha\beta}\big)\big|\precsim \omega^{-1}\lr{\xi}^{-1}M^{-\ep(\mu,\nu)}\lr{\xi}^{(|\alpha+\mu|-|\beta+\nu|)/2}\lr{\xi}^{-|\al+\be|/2}
\\
\precsim \phi M^{-\ep(\al+\mu,\be+\nu)}\lr{\xi}^{(|\alpha+\mu|-|\beta+\nu|)/2}
\end{gather*}
in view of $\lr{\xi}^{-|\al+\be|/2}\leq M^{-\ep(\al,\be)}$ and \eqref{eq:phi:uesita}. On the other hand thanks to Lemmas \ref{lem:hyoka:psi:a} and \ref{lem:hyoka:ome:a} it follows that 
\begin{gather*}
|\dif_x^{\mu}\dif_{\xi}^{\nu}\phi_{\alpha\beta}|
\precsim M^{-\ep(\al+\mu,\be+\nu)}\lr{\xi}^{(|\al+\mu|-|\be+\nu|)/2}.
\end{gather*}
Hence using \eqref{eq:Phi:bunkai} the assertion is proved by induction on $|\alpha+\beta|$. 
\end{proof}
\begin{lem}
\label{lem:dif:Phi:seimitu}We have
\begin{gather*}
\dif_x^{\alpha}\dif_{\xi}^{\beta}\phi\in S(\omega^{-1}M^{-\ep(\al,\be)}\lr{\xi}^{-1/2}\lr{\xi}^{(|\al|-|\beta|)/2}\phi, g_{\ep}),\quad |\alpha+\beta|\geq 1.
\end{gather*}
\end{lem}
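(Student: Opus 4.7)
The plan is to exploit the explicit algebraic representations
\begin{equation*}
\dif_{x_j}\phi = -\frac{\phi}{\omega}\,\dif_{x_j}\psi, \qquad \dif_{\xi_j}\phi = -\frac{\phi}{\omega}\,\dif_{\xi_j}\psi + \frac{1}{2\omega}\,\dif_{\xi_j}\lr{\xi}^{-1},
\end{equation*}
obtained by direct differentiation of $\phi = \omega + (t-\psi)$ and $\omega = \sqrt{(t-\psi)^2 + \lr{\xi}^{-1}}$, together with the algebraic simplification $(t-\psi)/\omega + 1 = \phi/\omega$. These identities exhibit the desired extra factor $\phi/\omega$ already at the first-derivative level, which is precisely what must be recovered in the target symbol class.

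For the base case $|\al+\be|=1$ the bound follows at once. The $\psi$-derivatives are controlled by Lemma \ref{lem:hyoka:psi:a}, and since $0\leq \phi/\omega\leq 2$, multiplication gives the claimed weight on the leading terms. The inhomogeneous piece $(2\omega)^{-1}\dif_{\xi_j}\lr{\xi}^{-1}$ is of size $\omega^{-1}\lr{\xi}^{-2}$, and the lower bound $\phi\geq M\lr{\xi}^{-1}/C$ from \eqref{eq:phi:uesita} yields $\lr{\xi}^{-1}\leq CM^{-1}\phi\leq CM^{-\delta_{\ep b}}\phi$, which places this piece in $S(\omega^{-1}M^{-\delta_{\ep b}}\lr{\xi}^{-1}\phi, g_{\ep})$, matching the target for $|\be|=1$.

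For $|\al+\be|\geq 2$, the plan is induction: if $\al_j\geq 1$, apply $\dif_x^{\al-e_j}\dif_\xi^{\be}$ to the first-order formula and expand via Leibniz (and analogously for $\be_j\geq 1$). Lemma \ref{lem:dif:Phi} gives $\phi\in S(\phi,g_{\ep})$ and Lemma \ref{lem:hyoka:ome:a} with $s=-1$ gives $\omega^{-1}\in S(\omega^{-1},g_{\ep})$, so $\phi/\omega\in S(\phi/\omega,g_{\ep})$ by the product rule; combined with the refined $\psi$-estimates of Lemma \ref{lem:hyoka:psi:a}, each Leibniz summand contributes a bound of the form $C(\phi/\omega)\,M^{-\delta_{\ep a}-\ep(\al-e_j,\be)}\lr{\xi}^{(|\al|-1-|\be|)/2}$, which collapses to $C\omega^{-1}M^{-\ep(\al,\be)}\lr{\xi}^{-1/2}\lr{\xi}^{(|\al|-|\be|)/2}\phi$. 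The $\xi$-case is handled in the same way, with the residual $\dif_{\xi_j}\lr{\xi}^{-1}$-term absorbed at every order via \eqref{eq:phi:uesita}.

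The main obstacle will be bookkeeping rather than any genuine new idea: one must check that the index shift $\ep(\al-e_j,\be)+\delta_{\ep a}=\ep(\al,\be)$ (and its $\xi$-analogue) correctly absorbs the $\delta_{\ep a}$ (resp.\ $\delta_{\ep b}$) produced by Lemma \ref{lem:hyoka:psi:a}, and that the residual $(2\omega)^{-1}\dif_{\xi_j}\lr{\xi}^{-1}$ piece, which carries no natural $\phi$-factor, continues to embed into the target class under repeated differentiation via the conversion $\lr{\xi}^{-1}\leq CM^{-1}\phi$.
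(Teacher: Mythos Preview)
Your proposal is correct and follows essentially the same route as the paper: both arguments rest on the first-order identity $\dif_x^{\al}\dif_\xi^{\be}\phi = -(\dif_x^{\al}\dif_\xi^{\be}\psi/\omega)\phi + \dif_x^{\al}\dif_\xi^{\be}\lr{\xi}^{-1}/(2\omega)$ for $|\al+\be|=1$, together with $\phi\in S(\phi,g_{\ep})$, $\omega^{-1}\in S(\omega^{-1},g_{\ep})$, Lemma~\ref{lem:hyoka:psi:a}, and the conversion $\lr{\xi}^{-1}\precsim\phi$ from \eqref{eq:phi:uesita}. The only cosmetic difference is that you frame the passage to $|\al+\be|\geq 2$ as an ``induction via Leibniz,'' whereas the paper simply observes that the product $(\phi/\omega)\cdot\dif_x^{\al}\dif_\xi^{\be}\psi$ lies in the target symbol class as a \emph{full} symbol, which already encodes all higher-derivative bounds; no inductive step is actually needed once the first-derivative statement is read as membership in $S(\cdot,g_{\ep})$ rather than a mere pointwise estimate.
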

\begin{proof}
One has $\phi_{\alpha\beta}\in S(\omega^{-1}M^{-\ep(\al,\be)}\lr{\xi}^{-1/2}\lr{\xi}^{(|\al|-|\beta|)/2},g_{\ep})$ for $|\alpha+\beta|\geq1$ by Lemma   \ref{lem:hyoka:psi:a} . From Lemma \ref{lem:hyoka:ome:a}  it follows that
\[
\big|\dif_x^{\mu}\dif_{\xi}^{\nu}\big(\psi_{\alpha\beta}\big)\big|\precsim \omega^{-1}\lr{\xi}^{-1-|\beta|}M^{-\ep(\mu,\nu)}\lr{\xi}^{(|\mu|-|\nu|)/2}
\]
for $|\alpha+\beta|\geq 1$ because $\dif_x^{\al}\dif_{\xi}^{\be}\lr{\xi}^{-1}\in S(\lr{\xi}^{-1-|\be|}, g_{\ep})$ is clear. Since $C\phi\lr{\xi}\geq 1$ and $\lr{\xi}^{-|\be|}\leq M^{-\ep(\al,\be)}\lr{\xi}^{(|\al|-|\be|)/2}$ hence
\[
\psi_{\alpha\beta}\in S(\omega^{-1}M^{-\ep(\al,\be)} \lr{\xi}^{-1/2}\lr{\xi}^{(|\al|-|\beta|)/2}\phi, g_{\ep}),\quad |\alpha+\beta|\geq 1.
\]
Since $\phi\in S(\phi, g_{\ep})$ by Lemma \ref{lem:dif:Phi}  we conclude the assertion from \eqref{eq:Phi:bunkai}. 
\end{proof}
%
%%%%%%%%%%%%%%%%%%%%%%%%%%%%%%%%
%

%%%%%%%%%%%%%
%%Lemma\ref{phi:cho:seimitu}
\noindent
{Proof of Lemma \ref{lem:phi:cho:seimitu}}:
Assume $(a)$. Since $\dif_{\xi_j}\psi\in S(M^{-1}\lr{\xi}^{-1}, G)$ for $j\neq 1$  by Lemma \ref{lem:kihon:2} then the assertion  follows from \eqref{eq:Phi:bunkai}. The assertion for the case $(b)$  is proved similarly.
\qed
%

 %%%%%%%%%%%%%%%%%%%%%%%%%%%%%%%%%%%
 
 %
%%%%%%%%%%%%%%%%%%%%%%%%%%%%%%%%%%%%%
\subsection{Proof of Proposition \ref{pro:omephi:to:g}}
\label{sec:metric}

%%%%%%%%%%%%%%%%%%%%%%%%%%%%%%%%%%%%%%

We start with showing
\begin{lem}
\label{lem:g:conti}There is $C>0$ such that
\[
\omega(z+w)\leq C\omega(z)(1+g_{\ep, z}(w)),\;\; \phi(z+w)\leq C\phi(z)(1+g_{\ep, z}(w))
\]
\end{lem}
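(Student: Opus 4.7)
The plan is to prove the two inequalities by combining Taylor expansion with a case analysis on the size of $|\eta|$ relative to $\lr{\xi}$, where $w=(y,\eta)$. I split into two regimes. \emph{Case A:} $|\eta|\leq \lr{\xi}/2$, so that $\lr{\xi+t\eta}$ remains comparable to $\lr{\xi}$ uniformly in $t\in[0,1]$. \emph{Case B:} $|\eta|>\lr{\xi}/2$, in which the $\xi$-part of the metric forces $g_{\ep,z}(w)$ to be large: using $\gamma\geq M^4$ one has $M^{2\delta_{\ep b}}\leq M^2\leq \lr{\xi}^{1/2}$, whence $g_{\ep,z}(w)\geq M^{-2\delta_{\ep b}}\lr{\xi}/4 \geq \lr{\xi}^{1/2}/4$.

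For the $\omega$-bound, I would apply Taylor to $\omega^2=(t-\psi)^2+\lr{\xi}^{-1}$. Setting $\Delta=\psi(z+w)-\psi(z)$, one has
\[
\omega^2(z+w)-\omega^2(z) = -2(t-\psi(z))\Delta + \Delta^2 + (\lr{\xi+\eta}^{-1}-\lr{\xi}^{-1}).
\]
In Case A, Lemma \ref{lem:hyoka:psi:a} gives $|\Delta|\leq C\lr{\xi}^{-1/2}g_{\ep,z}(w)^{1/2}$, and a direct computation combined with $M^{\delta_{\ep b}}\leq \lr{\xi}^{1/4}$ (from $\gamma\geq M^4$) gives $|\lr{\xi+\eta}^{-1}-\lr{\xi}^{-1}|\leq C\lr{\xi}^{-1}g_{\ep,z}(w)^{1/2}$; together with $|t-\psi(z)|\leq\omega(z)$ and $\omega(z)^2\geq\lr{\xi}^{-1}$ this yields $\omega^2(z+w)\leq C'\omega^2(z)(1+g_{\ep,z}(w))$, and taking square roots finishes Case A. In Case B, the global bounds $\omega(z+w)\leq CM^{-1}$ and $\omega(z)\geq \lr{\xi}^{-1/2}$ give $\omega(z+w)/\omega(z)\leq C\lr{\xi}^{1/2}\leq 4C(1+g_{\ep,z}(w))$.

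For the $\phi$-bound, the key observation in Case A is the decomposition
\[
\phi(z+w)-\phi(z) = \frac{-2\Delta\,\phi(z) + \Delta^2 - \Delta(\omega(z+w)-\omega(z)) + (\lr{\xi+\eta}^{-1}-\lr{\xi}^{-1})}{\omega(z+w)+\omega(z)},
\]
which I would obtain by writing $\omega(z+w)-\omega(z)$ as $[\omega^2(z+w)-\omega^2(z)]/[\omega(z+w)+\omega(z)]$, substituting into $\phi(z+w)-\phi(z)=(\omega(z+w)-\omega(z))-\Delta$, and using the identity $2(t-\psi(z))+2\omega(z)=2\phi(z)$ to collect the $\Delta$-linear part. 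The leading term carries a prefactor $\phi(z)$ (not $\omega(z)$), giving a contribution $\lesssim \phi(z)g_{\ep,z}(w)^{1/2}$; the quadratic remainders are absorbed into $C\phi(z)(1+g_{\ep,z}(w))$ using the lower bound $\phi(z)\geq cM\lr{\xi}^{-1}$ from \eqref{eq:phi:uesita} together with $\lr{\xi}\geq M^4$. In Case B, from $\phi(z+w)\leq CM^{-1}$ and $\phi(z)\geq cM\lr{\xi}^{-1}$, we get $\phi(z+w)/\phi(z)\leq C'\lr{\xi}/M^2\leq C'\lr{\xi}^{1/2}\leq C''(1+g_{\ep,z}(w))$ since $M^2\leq \lr{\xi}^{1/2}$.

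The main obstacle is the Case A analysis for $\phi$: one must exploit the cancellation producing $\phi(z)$ (rather than $\omega(z)$) as the prefactor of the leading $\Delta$-linear term, because the ratio $\omega(z)/\phi(z)$ can be as large as $\lr{\xi}/M^2$ and a naïve bound via $\phi\leq 2\omega$ would not yield the linear-in-$g$ conclusion. Handling the quadratic remainders then hinges on the quantitative lower bound $\phi(z)\geq cM\lr{\xi}^{-1}$ and the parameter constraint $\gamma\geq M^4$.
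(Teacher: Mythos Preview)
Your overall strategy matches the paper's: split on $|\eta|$ versus $\lr{\xi}$, and in the small-$\eta$ case exploit the algebraic identity for $\phi(z+w)-\phi(z)$ with denominator $\omega(z+w)+\omega(z)$. Your identity is correct and is just a rewriting of the paper's formula \eqref{eq:Phi:sa}. However there are two genuine gaps in your bounds for $\phi$.

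\textbf{Case B for $\phi$.} Your chain $\lr{\xi}/M^2\leq C'\lr{\xi}^{1/2}$ ``since $M^2\leq\lr{\xi}^{1/2}$'' goes the wrong way: $M^2\leq\lr{\xi}^{1/2}$ gives $\lr{\xi}/M^2\geq\lr{\xi}^{1/2}$, not $\leq$. The bound $g_{\ep,z}(w)\geq\lr{\xi}^{1/2}/4$ was enough for $\omega$ but is too weak here. You must keep the sharper bound you already wrote, $g_{\ep,z}(w)\geq M^{-2\delta_{\ep b}}\lr{\xi}/4\geq M^{-2}\lr{\xi}/4$, and compare directly: $\phi(z+w)/\phi(z)\leq C\lr{\xi}/M^2\leq 4C\,g_{\ep,z}(w)$. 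This is precisely \eqref{eq:kanzou} in the paper.

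\textbf{Case A for $\phi$: the quadratic remainders.} Your claimed absorption ``using $\phi(z)\geq cM\lr{\xi}^{-1}$ together with $\lr{\xi}\geq M^4$'' does not work. With only the crude denominator bound $\omega(z+w)+\omega(z)\geq\lr{\xi}^{-1/2}$, the term $\Delta^2/(\omega(z+w)+\omega(z))$ is of size $C\lr{\xi}^{-1/2}g_{\ep,z}(w)$, and the ratio $\lr{\xi}^{-1/2}/\phi(z)$ can be as large as $\lr{\xi}^{1/2}/M\geq M$, which is unbounded. The missing ingredient is the \emph{other} inequality in \eqref{eq:phi:uesita}, namely $\phi(z)\geq\lr{\xi}^{-1}/(2\omega(z))$, i.e.\ $\omega(z)\phi(z)\geq\lr{\xi}^{-1}/2$. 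Using $\omega(z+w)+\omega(z)\geq\omega(z)$ together with $|\Delta|^2\leq C\lr{\xi}^{-1}g_{\ep,z}(w)$ and $|\Delta(\omega(z+w)-\omega(z))|\leq C\lr{\xi}^{-1}g_{\ep,z}(w)$ (the latter from the sharp bound $|\omega(z+w)-\omega(z)|\leq C\lr{\xi}^{-1/2}g_{\ep,z}(w)^{1/2}$ you obtained for $\omega$) gives
\[
\frac{\Delta^2+|\Delta(\omega(z+w)-\omega(z))|}{\omega(z+w)+\omega(z)}\leq \frac{C\lr{\xi}^{-1}g_{\ep,z}(w)}{\omega(z)}\leq 2C\,\phi(z)\,g_{\ep,z}(w),
\]
which is exactly what you need. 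The paper arrives at the same point by keeping $\phi(z+w)+\phi(z)$ on the right and doing a subcase split on the size of $\lr{\xi}^{-1/2}(1+g_{\ep,z}(w))^{1/2}/(\omega(z+w)+\omega(z))$, using $\phi(z)\geq\lr{\xi}^{-1}/(2\omega(z))$ in the ``large'' subcase; your decomposition avoids the subcase split but needs the same key inequality.
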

\begin{proof} 

First recall that
$ \lr{\xi}^{-1/2}\leq \omega \leq CM^{-1}$.
Assume $|\eta|\geq c\,\lr{\xi}$ hence $g_{\ep, z}(w)\geq c^2 M^{-2}\lr{\xi}\geq c^2M^{-2}\lr{\xi}^{1/2}\lr{\xi}^{1/2}\geq \lr{\xi}^{1/2}$. Therefore
\begin{equation}
\label{eq:ome:kan}
\omega(z+w)\leq CM^{-1}\leq CM^{-1}\lr{\xi}^{1/2}\omega(z)
\leq C\omega(z)(1+g_{\ep, z}(w)).
\end{equation}
Assume $|\eta|\leq c\,\lr{\xi}$. 
 Denote $f=t-\psi$ and $h=\lr{\xi}^{-1/2}$ so that $\omega^2=f^2+h^2$. Note that
\begin{equation}
\label{eq:ome:sa:a}
\begin{split}
|\omega(z+w)-\omega(z)|=|\omega^2(z+w)-\omega^2(z)|/|\omega(z+w)+\omega(z)|\\
\leq 2|f(z+w)-f(z)|+2|h(z+w)-h(z)|
\end{split}
\end{equation}
because $
|f(z+w)+f(z)|/|\omega(z+w)+\omega(z)|$ and $|h(z+w)+h(z)|/|\omega(z+w)+\omega(z)|$ 
are bounded by $2$. It is assumed that constants $C$ may change from line to line but independent of $\gamma\geq M^2\geq 1$. Noting $|f(z+w)-f(z)|=|\psi(z+w)-\psi(z)|$ it follows  from Lemma \ref{lem:hyoka:psi:a} that
\begin{equation}
\label{eq:f:sasa}
\begin{split}
|f(z+w)-f(z)|\leq C (M^{-\delta_{\ep a}}|y|+M^{-\delta_{\ep b}}\lr{\xi+s\eta}^{-1}|\eta|)\\
\leq C \lr{\xi}^{-1/2}(M^{-\delta_{\ep a}}\lr{\xi}^{1/2}|y|+M^{-\delta_{\ep b}}\lr{\xi}^{-1/2}|\eta|)\leq C\omega(z)g_{\ep, z}^{1/2}(w).
\end{split}
\end{equation}
Similarly  we see $
|h(z+w)-h(z)|\leq C\lr{\xi}^{-1}g_{\ep, z}^{1/2}(w)\leq C\lr{\xi}^{-1/2}\omega(z)g_{\ep, z}^{1/2}$. 
Therefore \eqref{eq:ome:sa:a} gives $
|\omega(z+w)-\omega(z)|\leq C\omega(z)g_{\ep, z}^{1/2}(w)$ hence $\omega(z+w)\leq C\omega(z)(1+g_{\ep, z}(w))^{1/2}$.

Turn to $\phi$. If $|\eta|\geq \lr{\xi}/2$ then $g_{\ep, z}(w)\geq M^{-2}\lr{\xi}/4$ hence, taking into account \eqref{eq:phi:uesita} we have
\begin{equation}
\label{eq:kanzou}
\phi(z+w)\leq CM^{-1}\leq CM^{-2}\lr{\xi}\phi(z)\leq C\phi(z)(1+g_{\ep, z}(w)).
\end{equation}
Assume $|\eta|\leq \lr{\xi}/2$ so that \eqref{eq:doto} holds. Note that   $\phi(z+w)-\phi(z)$ is equal to 
\begin{equation}
\label{eq:Phi:sa}
\begin{split}
\frac{(f(z+w)-f(z))(\phi(z+w)+\phi(z))+h^2(z+w)-h^2(z)}{\omega(z+w)+\omega(z)}.
\end{split}
\end{equation}
for $\phi=\omega+f$. From \eqref{eq:f:sasa} it results that $
|f(z+w)-f(z)|\leq C\lr{\xi}^{-1/2}g_{\ep,  z}^{1/2}(w)$. 
It is easy to see that $
|h^2(z+w)-h^2(z)|\leq CM\lr{\xi}^{-3/2	}g_{\ep, z}^{1/2}(w)$. 
Taking these into account \eqref{eq:Phi:sa} yeilds
\begin{equation}
\label{eq:Phi:sa:bis}
\begin{split}
|\phi(z+w)-\phi(z)|\leq C\Big(\frac{\lr{\xi}^{-1/2}}{\omega(z+w)+\omega(z)}(\phi(z+w)+\phi(z))\\
+\frac{M\lr{\xi}^{-3/2}}{\omega(z+w)+\omega(z)}\Big)(1+g_{\ep, z}(w))^{1/2}.
\end{split}
\end{equation}
Since $\phi(z)\geq M\lr{\xi}^{-1}/C$ we have
\begin{align*}
|\phi(z+w)-\phi(z)|\leq C\Big(\frac{\lr{\xi}^{-1/2}}{\omega(z+w)+\omega(z)}(\phi(z+w)+\phi(z))\\
+\frac{\lr{\xi}^{-1/2}}{\omega(z+w)+\omega(z)}\phi(z)\Big)(1+g_{\ep, z}(w))^{1/2}\\
=C(\phi(z+w)+2\phi(z))\frac{\lr{\xi}^{-1/2}}{\omega(z+w)+\omega(z)}(1+g_{\ep, z}(w))^{1/2}.
\end{align*}
If $\lr{\xi}^{-1/2}(1+g_{\ep, z}(w))^{1/2}\big/(\omega(z+w)+\omega(z))<1/3$ then it follows 
\[
\big|\phi(z+w)/\phi(z)-1\big|\leq (\phi(z+w)/\phi(z)+2)/3
\]
from which we have $2\phi(z+w)/5\leq \phi(z)\leq 4\,\phi(z+w)$. If
\begin{equation}
\label{eq:kagi}
\lr{\xi}^{-1/2}(1+g_{\ep, z}(w))^{1/2}\big/(\omega(z+w)+\omega(z))\geq 1/3
\end{equation}
we have, noting $\phi(z)\geq \lr{\xi}^{-1}/(2\omega(z))$, from \eqref{eq:kagi}
\begin{align*}
18(1+g_{\ep, z}(w))\geq 4\lr{\xi}\omega(z+w)\omega(z)\geq \phi(z+w)\big/\phi(z)
\end{align*}
in view of  an obvious inequality $ 2\,\omega(z+w)\geq \phi(z+w)$. Thus  \eqref{eq:kanzou}.
\end{proof}
%
%%%%%%%%%%
%%%%%%%%%%%%%%%%%%%%%%%%%
%%%%%%%%%%%%%%%%%%%%%%%%%%%%
\subsection{Proof of lemmas in Section \ref{sec:pdo:bound}}

%%%%%%%%%%%%%%%%
%%
\noindent
Proof of Lemma \ref{lem:gyaku}:  
Note that $g_{\ep}\leq {\bar g}={\bar g^{\sigma}}\leq g_{\ep}^{\sigma}$.  In this proof every constant is independent of $\gamma\geq 1$ and $M$. It is clear that $p^{-1}\in S(m^{-1},g_{\ep})$. Write $p\#p^{-1}=1-r$ where $r\in S(M^{-1},g_{\ep})$.  Since 
 \[
 |r|^{(l)}_{S(1,{\bar g})}=\sup_{|\al+\be|\leq l, (x,\xi)\in\R^{2d}}\big|\lr{\xi}^{(|\be|-|\al|)/2}\dif_x^{\al}\dif_{\xi}^{\be}r\big|\leq C_lM^{-1}
 \]
 from the $L^2$-boundedness theorem (see \cite[Theorem 18.6.3]{Hobook}) we have $\|{\rm op}(r)\|\leq CM^{-1}$. Therefore for large $M$ there exists the inverse $(1-\op{r})^{-1}$ in ${\mathcal L}(L^2, L^2)$ which is given by $1+\sum_{\ell=1}^{\infty}r^{\#\ell}\in S(1,{\bar g})$.  (see \cite{Be}, \cite{Ler}, \cite{Ku} ).  Denote $k=\sum_{\ell=1}^{\infty}r^{\#\ell}\in S(1, {\bar g})$ and we will prove $k\in S(M^{-1}, g_{\ep})$. It can be seen from the proof (see, e.g. \cite{Ler}, \cite{Ku} ) that for any $l\in \N$ one can find $C_l>0$, independent of $\gamma$,  such that
\[
|k|^{(l)}_{S(1,{\bar g})}\leq C_l
\]
because $|k|^{(l)}_{S(1,{\bar g})}$ depends only on $l$,  $ |r|^{(l')}_{S(1,{\bar g})}$ with some $l'=l'(l)$ and structure constants of ${\bar g}$ which is independent of $\gamma$. Note that $k$ satisfies $(1-r)\#(1+k)=1$, that is
\begin{equation}
\label{eq:knosiki}
k=r+r\#k.
\end{equation}
Since $r\in S(M^{-1},g_{\ep})$ and $g_{\ep}\leq {\bar g}$ it follows from \eqref{eq:knosiki} that  $
\big|k\big|^{(l)}_{S(1,{\bar g})}\leq C_lM^{-1}$.
Assume that
\begin{equation}
\label{eq:kino}
\sup\big|\lr{\xi}^{(|\be|-|\al|)/2}\dif_x^{\alpha}\dif_{\xi}^{\beta}k\big|\leq C_{\al,\be,\nu}M^{-1-l},\quad \ep(\alpha,\be)\geq l
\end{equation}
for $0\leq l\leq \nu$.
Let $\ep(\alpha, \be)\geq \nu+1$ and note that
\[
\dif_x^{\alpha}\dif_{\xi}^{\beta}k=\dif_x^{\alpha}\dif_{\xi}^{\beta}r+\sum C_{\cdots}\big(\dif_x^{\alpha''}\dif_{\xi}^{\beta''}r\big)\#\big(\dif_x^{\alpha'}\dif_{\xi}^{\beta'}k\big)
\]
where $\alpha'+\alpha''=\alpha$ and $\beta'+\beta''=\beta$. From the assumption \eqref{eq:kino} we have $\dif_x^{\al'}\dif_{\xi}^{\be'}k\in S(M^{-1-\nu}\lr{\xi}^{(|\al'|-|\be'|)/2}, {\bar g})$  if $\ep(\al',\be')\geq \nu+1$ and  if $\ep(\al',\be')\leq \nu$ we have $\dif_x^{\al'}\dif_{\xi}^{\be'}k\in S(M^{-1-\ep(\al',\be')}\lr{\xi}^{(|\al'|-|\be'|)/2}, {\bar g})$. Since $r\in S(M^{-1}, g_{\ep})$ one has
\[
\big(\dif_x^{\alpha''}\dif_{\xi}^{\beta''}r\big)\#\big(\dif_x^{\alpha'}\dif_{\xi}^{\beta'}k\big)\in S(M^{-1-(\nu+1)}\lr{\xi}^{(|\al|-|\be|)/2}, {\bar g})
\]
which implies that \eqref{eq:kino} holds for $0\leq l\leq \nu+1$ and hence for all $\nu$ by induction on $\nu$. 
This  proves that $k\in S(M^{-1},g_{\ep})$.  The proof of the assertions for ${\tilde k}$ is similar.
\qed

\smallskip

%%%%%%%%%%%%%%
%%
\noindent
Proof of Lemma \ref{lem:Ni:hon:1}: One can assume $c=0$. We see that $q(x,\xi)+M^{-1/2}$ is an admissible weight for ${\bar g}$ and $(q+M^{-1/2})^{1/2}\in S((q+M^{-1/2})^{1/2}, {\bar g})$. Moreover $\dif_x^{\al}\dif_{\xi}^{\be}(q+M^{-1/2})^{1/2}\in S(M^{-1/2}\lr{\xi}^{(|\al|-|\be|)/2}, {\bar g})$ for $|\al+\be|=1$. Therefore
\[
q+M^{-1/2}=(q+M^{-1/2})^{1/2}\#(q+M^{-1/2})^{1/2}+r,\quad r\in S(M^{-1}, {\bar g}
)
\]
which proves the assertion.
\qed
\smallskip

%%%%%%%%%%%%%%%%%%
%%
\noindent
Proof of Lemma \ref{lem:kihon:fu}:
First note that $m^{\pm 1/2}$ are admissible weights and $m^{\pm 1/2}\in S(m^{\pm1/2},g_{\ep})$. Since $m=m^{1/2}\#m^{1/2}-r$ with $r\in S(M^{-2}m, g_{\ep})$ write
\[
{\tilde r}=(1+k)\#m^{-1/2}\#r\#m^{-1/2}\#(1+{\tilde k})\in S(M^{-1}, g_{\ep})
\]
such that $m^{1/2}\#{\tilde r}\#m^{1/2}=r$.
Therefore one has $
m=m^{1/2}\#(1+{\tilde r})
\#m^{1/2}$ 
and the first assertion follows from Lemma \ref{lem:Ni:hon:2}. 
Write 
\[
{\tilde q}=(1+k)\#m^{-1/2}\#q\#m^{-1/2}\#(1+{\tilde k})\in S(1,g_{\ep})
\]
where $m^{1/2}\#(1+k)\#m^{-1/2}=1$ and $m^{-1/2}\#(1+{\tilde k})\#m^{1/2}=1$ such that
\[
m^{1/2}\#{\tilde q}\#m^{1/2}=q.
\]
Since $k$, ${\tilde k}\in S(M^{-1},g_{\ep})$ one can write ${\tilde q}=q m^{-1}+r$ with $r\in S(M^{-1}, g_{\ep})$. Thanks to Lemma \ref{lem:Ni:hon:2} we have $
\|\op{qm^{-1}}v\|\leq (\sup{\big(|q|/m\big)}+CM^{-1/2})\|v\|$ 
hence
\begin{align*}
\big|(\op{q}u,u)\big|
\leq \big|(\op{qm^{-1}}\op{m^{1/2}}u,\op{m^{1/2}}u)|+ CM^{-1/2}\|\op{m^{1/2}}u\|^2
\end{align*}
proves the second assertion.
\qed

\smallskip

%%%%%%%%%%%%%%%
%%
\noindent
Proof of Lemma \ref{lem:m:1:2}
Write  ${\tilde m}_2=m_2\#m_1^{-1}\#(1+k)$ such that $m_2={\tilde m}_2\#m_1$ with $k\in S(M^{-1},g_{\ep})$.  Since ${\tilde m}_2\in S(1, g_{\ep})$ one has 
\[
\|\op{m_2}u\|=\|\op{{\tilde m}_2}\op{m_1}u\|\leq C'\|\op{m_1}u\|
\]
which proves the assertion.
\qed
%

%%%%%%%%%%%%%%%%%%%%%%%%
%%%%%%%%%%%%%%%%%%%%%%%%
%%%%%%%%%%%%%%%%%%%%
\section{Proof of Proposition \ref{pro:p:to:psi}}
\label{sec:furoku}

\subsection{Geometric characterization of effectively hyperbolic singular points}
\label{sec:geo:char}

In this subsection, for typographical reason, we write $x_0$, $\xi_0$ instead of $t$, $\tau$ respectively and $x=(x_0, x')=(x_0, x_1,\ldots, x_d)$, $\xi=(\xi_0, \xi')=(\xi_0, \xi_1,\ldots, \xi_d)$ so that $
p(x, \xi)=-\xi_0^2+a(x, \xi')$. 
Let $\rho=(0, {\bar \xi})$ be a singular point of $p=0$ and hence ${\bar \xi}_0=0$. We denote $\rho'=(0, {\bar \xi}')$. Consider the Hamilton equation
\[
\frac{d}{ds}\begin{bmatrix}x\\
\xi\end{bmatrix}=J\,\nabla p(x, \xi), \quad \nabla p(x, \xi)=\begin{bmatrix}\dif p(x, \xi)/\dif x\\
\dif p(x, \xi)/\dif \xi\end{bmatrix},\;\; J=\begin{bmatrix}O&I\\
-I&O
\end{bmatrix}
\]
where $I$ is the identity matrix of order $d+1$. We linearize the Hamilton equation at $\rho$. It is clear that the linearization is $dX/ds=J\,\nabla^2p(\rho) X$ with $X={^t}(x, \xi)$ where $\nabla^2p(\rho)$ is the Hesse matrix of $p$ at $\rho$. The coefficient matrix $J\nabla^2p(\rho)$, denoted by $F_p(\rho)$, is called the Hamilton map of $p$ at $\rho$. Therefore denoting the quadratic form defined by the  Hesse matrix by $Q(X, Y)=\olr{X, \nabla^2 p(\rho)Y}$ it is clear that 
\[
Q(X, Y)=\olr{JX, F_p(\rho)Y}={\sigma}(X, F_p(\rho)Y)
\]
because $^{t}\!JJ=I_{2d+2}$ where $\sigma(X, Y)=\olr{JX, Y}$ is the symplectic two form on $V=\R^{d+1}\times\R^{d+1}$. From the definition we see $p(\rho+\ep X)=\ep^2Q(X)/2+O(\ep^3)$ as $\ep\to 0$ and $Q$ has the signature $(r, 1)$ with some $r\geq 0$ because $a(x, \xi')\geq 0$.
Since $a(x, \xi')$ is nonnegative near $\rho'$ the Morse lemma (see, e.g. \cite[Lemma C.6.2]{Hobook} shows that one can find $\phi_1, \ldots, \phi_r$ and $g$ vanishing at $\rho'$, homogeneous of degree $1$, $2$ in $\xi'$ respectively, $C^{\infty}$ in a conic neighborhood of $\rho'$ such that $\nabla\phi_1, \ldots, \nabla\phi_r$ are linearly independent at $\rho'$, $g\geq 0$, $\nabla^2g(\rho')=O$ and 
\begin{equation}
\label{eq:morse}
a(x, \xi')=\sum_{j=1}^{r}\phi_j^2(x, \xi')+g(x, \xi').
\end{equation}
With $\phi_0=\xi_0$ it is clear $Q(X, Y)=-\olr{\nabla\phi_0, X}\olr{\nabla\phi_0, Y}+\sum_{j=1}^r\olr{\nabla\phi_j, X}\olr{\nabla\phi_j, Y}$. Noting $\olr{\nabla\phi_j, X}=\sigma(X, H_{\phi_j})$ where $J\nabla \phi_j=H_{\phi_j}$, it follows that
\begin{gather*}
Q(X, Y)=\sigma(X, F_pY)
=\sigma\big(X, -\sigma(Y, H_{\phi_0})H_{\phi_0}+\sum_{j=1}^r\sigma(Y, H_{\phi_j})H_{\phi_j}\big)
\end{gather*}
and hence $F_pY=-\sigma(Y, H_{\phi_0})H_{\phi_0}+\sum_{j=1}^r\sigma(Y, H_{\phi_j})H_{\phi_j}$. In particular we see
\begin{equation}
\label{eq:F_p:zou}
{\rm Im}F_p={\rm span}\olr{H_{\phi_0}, H_{\phi_1},\ldots, H_{\phi_r}}.
\end{equation}
It is clear that 
\[
{\rm Ker}\,F_p=\{X\in V\mid \sigma(X, H_{\phi_j})=0, j=0,\ldots, r\}=({\rm Im}\,F_p)^{\sigma}.
\]
Note that if $F_pX_{\pm}=\pm\lam X_{\pm}$ with $\lam\neq 0$ then $X_{\pm}\in {\rm Im}\,F_p$ so that $X$ in the proof of Lemma \ref{lem:tokutyo:1} is a linear combination of $H_{\phi_j}$, $j=0, 1,\ldots, r$. Denote by $\Gamma$ the connected component of $\theta=-H_{x_0}=-J\nabla x_0$ in $\{X\in V\mid Q(X)\neq 0\}$ then 
\begin{equation}
\label{eq:hypsui}
\Gamma=\{X=(x, \xi)\mid \xi_0^2>\sum_{j=1}^r\olr{\nabla\phi_j(\rho), X}^2, \, \xi_0>0\}
\end{equation}
which is an open cone in $V$. In what follows for $X\in V$ we denote by $\olr{X}$ the subspace spanned by $X$ and $C=\{X\in V\mid \sigma(X, Y)\leq 0, Y\in \Gamma\}$ and $\Lambda={\rm Ker}\,F_p$.  
Here recall \cite[Corollary 1.4.7]{Ho1}:
\begin{lem}
\label{lem:tokutyo:1}If $F_p(\rho)$ has a nonzero real eigenvalue then  $\Gamma\cap \Lambda^{\sigma}\neq \{0\}$. 
\end{lem}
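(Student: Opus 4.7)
My plan is to exhibit an explicit nonzero vector in $\Gamma \cap \Lambda^\sigma$ by zooming in on the eigenspaces of $F_p$ associated with the given nonzero real eigenvalue. First I would record the identification $\Lambda^\sigma = \text{Im}\,F_p$, which follows at once from the already-noted identity $\Lambda = (\text{Im}\,F_p)^\sigma$ and the nondegeneracy of $\sigma$; so it suffices to find a nonzero element of $\Gamma \cap \text{Im}\,F_p$. Next I would invoke that $F_p$ is infinitesimally symplectic (a direct consequence of the symmetry of $Q$ via $\sigma(F_p X, Y) = Q(X,Y) = Q(Y,X) = -\sigma(X, F_p Y)$), so its spectrum is symmetric under $\mu \mapsto -\mu$: hence $-\lambda$ is also an eigenvalue, and I would work with the sum of real generalized eigenspaces $W = E_\lambda \oplus E_{-\lambda}$, noting that $W \subset \text{Im}\,F_p$ because $F_p|_{E_{\pm\lambda}}$ is invertible.

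The central step is to determine the signature of $Q|_W$. Three ingredients feed into this: (i) the general fact that for infinitesimally symplectic operators $\sigma(E_\mu, E_\nu) = 0$ whenever $\mu + \nu \neq 0$, which combined with the nondegeneracy of $\sigma$ on $V$ forces $\sigma$ to pair $E_\lambda$ with $E_{-\lambda}$ nondegenerately and hence $\sigma|_W$ to be nondegenerate; (ii) since $F_p|_W$ is invertible, the identity $Q(X,Y) = \sigma(X, F_p Y)$ then shows $Q|_W$ is also nondegenerate; and (iii) for actual eigenvectors $X_\pm \in E_{\pm\lambda}$ (which exist because $\pm\lambda$ are eigenvalues), antisymmetry of $\sigma$ forces $Q(X_\pm, X_\pm) = \pm\lambda\,\sigma(X_\pm, X_\pm) = 0$, ruling out positive-definiteness of $Q|_W$. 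Since the Morse decomposition \eqref{eq:morse} together with the $-\xi_0^2$ piece shows $Q$ has signature $(r,1)$ on $V$, every subspace carries at most one $Q$-negative direction, and combining with (i)--(iii) pins down the signature of $Q|_W$ as $(\dim W - 1,\,1)$.

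Having a $Q$-negative vector $Y \in W$, I finish using that $\{Q < 0\}$ is the disjoint union of the two convex cones $\Gamma$ and $-\Gamma$ (visible from the explicit formula \eqref{eq:hypsui}), so replacing $Y$ by $-Y$ if necessary yields the desired nonzero element of $\Gamma \cap W \subset \Gamma \cap \Lambda^\sigma$. The step I expect to require the most care is the nondegeneracy of $\sigma|_W$: although it is a standard fact about infinitesimally symplectic operators, one cannot simplify the argument by restricting to actual eigenvectors, since if the real Jordan blocks at $\pm\lambda$ have size greater than one the actual eigenspaces may pair trivially under $\sigma$, and generalized eigenvectors genuinely enter. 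Everything else is routine linear algebra dictated by the Lorentzian character of $Q$.
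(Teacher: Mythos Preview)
Your argument is correct and follows essentially the same skeleton as the paper's proof: identify $\Lambda^\sigma=\mathrm{Im}\,F_p$, use the spectral symmetry $\lambda\mapsto-\lambda$ to obtain eigenvectors $X_\pm$, produce a $Q$-negative vector in their span, and conclude via $\{Q<0\}=\Gamma\cup(-\Gamma)$. The one genuine difference is in how nondegeneracy is secured. You pass to the full generalized eigenspaces $W=E_\lambda\oplus E_{-\lambda}$ and invoke the general fact that $\sigma$ pairs $E_\lambda$ with $E_{-\lambda}$ nondegenerately, because you worry that for actual eigenvectors one might have $\sigma(X_+,X_-)=0$ when Jordan blocks are nontrivial. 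The paper instead works directly with actual eigenvectors and observes that $\sigma(X_+,X_-)=0$ would force $Q$ to vanish on the $2$-dimensional span of $X_+,X_-$ (since $Q(X_\pm,X_\pm)=\pm\lambda\,\sigma(X_\pm,X_\pm)=0$ and $Q(X_+,X_-)=-\lambda\,\sigma(X_+,X_-)$), which is impossible for a form of Lorentz signature $(r,1)$ whose maximal totally isotropic subspaces are one-dimensional. So your caution, while well founded for a general infinitesimally symplectic map, is unnecessary here: the Lorentzian character of $Q$ already rules out the degenerate pairing of actual eigenvectors, and this lets the paper bypass the generalized-eigenspace machinery entirely. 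Your route is more robust (it would survive weaker signature hypotheses), while the paper's is shorter and more self-contained.
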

\begin{proof} Let $\lam\neq 0$ be a real eigenvalue. Show that $-\lam$ is also an eigenvalue of $F_p$. Let $F_pX=\lam X$, $X\neq 0$. Then from $0=\sigma((F_p-\lam)X, Y)=\sigma(X, (-F_p-\lam)Y)$, $Y\in V$ we see that $F_p+\lam$ is not surjective proving that $-\lam$ is also an eigenvalue. Let $F_pX_{\pm}=\pm \lam X_{\pm}$, $X_{\pm}\neq 0$ then  $X_{\pm}\in {\rm Im}F_p=\Lambda^{\sigma}$. Note that the signature of $Q$ is $(r, 1)$ with $r\geq 1$ otherwise $Q(X)$ would be $-\xi_0^2$ and hence $F_p$ has no nonzero eigenvalues. Write $V=V_0\oplus {\rm Ker}F_p$ (direct sum) and consider $Q$ on $V_0$. Since $Q$ is nondegenerate on $V_0$ then $Q$ is of Lorenz signature. We may assume $X_{\pm}\in V_0$. If $\sigma(X_{+}, X_{-})=0$ then, since $\sigma$ is anti-symmetric, $Q$ vanishes on the  $2$ dimensional subspace in $V_0$ spanned by $X_{+}$ and $X_{-}$   which is a contradiction. Thus $\sigma(X_{+}, X_{-})\neq 0$. With $X=\al X_{+}+\be X_{-}\in \Lambda^{\sigma}$ we have
\[
Q(X)=\sigma(\al X_{+}+\be X_{-}, \lam \al X_{+}-\lam \be X_{-})=-2\al\be\lam \sigma(X_{+}, X_{-}).
\]
Then choosing $\al$, $\be$ such that $\al\be\lam \sigma(X_{+}, X_{-})>0$ we conclude either $X$ or $-X$ is in $\Gamma$ .
\end{proof}
\begin{lem}
\label{lem:tokutyo:2}The following three conditions are equivalent;
\begin{description}
\item[\rm(i)] $\Gamma\cap \Lambda^{\sigma}\neq \{0\}$,
\item[\rm(ii)] there is a subspace $H\subset V$ of codimension $1$ such that $
H\cap C=\{ 0\}$ and $\Lambda+\olr{\theta}\subset H$,
\item[\rm(iii)] $\Gamma\cap \Lambda^{\sigma}\cap \olr{\theta}^{\sigma}\neq \{0\}$.
\end{description}
\end{lem}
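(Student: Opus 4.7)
The plan is to establish the equivalences as $(iii) \Rightarrow (i)$, $(ii) \iff (iii)$, and $(i) \Rightarrow (iii)$. The first is immediate from the inclusion $\Gamma \cap \Lambda^{\sigma} \cap \langle\theta\rangle^{\sigma} \subset \Gamma \cap \Lambda^{\sigma}$.

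For $(ii) \iff (iii)$, every codimension-one subspace of $V$ has the form $H_Z = \{X \in V : \sigma(X,Z) = 0\}$ for some nonzero $Z$, unique up to scalar. The containment $\Lambda + \langle\theta\rangle \subset H_Z$ is equivalent to $Z \in (\Lambda + \langle\theta\rangle)^{\sigma} = \Lambda^{\sigma} \cap \langle\theta\rangle^{\sigma}$. The condition $H_Z \cap C = \{0\}$ says that $\sigma(\cdot,Z)$ has no zero on $C \setminus \{0\}$; by convexity of $C$ it is then of constant sign there. Since $\Gamma$ is open, $C$ is salient, and the $\sigma$-bipolar theorem gives $\{Z : \sigma(X,Z) \leq 0, \ \forall X \in C\} = \overline{\Gamma}$, so the set of $Z$ making $\sigma(\cdot,Z)$ strictly one-signed on $C \setminus \{0\}$ is the open interior $\Gamma \cup (-\Gamma)$. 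Since $H_Z = H_{-Z}$, condition $(ii)$ becomes existence of a nonzero $Z \in \Gamma \cap \Lambda^{\sigma} \cap \langle\theta\rangle^{\sigma}$, which is $(iii)$.

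The substantive step is $(i) \Rightarrow (iii)$. Since $\phi_j$ is independent of $\xi_0$ for $j \ge 1$, one computes $\sigma(\theta,H_{\phi_0}) = 1$ and $\sigma(\theta,H_{\phi_j}) = \partial_{\xi_0}\phi_j = 0$ for $j \ge 1$, so that
\[
\Lambda^{\sigma} \cap \langle\theta\rangle^{\sigma} = W := \mathrm{span}\{H_{\phi_1},\ldots,H_{\phi_r}\}.
\]
Using the Morse form $Q(X) = -2\langle\nabla\phi_0,X\rangle^2 + 2\sum_{j \ge 1}\langle\nabla\phi_j,X\rangle^2$ together with $\langle\nabla\phi_0,X\rangle = \xi_0(X)$, condition $(i)$ (resp.\ $(iii)$) asks for some $X \in \Lambda^{\sigma}$ (resp.\ $X \in W$) with $\xi_0(X) > 0$ and $Q(X) < 0$. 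Decomposing $X = c_0 H_{\phi_0} + X'$ with $X' \in W$, one observes that $\xi_0(X) = \xi_0(X')$ is independent of $c_0$, whereas $\langle\nabla\phi_j,X\rangle = c_0\, a_{0j} + \langle\nabla\phi_j,X'\rangle$ with $a_{0j} = \partial_{x_0}\phi_j(\rho)$. Minimizing $Q(X)$ over the free parameter $c_0$ in closed form and substituting the optimum eliminates $c_0$, reducing $(i)$ to a quadratic inequality on $W$ alone; a direct cancellation (in the case $r = 2$ taking the schematic form $L^2 + B^2/\|a\|^2 - C = (1 - A_{12}^2/\|a\|^2)L^2$) then identifies this reduced inequality with the one characterizing $(iii)$.

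The main obstacle will be carrying out the Schur-complement computation for general $r$ and handling the potential degeneracy of $Q|_{\Lambda^{\sigma}}$, which occurs precisely when $\Lambda \cap \Lambda^{\sigma} \ne \{0\}$, i.e., when $F_p$ has a Jordan block of size $\ge 2$ at eigenvalue $0$. In the extreme degenerate case $a_{0j} \equiv 0$ a direct inspection gives $\Lambda^{\sigma} \subset \{\xi_0 = 0\}$, so both $(i)$ and $(iii)$ fail simultaneously and the equivalence is preserved.
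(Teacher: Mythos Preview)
Your $(ii)\Leftrightarrow(iii)$ argument is essentially the paper's Hahn--Banach step for $(ii)\Rightarrow(iii)$, together with the easy converse; it is correct once you observe that $C$ is a pointed closed convex cone so that $C\setminus\{0\}$ is connected and the sign of $\sigma(\cdot,Z)$ is constant there, and that for $Z\in\partial\Gamma$ a supporting hyperplane produces a nonzero $X\in C$ with $\sigma(X,Z)=0$. Your identification $\Lambda^{\sigma}\cap\olr{\theta}^{\sigma}=W=\mathrm{span}\{H_{\phi_1},\dots,H_{\phi_r}\}$ and the degenerate case $a_{0j}\equiv 0$ are also correct.

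The genuine gap is $(i)\Rightarrow(iii)$ for general $r$: you yourself flag the Schur step as the ``main obstacle'' and only check $r\le 2$. In your coordinates the question is whether $B'=-aa^{T}-A'^{2}$ positive semidefinite forces $B'-(A'a)(A'a)^{T}/|a|^{2}$ positive semidefinite ($A'$ the $r\times r$ antisymmetric block, $a=(\partial_{x_0}\phi_j(\rho))_{j\ge 1}$). This is true, and here is the missing idea: if $B'\ge 0$ then for $c'\in\ker A'$ one gets $(a\cdot c')^{2}\le 0$, so $a\perp\ker A'$, hence $a=A'b$ for some $b$; antisymmetry gives $a\cdot b=0$. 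Substituting $c'\mapsto c'+tb$ leaves $a\cdot c'$ unchanged while $A'c'\mapsto A'c'+ta$; minimizing $|A'c'+ta|^{2}-(a\cdot c')^{2}\ge 0$ over $t$ yields exactly $|A'c'|^{2}-(a\cdot c')^{2}-(a\cdot A'c')^{2}/|a|^{2}\ge 0$, which is the desired Schur complement inequality. So your route can be completed, but not by a bare ``direct cancellation''.

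The paper avoids this computation entirely: it proves $(i)\Rightarrow(ii)$ by a geometric case split on whether $\theta\in\Lambda+\Lambda^{\sigma}$. If so, writing $\theta=X_1+X_2$ with $X_1\in\Lambda$, $X_2\in\Lambda^{\sigma}$, one gets $X_2=\theta-X_1\in\Gamma$ (since $\Gamma+\Lambda\subset\Gamma$) and $H=\olr{X_2}^{\sigma}$ works. If not, one takes $Z\in\Gamma\cap\Lambda^{\sigma}$ and extends the hyperplane $\olr{Z}^{\sigma}\cap(\Lambda+\Lambda^{\sigma})$ by adjoining $\theta$ and a complementary subspace. This argument uses no Morse coordinates and no matrix computation; your approach, once completed, is more explicit but relies on the concrete basis $H_{\phi_j}$.
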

\begin{proof}${\rm(i)}\Longrightarrow{\rm(ii)}$. First assume $\theta\in \Lambda+\Lambda^{\sigma}$ so  that $\theta=X_1+X_2$ with $X_1\in \Lambda$ and $X_2\in \Lambda^{\sigma}$. Then $0\neq X_2\in \Gamma$ since $\Gamma\cap \Lambda=\emptyset$ and $\Gamma+\Lambda\subset \Gamma$. It is clear that  $\theta\in \olr{X_2}^{\sigma}$ and $\Lambda\subset \olr{X_2}^{\sigma}$. Suppose $\olr{X_2}^{\sigma}\cap C$ contains some $X\neq 0$. Since $\Gamma$ is open then $X_2+Y\in \Gamma$ if $|Y|$ is small hence $\sigma(X_2+Y, X)=\sigma(Y,X)\leq 0$ for $X\in C$ which is a contradiction. Thus $H=\olr{X_2}^{\sigma}$ is a desired subspace. Next consider the case $\theta\not\in \Lambda+\Lambda^{\sigma}$ and hence $(\Lambda+\Lambda^{\sigma})\cap\olr{\theta}=\{0\}$. Take $0\neq Z\in \Gamma\cap\Lambda^{\sigma}$ then recalling $\Gamma$ is open we have
\begin{equation}
\label{eq:ni:1}
\Lambda\subset\olr{Z}^{\sigma},\quad \olr{Z}^{\sigma}\cap C=\{0\}. 
\end{equation}
Thus denoting $T=\olr{Z}^{\sigma}\cap(\Lambda+\Lambda^{\sigma})$ we see
\begin{equation}
\label{eq:ni:2}
\Lambda\subset T,\quad T\cap C=\{0\}.
\end{equation}
Noting that $C\subset \Lambda^{\sigma}$ for $\Gamma+\Lambda\subset \Gamma$ it follows from \eqref{eq:ni:1} that $\Lambda+\Lambda^{\sigma}\not\subset \olr{Z}^{\sigma}$. This proves that ${\rm dim}\,T={\rm dim}(\Lambda+\Lambda^{\sigma})-1$. Write $V=(\Lambda+\Lambda^{\sigma})\oplus W_1$ and $\theta=Y_1+Y_2$ with $Y_1\in \Lambda+\Lambda^{\sigma}$ and $0\neq Y_2\in W_1$ and $W_1=\olr{Y_2}\oplus W_2$. 
Then $H=T+\olr{\theta}+W_2$ is of codimension $1$. From \eqref{eq:ni:2} and $C\subset \Lambda^{\sigma}$ we see $H\cap C=\{0\}$ and hence $H$ is a desired subspace.

\noindent
${\rm(ii)}\Longrightarrow{\rm(iii)}$. Choose $0\neq Y\in V$ such that $\olr{Y}=H^{\sigma}$ then $\olr{Y}\subset \Lambda^{\sigma}\cap \olr{\theta}^{\sigma}$. Show that $Y$ or $-Y$ belongs to $\Gamma$. If not we would have $\olr{Y}\cap \Gamma=\emptyset$. Then by the Hahn-Banach theorem there is $0\neq Z\in V$ such that $\sigma(Z, X)\leq 0, \forall X\in \Gamma$ and $\sigma(Z, X)\geq 0, \forall X\in\olr{Y}$. This shows that $Z\in C$ and $Z\in \olr{Y}^{\sigma}=H$ which is a contradictin. 

\noindent
${\rm(iii)}\Longrightarrow {\rm (i)}$ is trivial.
\end{proof}
%

%%%%%%%%%%%%%%%%%%%%%%%%%%
\subsection{Proof of Proposition \ref{pro:p:to:psi}}

In this subsection we return to the original notation and write $t$ for $x_0$, $x=(x_1,\ldots, x_d)$ and $\tau$ for $\xi_0$, $\xi=(\xi_1,\ldots, \xi_d)$.
After a suitable linear change of local coordinates $x$ we may assume that ${\bar \xi}=(0, \ldots, 0, 1)=e_d$. We write $\rho=(0, 0, e_d)\in \R^{d+1}\times\R^d$ and $\rho'=(0, e_d)\in \R^d\times\R^d$. Thanks to Lemma \ref{lem:tokutyo:2} one can take  $0\neq X\in \Gamma\cap \Lambda^{\sigma}\cap \olr{\theta}^{\sigma}$.  From $X\in \Lambda^{\sigma}$, in view of \eqref{eq:F_p:zou}, $X$ is a linear combination of $H_{\phi_j}(\rho)$ such that $
X=\sum_{j=1}^r\al_jH_{\phi_j}(\rho)+\al_0H_{\phi_0}(\rho)$. 
Since $X\in \olr{\theta}^{\sigma}$ we have $\al_0=0$. We set
\[
f(t, x, \xi)=\sum_{j=1}^r\al_j\phi_j(t, x, \xi)/|\xi|.
\]
Since $H_{f}(\rho)=X\in \Gamma$, noting \eqref{eq:hypsui}, it is clear that $\dif f/\dif t< 0$ at $\rho$ therefore one can write $
f(t, x, \xi)=e(t, x, \xi)(t-\psi(x, \xi))$ 
where $e(\rho)< 0$. It follows from \eqref{eq:morse} 
\begin{equation}
\label{eq:a:no:sita}
a(t, x, \xi)\geq c\,(t-\psi(x,\xi))^2|\xi|^2
\end{equation}
with some $c>0$. Since $-H_{t-\psi}(\rho)\in \Gamma$ we see from \eqref{eq:hypsui} that
\begin{gather*}
1>\sum_{j=1}^r\olr{\nabla\phi_j(\rho), H_{t-\psi}(\rho)}^2=\sum_{j=1}^r\olr{\nabla\phi_j(\rho), J\nabla(t-\psi)(\rho)}^2
=\sum_{j=1}^r\{\phi_j, \psi\}^2(\rho)
\end{gather*}
from which, taking \eqref{eq:morse} and $\nabla^2g(\rho)=O$ into account, we conclude that
\begin{equation}
\label{eq:toku:a}
\big|\{\psi,\{\psi, a\}\}(\rho)\big|<2.
\end{equation}
The next lemma is well known.
\begin{lem}
\label{lem:a:to:psi}Assume $d\psi\neq 0$ and not proportional to $dx_d$ at $\rho'$. Then one can find a system of local coordinates $x=(x_1,\ldots, x_d)$ such that either $d\psi=d\xi_1$ or $d\psi=dx_1+cdx_d$ with some $c\in\R$ at $\rho'$.
\end{lem}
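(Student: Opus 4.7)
The plan is to compute how $d\psi$ at $\rho'=(0,e_d)$ transforms under the cotangent lift of a diffeomorphism $\kappa$ of the base $x$-space fixing the origin, and to pick $\kappa$ so that $d\psi$ takes one of the two claimed normal forms at $\rho'$. Write
\[
d\psi(\rho')=\sum_{j=1}^{d}a_j\,dx_j+\sum_{j=1}^{d}b_j\,d\xi_j.
\]
Since $\psi$ is homogeneous of degree $0$ in $\xi$ by its construction in the previous subsection, Euler's identity forces $b_d=0$. The two target normal forms then correspond to the two cases $b\neq 0$ and $b=0$.

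Suppose first that $b\neq 0$; the goal is to reach $d\tilde\psi=d\eta_1$. I would proceed in two sub-steps. First apply a linear change $y=Mx$, $\eta=M^{-T}\xi$, subject to $M^Te_d=e_d$ (the last row of $M$ being $e_d^{T}$, so that $\bar\eta=e_d$ and $\rho'$ is preserved). Under this change one has $\partial_\eta\tilde\psi=Mb$ and $\partial_y\tilde\psi=M^{-T}a$ at $\rho'$. Since $b_d=0$ and $b\neq 0$, one can choose $M$ with $Mb=e_1$: take the first row of $M$ so that its inner product with $b$ is $1$, and rows $2,\dots,d-1$ orthogonal to $b$ and independent of each other and of $e_d$. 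This reduces $d\tilde\psi$ to $d\eta_1+\sum_j a'_j\,dy_j$. The remaining $dy$-terms are then killed by a further \emph{nonlinear} change $\kappa_2$ whose linear part at the origin is the identity and whose only non-trivial quadratic jet lives in the $d$-th component: $\kappa_{2,d}(x)=x_d+\tfrac12 H_{jk}x_jx_k$ with $H=(H_{jk})$ symmetric. A direct chain-rule computation, in which $\bar\eta=e_d$ picks out only the $d$-th component of the relevant sum, gives $\partial\xi_k/\partial y_j|_{\rho'}=H_{jk}$; hence $\partial_\eta\tilde\psi$ remains $e_1$ while $\partial_y\tilde\psi$ gets translated by $He_1$, the first column of $H$. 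Choosing $H$ symmetric with first column $-a'$ and the other entries zero finishes this case.

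Now suppose $b=0$, so the target is $d\tilde\psi=dy_1+c\,dy_d$. Then $d\psi=\sum a_j\,dx_j$ with $a$ not a multiple of $e_d$ by hypothesis, so some $a_j\neq 0$ with $j<d$; after permuting $\{x_1,\dots,x_{d-1}\}$ we may assume $a_1\neq 0$. A single linear change suffices: take the $d$-th row of $M$ to be $e_d^{T}$, the first row $(a_1,\dots,a_{d-1},0)$ and the intermediate rows $e_2,\dots,e_{d-1}$. Then $\det M=a_1\neq 0$, and solving $M^Tu=a$ gives $u=e_1+a_de_d$, so that $\partial_y\tilde\psi=e_1+a_de_d$, which is the desired normal form with $c=a_d$.

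The main obstacle is the first case: a purely linear $x$-change cannot absorb the $dx$-component of $d\psi$ (it only acts by $M^{-T}$ on $a$), so one has to exploit the second-order jet of $\kappa$, which via the chain rule acts on $\partial_y\tilde\psi$ by an \emph{arbitrary symmetric} shift along $\partial_\eta\tilde\psi$. Once $\partial_\eta\tilde\psi$ has been normalised to $e_1$ by the linear step, this symmetric freedom is precisely what is needed to eliminate the remaining $dy$-terms, yielding the claimed normal form $d\psi=d\xi_1$.
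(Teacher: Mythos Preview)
Your proof is correct and follows essentially the same strategy as the paper: split according to whether the $d\xi$-part of $d\psi(\rho')$ vanishes, handle the pure $dx$ case by a linear change of $x$, and in the mixed case combine a linear change with a quadratic perturbation of the $x_d$-coordinate, exploiting that $\bar\xi=e_d$ so that the second-order jet of $\kappa_d$ shears $d\xi$-terms into $dx$-terms. The only difference is packaging: the paper first renumbers to reach $\xi_1+\cdots+\xi_k$, then performs two successive quadratic substitutions $x_d\mapsto x_d-\sum_{j\le k}b_jx_j^2/2$ and $x_d\mapsto x_d-x_1\sum_{j>k}b_jx_j$ before a final linear change of $(x_1,\dots,x_k)$, whereas you normalise the $d\xi$-part to $d\eta_1$ first by one linear map and then kill all $dy$-terms with a single symmetric Hessian $H$; your version is a bit more streamlined but the mechanism is identical.
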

\begin{proof}Since $\dif_{\xi_d}\psi(\rho')=0$ by the Euler's identity one can write $\psi(x, \xi)=\langle{a', \xi'}\rangle+\langle{b', x'}\rangle+b_dx_d+r(x, \xi)$ where $\xi'=(\xi_1, \ldots, x_{d-1})$ and $r$ vanishes at $\rho'$ of order $2$.
Consider the following change of local coordinates $x$. If $a'=0$ hence $b'\neq 0$  the assertion follows by a linear change of coordinates $x'$. If $a'\neq 0$ one can assume $\langle{a', \xi'}\rangle=\xi_1+\cdots+\xi_k$ renumbering $x_j$, $1\leq j\leq d-1$. Replacing the coordinate $x_d$ by $x_d-\sum_{j=1}^kb_jx_j^2/2$ we can assume $\langle{b, x}\rangle=\sum_{j=k+1}^db_jx_j$. Replacing again the coordinate $x_d$ by $x_d-x_1\sum_{j=k+1}^d b_jx_j$ we can assume $b=0$. Then after a linear change of coordinates $(x_1,\ldots, x_k)$ the assertion is clear.
\end{proof}
In Lemma \ref{lem:a:to:psi} we used coordinates change such that $y=x+q(x)$ where $q(x)$ is a quadratic form in $x$. If we cut $q(x)$ off outside a neighborhood of $x=0$  it is clear that the resulting change of coordinates satisfies the requirements in Proposition \ref{pro:p:to:psi}.    
%%%%%%%

\medskip
%%%%%%%
\noindent
Proof of Proposition \ref{pro:p:to:psi}:  If $d\psi=0$ or proportional to $dx_d$ at $\rho'$ it suffices to take $\ell=0$ and $q=a$ because $\dif_{\xi_d}^2a(\rho)=0$ by the Euler's identity. Assume $d\psi(\rho')\neq 0$ and not proportional to $dx_d$. Thanks to Lemma \ref{lem:a:to:psi} we may assume $d\psi=d\xi_1$ or $d\psi=dx_1+cdx_d$. Assume $d\psi=d\xi_1$ at $\rho'$. If $\dif_{x_1}^2a(\rho)=0$ it suffices to take $\ell=0$ and $b=a$. Otherwise thanks to the Malgrange preparation theorem one can write 
\[
a(t, x, \xi)=e(t, x, \xi)\big((x_1-h(t, x', \xi))^2+g(t, x', \xi)\big),\quad x'=(x_2,\ldots, x_d)
\]
 where $e>0$ and $h, g$ are  of homogeneous of degree $0$ vanishing at  
 $\rho$. Choose 
 \[
 \ell(t, x, \xi)=e^{1/2}(t, x, \xi)(x_1-h(t, x', \xi)),\;\;q(t, x, \xi)=e(t, x, \xi)g(t, x', \xi)
 \]
and set $\psi_1(t, x',\xi)=\psi(h(t, x', \xi), x', \xi)$ then $d\psi_1=d\psi$ at $\rho'$. From \eqref{eq:a:no:sita} it follows that
\[
q(t, x, \xi)\geq c(t-\psi_1(t, x',\xi))^2|\xi|^2
\]
with some $c>0$. Since $\dif \psi_1/\dif t=0$ at $\rho'$ one can write
\[
t-\psi_1(t, x',\xi)=e'(t, x', \xi)(t-\psi_2(x', \xi)).
\]
Since $d\psi_2=d\psi_1$ at $\rho'$ then $\{{\psi_2},\{{\psi_2}, q\}\}(\rho)=0$ hence it follows from \eqref{eq:toku:a} that $\{\ell, \psi_2\}^2(\rho)<1$. Thus $\psi_2$ is a desired one.  When $d\psi=dx_1+cdx_d$ the proof is similar.
\qed
%

%%%%%%%%%%%%%%%%%%%%%%%%%%%

\end{document}